\let\oldtocsection=\tocsection
\let\oldtocsubsection=\tocsubsection
\let\oldtocsubsubsection=\tocsubsubsection
\renewcommand{\tocsection}[2]{\hspace{0em}\oldtocsection{#1}{#2}}
\renewcommand{\tocsubsection}[2]{\hspace{1em}\oldtocsubsection{#1}{#2}}
\renewcommand{\tocsubsubsection}[2]{\hspace{2em}\oldtocsubsubsection{#1}{#2}}
\def\Xint#1{\mathchoice
 {\XXint\displaystyle\textstyle{#1}}
 {\XXint\textstyle\scriptstyle{#1}}
 {\XXint\scriptstyle\scriptscriptstyle{#1}}
 {\XXint\scriptscriptstyle\scriptscriptstyle{#1}}
 \!\int}
\def\XXint#1#2#3{{\setbox0=\hbox{$#1{#2#3}{\int}$}
 \vcenter{\hbox{$#2#3$}}\kern-.5\wd0}}
\newtheorem{theorem}{Theorem}[section]
\newtheorem{lemma}[theorem]{Lemma}
\newtheorem{corollary}[theorem]{Corollary}
\newtheorem{definition}[theorem]{Definition}
\theoremstyle{definition}
\newtheorem{remark}[theorem]{Remark}
\numberwithin{equation}{section}
\begin{document}

\date{\today}
\subjclass[2020]{34G10, 34G20, 35J10, 35L05, 35L90, 35Q40, 47B12}
\keywords{abstract Schr\"odinger equation; abstract wave equation; sectorial operators; strip-type operators; parabola-type operators}
\thanks{The research project is implemented in the framework of H.F.R.I call ``Basic research Financing (Horizontal support of all Sciences)'' under the National Recovery and Resilience Plan ``Greece 2.0" funded by the European Union - NextGenerationEU (H.F.R.I. Project Number: 14758).}

\title[Strip-type operators and abstract Cauchy problems]{Strip-type operators and abstract Cauchy problems}

\author[N. Roidos]{\textsc{Nikolaos Roidos}}
\address{Nikolaos Roidos\newline
Department of Mathematics\newline
University of Patras\newline
26504 Rio Patras, Greece\newline
{\sl E-mail address:} {\bf roidos@math.upatras.gr}}

\begin{abstract}
We consider the non-homogeneous abstract linear Schr\"odinger and wave equations with zero initial conditions, defined by operators of strip-type and parabola-type in Banach spaces, respectively, and establish the well-posedness of classical solutions in appropriate vector-valued Sobolev-Slobodetskii spaces. We obtain analogous results for two extensions of these equations by replacing the previously mentioned boundedness properties of the associated operators with $R$-boundedness. As an application, we consider an abstract semilinear wave equation and establish the existence and uniqueness of classical solutions to this problem for short times.
\end{abstract}

\maketitle

\section{Introduction}

Let $p\in (1,\infty)$, $T>0$, $X$ be a complex Banach space, $f\in L^{p}(0,T;X)$, and $A:D(A)\rightarrow X$ a closed linear operator. In this article, we consider the problems
\begin{equation}\label{ACLSE}
iu'(t)\mp Au(t)=f(t), \quad t\in (0,T), \quad u(0)=0,
\end{equation}
and
\begin{equation}\label{ACLWE}
u''(t)+A^{2}u(t)=f(t), \quad t\in (0,T), \quad u(0)=u'(0)=0,
\end{equation}
where $(\cdot)'=\partial_{t}$ and the square of $A$ is defined in the usual sense; see Section \ref{Sec2}. In \eqref{ACLSE}, we have extended $A$ to an operator from $L^{p}(0,T;D(A))$ to $L^{p}(0,T;X)$ by $v\mapsto Av(t)$ for almost all $t\in (0,T)$; a similar extension is done for \eqref{ACLWE}. The linear homogeneous analogues of \eqref{ACLSE} and \eqref{ACLWE} are treated using the theory of cosine operator functions and their associated generators. For a comprehensive exposition of this theory, see \cite{ABHN} and the references therein; see also \cite{Fatt} and \cite{VaPi}.

We regard \eqref{ACLSE} and \eqref{ACLWE} as equations involving the sum of two operators, namely 
$$
(\pm iA+B)u=-if \quad \text{and} \quad (A^{2}+B^{2})u=f,
$$
respectively, where $B$ denotes the derivation operator with Dirichlet boundary condition at $t=0$; see Section \ref{Sec3}. In Section \ref{Sec4} we prove well-posedness for \eqref{ACLSE} and \eqref{ACLWE} when $f$ belongs to appropriate vector-valued Sobolev-Slobodetskii spaces; see Theorem \ref{aseth} and Theorem \ref{invA2B2}, respectively. Our proofs are based on functional calculus techniques for sectorial operators and make use of the Da Prato-Grisvard formula for the inverse of the sum of two operators \cite[(3.11)]{DG}. The main assumption for solving \eqref{ACLSE} is that the associated operator $A$ is sectorial, its spectrum lies within a horizontal strip around the real axis, and its resolvent family is either bounded or decays outside this strip. On the other hand, for \eqref{ACLWE}, the associated operator $A^{2}$ is also sectorial, its spectrum lies on the right-hand side of a horizontal parabola opening to the right, and its resolvent family satisfies specific boundedness conditions on the left-hand side of this parabola. 

In Section \ref{Sec5}, we consider two extensions of \eqref{ACLSE} and \eqref{ACLWE}, defined via the closures of the operators $\pm iA+B$, namely \eqref{stabase} and \eqref{stabawe}, respectively. In Theorem \ref{AWEWP} and Theorem \ref{BestScrth}, we restrict to UMD spaces and establish well-posedness of these equations by constructing the inverses of the above closures and of their product in the appropriate spaces. The forcing term $f$ is now allowed to lie in a space that contains the space of solutions. In our proofs, we use the properties of the vector valued Laplace transform and a Mihlin-type Fourier multiplier theorem for operator-valued multiplier functions; see \cite{ABHN} and \cite{Weis}, respectively. The main assumption is now that the associated operator $A$ is sectorial, its spectrum lies within a horizontal strip around the real axis, and its resolvent family is $R$-bounded outside this strip. 

Based on the results of Section \ref{Sec5}, in Section \ref{Sec6} we consider an abstract semilinear wave equation by replacing $f$ in \eqref{ACLWE} with a polynomial in $u$ with vector-valued coefficients; see \eqref{NLW1}-\eqref{NLW2}. Under certain assumptions, using a Banach fixed-point argument, we establish the existence and uniqueness of classical solutions to this problem for short times; see Theorem \ref{ThNLW}. The assumption on the associated operator $A^{2}$ is that it is sectorial, its spectrum lies on the right-hand side of a horizontal parabola opening to the right, and its resolvent family satisfies specific $R$-boundedness conditions on the left-hand side of this parabola. 

As an application, in Section \ref{Sec7} we consider the Klein-Gordon equation on $\mathbb{R}$ and establish existence, uniqueness, and regularity results for its solutions.

 {\em Notation}: Denote $\mathbb{N}_{0}=\mathbb{N}\cup\{0\}$, and let $\mathrm{Re}(\cdot)$, $\mathrm{Im}(\cdot)$, and $\mathrm{arg}(\cdot)$ be the real part, the imaginary part, and the argument, respectively, of a complex number. If $X$ is a Banach space, we denote its norm by $\|\cdot\|_{X}$. Moreover, $\hookrightarrow$ denotes continuously and densely embedding between Banach spaces, and $\mathcal{L}(\cdot,\cdot)$ or $\mathcal{L}(\cdot)$ denotes the space of bounded linear operators acting between Banach spaces. If $\theta\in [0,1]$ and $p\in (1,\infty)$, let $(\cdot,\cdot)_{\theta,p}$ and $[\cdot,\cdot]_{\theta}$ denote the real and the complex interpolation space, respectively, between Banach spaces. Furthermore, $\mathcal{D}(\cdot)$ and $\rho(\cdot)$ denote, respectively, the domain (endowed with the graph norm) and the resolvent set of a linear operator defined in a Banach space. Additionally, if $p\in(1,\infty)\cup\{\infty\}$, $k\in\mathbb{N}_{0}$, and $s\geq0$, let $W^{k,p}$ be the Sobolev space of order $k$, $H_{p}^{s}$ the Bessel potential space of order $s$, and $C^{k}$ the space of $k$-times continuously differentiable functions. Finally, denote by $\Xint-$ the Cauchy principal value. 

\section{sectorial and strip-type operators}\label{Sec2}

Let $X$ be a complex Banach space, and let $A_{1}$, $A_{2}$ be two linear operators in $X$ with domains $\mathcal{D}(A_{1})$ and $\mathcal{D}(A_{2})$, respectively. We define the sum $A_{1}+A_{2}$ and the product $A_{1}A_{2}$ in $X$ as the operators
$$
\mathcal{D}(A_{1})\cap \mathcal{D}(A_{2}) \ni u \mapsto (A_{1}+A_{2})u=A_{1}u+A_{2}u \in X
$$
and
$$
\{v\in \mathcal{D}(A_{2})\, |\, A_{2}v\in \mathcal{D}(A_{1})\}\ni u\mapsto (A_{1}A_{2})u=A_{1}(A_{2}u) \in X.
$$
In particular, for any $k\in \mathbb{N}$ the $(k+1)$-power of $A_{1}$ is defined inductively by 
$$
\mathcal{D}(A_{1}^{k+1})=\{x\in \mathcal{D}(A_{1}^{k}) \, |\, A_{1}^{k}x\in \mathcal{D}(A_{1})\} \quad \text{and} \quad x\mapsto A_{1}^{k+1}x=A_{1}(A_{1}^{k}x).
$$
Recall that, in a Banach space, every linear operator with non empty resolvent set is closed. If $\rho(A_{1}), \rho(A_{2})\neq\emptyset$ and $A_{1}$, $A_{2}$ are resolvent commuting (see, e.g., \cite[(III.4.9.1)]{Amann1} for the notion of two resolvent commuting operators), then $A_{1}+A_{2}$ is closable. Indeed, let $u_{k}\in \mathcal{D}(A_{1})\cap \mathcal{D}(A_{2})$, $k\in\mathbb{N}$, such that $u_{k}\rightarrow 0$ and $(A_{1}+A_{2})u_{k}\rightarrow w$ as $k\rightarrow\infty$. If $\lambda$ in $\rho(A_{1})$, then $(A_{1}+\lambda)^{-1}u_{k}\rightarrow 0$ and $A_{1}(A_{1}+\lambda)^{-1}u_{k}+A_{2}(A_{1}+\lambda)^{-1}u_{k}\rightarrow (A_{1}+\lambda)^{-1}w$ as $k\rightarrow\infty$, i.e. $A_{2}(A_{1}+\lambda)^{-1}u_{k}\rightarrow (A_{1}+\lambda)^{-1}w$ as $k\rightarrow\infty$. Hence, by the closedness of $A_{2}$, we get $(A_{1}+\lambda)^{-1}w=0$, i.e. $w=0$. 

We now continue with the basic notion of a sectorial operator.

\begin{definition}[sectorial operators]
Let $K\geq1$, $\phi\in[0,\pi)$, and let $X$ be a complex Banach space. Let $\mathcal{P}(K,\phi)$ be the class of all closed densely defined linear operators $A$ in $X$ such that 
$$
S_{\phi}=\{\lambda\in\mathbb{C}\,|\, |\arg(\lambda)|\leq\phi\}\cup\{0\}\subset\rho{(-A)} \quad \mbox{and} \quad (1+|\lambda|)\|(A+\lambda)^{-1}\|_{\mathcal{L}(X)}\leq K, \quad \lambda\in S_{\phi}.
$$
The elements in $\mathcal{P}(\phi)=\cup_{K\geq1}\mathcal{P}(K,\phi)$ are called {\em (invertible) sectorial operators of angle $\phi$}. 
\end{definition}

If $A\in\mathcal{P}(\phi)$ for some $\phi\in[0,\pi)$, then there exist $r>0$ and $\varphi\in(\phi,\pi)$ such that
\begin{equation}\label{extsect}
\Omega_{r,\varphi}=\{\lambda\in \mathbb{C}\, |\, |\lambda|\leq r\}\cup S_{\varphi} \subset \rho(-A) \quad \text{and} \quad (1+|\lambda|)\|(A+\lambda)^{-1}\|_{\mathcal{L}(X)}\leq C, \quad \lambda\in \Omega_{r,\varphi},
\end{equation}
for certain $C\geq1$, see, e.g., \cite[(III.4.7.12)-(III.4.7.13)]{Amann1}. Moreover, for any $\rho\geq0$ and $\phi\in(0,\pi)$, consider the upward-oriented path
$$
\Gamma_{\rho,\phi}=\{re^{-i\phi}\in\mathbb{C}\,|\,r\geq\rho\}\cup\{\rho e^{i\varphi}\in\mathbb{C}\,|\,\phi\leq\varphi\leq2\pi-\phi\}\cup\{re^{i\phi}\in\mathbb{C}\,|\,r\geq\rho\}.
$$
Sectorial operators admit a holomorphic functional calculus, which is defined via the Dunford integral formula. A typical example is the complex powers: if $A\in \mathcal{P}(0)$, then for $\mathrm{Re}(z)<0$ they are defined by
\begin{equation}\label{fracpaw}
A^{z}=\frac{1}{2\pi i}\int_{\Gamma_{\rho,\varphi}}(-\lambda)^{z}(A+\lambda)^{-1}d\lambda,
\end{equation}
for certain $\rho>0$ and $\varphi\in (0,\pi)$. If, in particular, $\theta\in (0,1)$, then by Cauchy's theorem we can contract the path $\Gamma_{\rho,\varphi}$ to $[0,\infty)$ in the he above formula to obtain
\begin{equation}\label{fracpow}
A^{-\theta}=\frac{\sin(\pi\theta)}{\pi}\int_{0}^{\infty}s^{-\theta}(A+s)^{-1}ds.
\end{equation}
The family $\{A^{z}\}_{\mathrm{Re}(z)<0}\cup\{A^{0}=I\}$ is a strongly continuous analytic semigroup on $X$; see, e.g., \cite[Theorem III.4.6.2 and Theorem III.4.6.5]{Amann1}. Moreover, each $A^{z}$, with $\mathrm{Re}(z)<0$, is injective, and the complex powers for positive real part, $A^{-z}$, are defined by $A^{-z}=(A^{z})^{-1}$; see, e.g., \cite[(III.4.6.12)]{Amann1}. In addition, after integrating by parts in \eqref{fracpow}, we can define the imaginary powers $A^{it}$, $t\in\mathbb{R}\backslash\{0\}$, to be the closure of the operator
$$
A^{it}=\frac{\sin(i\pi t)}{i\pi t}\int_{0}^{\infty}s^{it}(A+s)^{-2}Ads : \mathcal{D}(A) \rightarrow X,
$$
see, e.g., \cite[(III.4.6.21)]{Amann1}. Assume that that there exist some $\delta,M>0$ such that 
\begin{equation}\label{bipdef1}
A^{it}\in \mathcal{L}(X) \quad \text{and} \quad \|A^{it}\|_{\mathcal{L}(X)}\leq M \quad \text{when} \quad t\in[-\delta,\delta].
\end{equation}
Then, see, e.g., \cite[Theorem III.4.7.1]{Amann1}, we have $A^{it}\in \mathcal{L}(X_{0})$ for all $t\in\mathbb{R}$.

\begin{definition}[bounded imaginary powers] 
Let $X$ be a complex Banach space, and let $A\in\mathcal{P}(0)$ in $X$. If $A$ satisfies \eqref{bipdef1}, then we say that {\em $A$ has bounded imaginary powers} and denote this by $A\in\mathcal{BIP}$.
\end{definition}

For further properties of the complex powers of sectorial operators, we refer to \cite[Section III.4.6 and Section III.4.7]{Amann1} or to \cite[Section 15.2]{HNVW1}. We also recall the following relation between the domain of the fractional powers of a sectorial operator and the corresponding interpolation spaces. If $A\in \mathcal{P}(0)$ in a Banach space $X$, then for any $p,q\in(1,\infty)$ and $0<\xi<\gamma<\nu<\eta<\theta<1$ we have
\begin{equation}\label{intemb}
(X,\mathcal{D}(A))_{p,\theta} \hookrightarrow [X,\mathcal{D}(A)]_{\eta}\hookrightarrow \mathcal{D}(A^{\nu}) \hookrightarrow [X,\mathcal{D}(A)]_{\gamma}\hookrightarrow (X,\mathcal{D}(A))_{q,\xi},
\end{equation}
see, e.g., \cite[(I.2.5.2) and (I.2.9.6)]{Amann1}. 

We recall the following notion of boundedness for a family of operators, which is stronger than the usual notion of uniform boundedness.

\begin{definition}[$R$-boundedness]
In a Banach space $X$, a family $\mathcal{A}\subset \mathcal{L}(X)$ is called {\em $R$-bounded} if there exists a $C>0$ such that for any $n\in\mathbb{N}$, operators $A_{1},\dots,A_{n}\in \mathcal{A}$, and vectors $x_{1},\dots,x_{n}\in X$, we have
$$
\|\sum_{k=1}^{n}\epsilon_{k}A_{k}x_{k}\|_{L^{2}(0,1;X)} \leq C \|\sum_{k=1}^{n}\epsilon_{k}x_{k}\|_{L^{2}(0,1;X)},
$$
where $\{\epsilon_{k}\}_{k\in\mathbb{N}}$ is the sequence of Rademacher functions.
\end{definition} 

Next, we consider certain classes of linear operators in a Banach space whose spectrum is either contained in a strip around the real axis or lies on the right-hand side of a horizontal parabola opening to the right.

\begin{definition}[strip-type operators]
Let $c>0$, and let $X$ be a complex Banach space.\\
{\bf (i)} Let $K>0$, and let $\mathcal{Z}_{c,K}$ be the class of all closed densely defined linear operators in $X$ such that, if $A\in \mathcal{Z}_{c,K}$, then
$$
Z_{c}=\{\lambda\in \mathbb{C}\, | \, |\mathrm{Im}(\lambda)|\geq c\}\subset \rho(-A) \quad \text{and} \quad \|(A+\lambda)^{-1}\|_{\mathcal{L}(X)}\leq K \quad \text{for all} \quad \lambda\in Z_{c}.
$$
The elements in $\mathcal{Z}_{c}=\cup_{K>0}\mathcal{Z}_{c,K}$ are called {\em strip-type operators}.\\
{\bf (ii)} Let $\mathcal{RZ}_{c}$ be the subclass of $\mathcal{Z}_{c}$ such that if $A\in \mathcal{RZ}_{c}$, then the family $\{(A+\lambda)^{-1}\, |\, \lambda\in Z_{c}\}$ is $R$-bounded.\\
{\bf (iii)} Let $\mathcal{S}_{c}$ be the subclass of $\mathcal{Z}_{c}$ such that if $A\in \mathcal{S}_{c}$, then
$$
\|(A+\lambda)^{-1}\|_{\mathcal{L}(X)}\rightarrow 0 \quad \text{as} \quad |\mathrm{Im}(\lambda)|\rightarrow\infty, \quad \lambda\in Z_{c}.
$$
{\bf (iv)} Let $K>0$, and let $\mathcal{V}_{c,K}$ be the subclass of $\mathcal{P}(0)$ such that, if $A\in \mathcal{V}_{c,K}$, then
$$
\Pi_{c}=\Big\{\lambda \in\mathbb{C} \, |\, \mathrm{Re}(\lambda)\geq c^{2}-\frac{(\mathrm{Im}(\lambda))^{2}}{4c^{2}} \Big\}\subset \rho(-A),
$$
$$
 \|(A+\lambda)^{-1}\|_{\mathcal{L}(X)}\leq \frac{K}{\sqrt{|\lambda|}}, \quad \text{and} \quad \|A^{\frac{1}{2}}(A+\lambda)^{-1}\|_{\mathcal{L}(X)}\leq K \quad \text{for all} \quad \lambda\in \Pi_{c}.
$$
Denote $\mathcal{V}_{c}=\cup_{K>0}\mathcal{V}_{c,K}$.\\
{\bf (v)} Let $\mathcal{RV}_{c}$ be the subclass of $\mathcal{V}_{c}$ such that, if $A\in \mathcal{RV}_{c}$, then the families
$$
\{\pm \sqrt{\lambda}(A+\lambda)^{-1} \, |\, \lambda\in \Pi_{c}\}, \quad \{A^{\frac{1}{2}}(A+\lambda)^{-1} \, |\, \lambda\in \Pi_{c}\},
$$
are $R$-bounded. 
\end{definition}

For the theory of strip-type operators, including their functional calculus and some spectral properties, we refer to \cite{BMV}, \cite{Haase2}, \cite{Haase3}, \cite[Chapter 4]{Haase}, and \cite[Section 15.4]{HNVW1}. For instance, examples of strip-type operators include the logarithms of sectorial operators, see, e.g., \cite[Theorem 15.4.3]{HNVW1}. Also, observe that $\mathcal{V}_{c}$ is a subclass of {\em parabola-type operators}; see, e.g., \cite[Definition 3.1]{Haase3} and also compare with \cite[Proposition 3.14.18]{ABHN}.

\begin{remark}\label{StripvsParabola}
For any $c>0$, let the set
\begin{equation}\label{StrVSParareas}
H_{c}=\{\lambda\in \mathbb{C} \, |\, \mathrm{Re}(\lambda)|\geq c\}.
\end{equation}
The map $\lambda \mapsto \lambda^{2}$ maps $H_{c}$ onto $\Pi_{c}$, and the map $z\mapsto \pm\sqrt{z}$ maps $\Pi_{c}$ onto $H_{c}$. Hence, if $X$ is a Banach space and $A$ is a linear operator in $X$, then by the identities 
$$
(iA+\lambda)^{-1}(-iA+\lambda)^{-1}(A^{2}+\lambda^{2})=(iA+\lambda)^{-1}(-iA+\lambda)^{-1}(-iA+\lambda)(iA+\lambda)=I \quad \text{on} \quad \mathcal{D}(A^{2}),
$$
$$
(A^{2}+\lambda^{2})(iA+\lambda)^{-1}(-iA+\lambda)^{-1}=(-iA+\lambda)(iA+\lambda)(iA+\lambda)^{-1}(-iA+\lambda)^{-1}=I \quad \text{on} \quad X,
$$
$$
(iA+\lambda)^{-1}(-iA+\lambda)^{-1}=\frac{1}{2\lambda}((iA+\lambda)^{-1}+(-iA+\lambda)^{-1}) \quad \text{on} \quad X,
$$
$$
A(iA+\lambda)^{-1}(-iA+\lambda)^{-1}=\frac{1}{2i}((-iA+\lambda)^{-1}-(iA+\lambda)^{-1}) \quad \text{on} \quad X,
$$
and
$$
(\mp i A+\lambda)(A^{2}+\lambda^{2})^{-1} (\pm i A+\lambda)=I \quad \text{on} \quad \mathcal{D}(A),
$$
$$
(\pm i A+\lambda)(\mp i A+\lambda)(A^{2}+\lambda^{2})^{-1} =I \quad \text{on} \quad X,
$$
$$
(\mp i A+\lambda)(A^{2}+\lambda^{2})^{-1}=\lambda(A^{2}+\lambda^{2})^{-1}\mp i A(A^{2}+\lambda^{2})^{-1} \quad \text{on} \quad X,
$$
valid for any $\lambda\in H_{c}$, we see that $A\in \mathcal{Z}_{c}$ if and only if
$$
\Pi_{c}\subset \rho(-A^{2}) \quad \text{and} \quad \|(A^{2}+z)^{-1}\|_{\mathcal{L}(X)}\leq \frac{K}{\sqrt{|z|}}, \quad \|A(A^{2}+z)^{-1}\|_{\mathcal{L}(X)}\leq K, \quad z\in \Pi_{c},
$$
for certain $K>0$. Moreover, $A\in \mathcal{RZ}_{c}$ if and only if $\Pi_{c}\subset \rho(-A^{2})$ and the families
$$
\{\pm \sqrt{z}(A^{2}+z)^{-1} \, |\, z\in \Pi_{c}\}, \quad \{A(A^{2}+z)^{-1} \, |\, z\in \Pi_{c}\},
$$
are $R$-bounded. \mbox{\ } \hfill $\Diamond$
\end{remark}

\begin{lemma}\label{fracpowdec}
Let $\theta\in (0,1)$, $X$ be a complex Banach space, and let $A\in \mathcal{P}(0)$ in $X$. For any $\lambda\in \rho(-A)\backslash [0,\infty)$, we have
$$
(A+\lambda)^{-1}A^{-\theta}=(-\lambda)^{-\theta}(A+\lambda)^{-1}+Q_{A}(\lambda),
$$
where 
$$
Q_{A}(z)=\frac{\sin(\pi \theta)}{\pi}\int_{0}^{\infty}\frac{s^{-\theta}}{z-s}(A+s)^{-1}ds \in \mathcal{L}(X), \quad z\in \mathbb{C}\backslash [0,\infty).
$$
Moreover, the map
$$
 \mathbb{C}\backslash [0,\infty)\ni z \mapsto Q_{A}(z)\in \mathcal{L}(X)
$$
is holomorphic, and for any $c>0$ we have
$$
\|Q_{A}(z)\|_{\mathcal{L}(X)}\rightarrow 0 \quad \text{as} \quad |z|\rightarrow \infty, \quad z\in Z_{c}.
$$
Finally, for any $\phi\in (0,\pi]$ and $\rho>0$, there exists a $C>0$ such that 
$$
|z|\|Q_{A}(z)\|_{\mathcal{L}(X)}\leq C\quad \text{for all} \quad z\in \{\lambda\in \mathbb{C}\, |\, |\lambda|\geq\rho \,\, \text{and} \,\, \arg(\lambda)\geq\phi\}.
$$
\end{lemma}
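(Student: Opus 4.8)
The plan is to prove the five assertions in turn, all resting on the single elementary resolvent estimate coming from $A\in\mathcal{P}(0)$: by \eqref{extsect}, specialised to the ray $[0,\infty)\subset\rho(-A)$, there is a $C\geq1$ with $\|(A+s)^{-1}\|_{\mathcal{L}(X)}\leq C/(1+s)$ for all $s\geq0$. First I would settle boundedness and holomorphy. For $z\in\mathbb{C}\backslash[0,\infty)$ the integrand defining $Q_{A}(z)$ is bounded in $\mathcal{L}(X)$ by $\tfrac{C}{1+s}\tfrac{s^{-\theta}}{|z-s|}$; near $s=0$ this is $O(s^{-\theta})$ and near $s=\infty$ it is $O(s^{-\theta-2})$, both integrable since $\theta\in(0,1)$, so the Bochner integral converges absolutely and $Q_{A}(z)\in\mathcal{L}(X)$. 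Holomorphy follows because the truncated integrals $\tfrac{\sin(\pi\theta)}{\pi}\int_{1/n}^{n}\tfrac{s^{-\theta}}{z-s}(A+s)^{-1}\,ds$ are holomorphic in $z$ (holomorphic integrand, continuous in $s$ over a compact interval) and, by the same integrable domination restricted to compact subsets of $\mathbb{C}\backslash[0,\infty)$, converge to $Q_{A}$ locally uniformly; a locally uniform limit of holomorphic $\mathcal{L}(X)$-valued functions is holomorphic.

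Next the identity. Since $(A+\lambda)^{-1}$ is bounded, I would move it inside the representation \eqref{fracpow} of $A^{-\theta}$ and use the resolvent identity $(A+\lambda)^{-1}(A+s)^{-1}=\tfrac{1}{s-\lambda}\bigl((A+\lambda)^{-1}-(A+s)^{-1}\bigr)$, legitimate because $\lambda\notin[0,\infty)$ keeps $s-\lambda$ away from $0$ on the path (so no principal value is needed). This splits $(A+\lambda)^{-1}A^{-\theta}$ into two absolutely convergent integrals: the one retaining $(A+s)^{-1}$ is precisely $Q_{A}(\lambda)$, while the other is $(A+\lambda)^{-1}$ times the scalar integral $\tfrac{\sin(\pi\theta)}{\pi}\int_{0}^{\infty}\tfrac{s^{-\theta}}{s-\lambda}\,ds$. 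Evaluating this scalar integral is the crux: by analytic continuation of the standard formula $\int_{0}^{\infty}\tfrac{s^{a-1}}{s+x}\,ds=\tfrac{\pi}{\sin(\pi a)}x^{a-1}$ (with $a=1-\theta$, $x=-\lambda$, and the principal branch of the power) it equals $\tfrac{\pi}{\sin(\pi\theta)}(-\lambda)^{-\theta}$, which produces the leading term $(-\lambda)^{-\theta}(A+\lambda)^{-1}$ and establishes the identity.

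For the decay on $Z_{c}$ I would argue by dominated convergence. For $z\in Z_{c}$ one has $|z-s|\geq|\mathrm{Im}(z)|\geq c$ for every $s\geq0$, so the integrand is dominated, uniformly in such $z$, by the fixed integrable function $G(s)=\tfrac{C}{c}\tfrac{s^{-\theta}}{1+s}$; since for each fixed $s$ the integrand tends to $0$ as $|z|\to\infty$, the dominated convergence theorem gives $\|Q_{A}(z)\|_{\mathcal{L}(X)}\to0$.

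Finally, the quantitative bound. The key geometric observation is that on the stated region, where $\arg(z)\geq\phi$ forces $\cos(\arg(z))\leq\cos\phi=:\gamma<1$, one has $|z-s|\geq\sin(\phi/2)(|z|+s)$ for all $s\geq0$. Indeed $|z-s|^{2}=|z|^{2}+s^{2}-2|z|s\cos(\arg(z))\geq|z|^{2}+s^{2}-2\gamma|z|s$, and writing $t=s/|z|$, the elementary minimisation $\min_{t\geq0}\tfrac{1-2\gamma t+t^{2}}{(1+t)^{2}}=\tfrac{1-\gamma}{2}=\sin^{2}(\phi/2)$ (attained at $t=1$) yields the claim with $\kappa:=\sin(\phi/2)>0$. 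Inserting this bound and then $\tfrac{|z|}{|z|+s}\leq1$ gives $|z|\,\|Q_{A}(z)\|_{\mathcal{L}(X)}\leq\tfrac{C\sin(\pi\theta)}{\pi\kappa}\int_{0}^{\infty}\tfrac{s^{-\theta}}{1+s}\,ds=\tfrac{C}{\kappa}$, using the same Beta-type integral $\int_{0}^{\infty}\tfrac{s^{-\theta}}{1+s}\,ds=\tfrac{\pi}{\sin(\pi\theta)}$; the bound is in fact uniform over the whole sector, so the restriction $|z|\geq\rho$ is harmless. The main obstacles are the correct evaluation of the scalar integral (with the principal branch of $(-\lambda)^{-\theta}$) underlying the identity, and the uniform lower bound $|z-s|\geq\sin(\phi/2)(|z|+s)$ underlying the last estimate; the remaining steps are routine absolute-convergence and dominated-convergence arguments.
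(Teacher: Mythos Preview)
Your proof is correct and follows essentially the same route as the paper: derive the identity by inserting the integral representation \eqref{fracpow}, applying the resolvent identity, and evaluating the resulting scalar integral as $(-\lambda)^{-\theta}$; then handle the remaining assertions by domination arguments. The paper compresses everything after the identity into a single appeal to the dominated convergence theorem, whereas you spell out the dominating functions and, for the final quantitative estimate, supply the explicit geometric lower bound $|z-s|\geq\sin(\phi/2)(|z|+s)$, which makes transparent why the bound is uniform over the sector (the paper's invocation of ``dominated convergence'' for this last point is really shorthand for exactly such a uniform integrable majorant).
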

\begin{proof}
By \eqref{fracpow}, we have
\begin{eqnarray*}
(A+\lambda)^{-1}A^{-\theta}&=&\frac{\sin(\pi\theta)}{\pi}\int_{0}^{\infty}\frac{s^{-\theta}}{s-\lambda}((A+\lambda)^{-1}-(A+s)^{-1})ds\\
&=&\Big(\frac{\sin(\pi\theta)}{\pi}\int_{0}^{\infty}\frac{s^{-\theta}}{s-\lambda}ds\Big)(A+\lambda)^{-1}+Q_{A}(\lambda),
\end{eqnarray*}
and the last integral equals $(-\lambda)^{-\theta}$. The rest of the proof follows by the dominated converges theorem (see, e.g., \cite[Theorem 1.1.8]{ABHN}).
\end{proof}

\begin{remark}
Let $c,K>0$, and let $X$ be a complex Banach space.\\
{\bf (i)} By the elementary identity
\begin{equation}\label{resdecDA}
\lambda(A+\lambda)^{-1}=I-A(A+\lambda)^{-1}, \quad \lambda\in \rho(-A),
\end{equation}
if $A\in \mathcal{Z}_{c,K}$ in $X$, then
\begin{equation}\label{decayto0}
|\lambda|\|(A+\lambda)^{-1}\|_{\mathcal{L}(\mathcal{D}(A),X)}\leq K+1 \quad \text{for all} \quad \lambda\in Z_{c}.
\end{equation}
{\bf (ii)} If $A\in \mathcal{P}(0)\cap \mathcal{Z}_{c}$ in $X$, then, by \eqref{extsect}, Lemma \ref{fracpowdec}, and \eqref{decayto0}, for any $\theta\in [0,1]$ there exists a $C>0$ such that 
\begin{equation}\label{BONZ}
|\lambda|^{\theta}\|(A+\lambda)^{-1}A^{-\theta}\|_{\mathcal{L}(X)}\leq C \quad \text{for all} \quad \lambda\in Z_{c}.
\end{equation}
\mbox{\ } \hfill $\Diamond$
\end{remark}

If $a\in\mathbb{R}$, denote by $i\mathbb{R}+a$ the upward-oriented path $\{\lambda\in \mathbb{C}\, | \, \mathrm{Re}(\lambda)=a\}$, where we simply write $i\mathbb{R}$ when $a=0$.

\begin{lemma}\label{sectproj}
Let $a\in\mathbb{R}$, $K>0$, and let $X$ be a complex Banach space. Let $A$ be a linear operator in $X$ such that 
$$
R_{a}=\{\lambda\in \mathbb{C}\, | \, \mathrm{Re}(\lambda) \geq a\}\subset \rho(-A), \quad \text{and} \quad \|(A+\lambda)^{-1}\|_{\mathcal{L}(X)}\leq K \quad \text{for all} \quad \lambda\in R_{a}.
$$
Assume, in addition, that at least one of the following two conditions is satisfied.\\
{\bf (i)} For any $v\in X$, we have
$$
\|(A+\lambda)^{-1}v\|_{X}\rightarrow 0 \quad \text{as} \quad |\lambda| \rightarrow \infty, \quad \lambda\in R_{a}.
$$
{\bf (ii)} We have
$$
\|(A+\lambda)^{-1}\|_{\mathcal{L}(X)}\rightarrow 0 \quad \text{as} \quad \mathrm{Re}(\lambda) \rightarrow \infty, \quad \lambda\in R_{a}.
$$
Then
$$
\Xint-_{i\mathbb{R}+a}(A+\lambda)^{-1}ud\lambda=i\pi u \quad \text{for any} \quad u\in\mathcal{D}(A).
$$
\end{lemma}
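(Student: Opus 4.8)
The plan is to evaluate the principal value as the symmetric limit $I_R=\int_{a-iR}^{a+iR}(A+\lambda)^{-1}u\,d\lambda$ (the integrand decays only like $|\lambda|^{-1}$, so the improper integral exists only in this symmetric, i.e. principal-value, sense) and to prove $I_R\to i\pi u$. First I would close the vertical segment by the right half-circle $\sigma_R=\{a+Re^{i\theta}\,|\,-\pi/2\le\theta\le\pi/2\}$. Since $R_a\subset\rho(-A)$ contains the closed half-disk bounded by the segment and $\sigma_R$, the map $\lambda\mapsto(A+\lambda)^{-1}u$ is holomorphic there, so the vector-valued Cauchy theorem gives that the integral over the positively oriented boundary vanishes; as that boundary is $\sigma_R$ traversed counterclockwise followed by the diameter traversed downward, this yields $I_R=\int_{\sigma_R}(A+\lambda)^{-1}u\,d\lambda$ with $\sigma_R$ counterclockwise.

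On $\sigma_R$ I would insert the identity \eqref{resdecDA}, which for $u\in\mathcal{D}(A)$ reads $(A+\lambda)^{-1}u=\lambda^{-1}u-\lambda^{-1}(A+\lambda)^{-1}Au$ (valid on $\sigma_R$ once $R>|a|$, as then $\lambda\ne0$). Parametrizing $\lambda=a+Re^{i\theta}$, the first term contributes $\int_{-\pi/2}^{\pi/2}\frac{iRe^{i\theta}}{a+Re^{i\theta}}\,u\,d\theta$; since $iRe^{i\theta}/(a+Re^{i\theta})\to i$ uniformly in $\theta$ as $R\to\infty$, this converges to $\int_{-\pi/2}^{\pi/2}iu\,d\theta=i\pi u$. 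Here the opening angle $\pi$ of the half-circle is exactly what produces $i\pi$ rather than the full $2\pi i$. It then remains to prove that the remainder $J_R=\int_{-\pi/2}^{\pi/2}\frac{iRe^{i\theta}}{a+Re^{i\theta}}(A+\lambda)^{-1}Au\,d\theta$ (the second term equals $-J_R$) tends to $0$.

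The vanishing of $J_R$ is where the hypotheses (i) and (ii) enter, and handling (ii) is the main obstacle. Under (i) the argument is immediate: the prefactor $iRe^{i\theta}/(a+Re^{i\theta})$ is bounded and $\|(A+\lambda)^{-1}Au\|\to0$ uniformly on $\sigma_R$ because $|\lambda|\ge R-|a|\to\infty$, so $J_R\to0$. Under (ii) the decay $\|(A+\lambda)^{-1}\|\to0$ holds only as $\mathrm{Re}(\lambda)\to\infty$, which fails near the endpoints $\theta=\pm\pi/2$ of $\sigma_R$, where $\mathrm{Re}(\lambda)=a+R\cos\theta\to a$. I would therefore fix $\delta\in(0,\pi/2)$ and split the arc: on the two end pieces $|\theta|\in[\pi/2-\delta,\pi/2]$ the integrand is bounded (using $\|(A+\lambda)^{-1}\|\le K$ and the bounded prefactor), so their contribution is at most a fixed constant times $\delta\|Au\|_X$ uniformly in $R$; on the middle piece $|\theta|\le\pi/2-\delta$ one has $\mathrm{Re}(\lambda)\ge a+R\sin\delta\to\infty$, so (ii) forces $\sup\|(A+\lambda)^{-1}\|\to0$ and this part vanishes as $R\to\infty$ for each fixed $\delta$. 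Letting $R\to\infty$ and then $\delta\to0$ gives $J_R\to0$, and combining the two terms yields $I_R\to i\pi u$, which is the claim. Finally I would note that no restriction on the sign of $a$ is needed: the possible pole of the scalar factor $\lambda^{-1}$ at the origin is harmless, since Cauchy's theorem is applied to the genuinely holomorphic function $(A+\lambda)^{-1}u$, the decomposition being used only as an algebraic identity on $\sigma_R$, where $\lambda\ne0$.
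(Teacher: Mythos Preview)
Your proof is correct and follows essentially the same contour-deformation strategy as the paper: close the vertical segment by the right half-circle, split off a scalar term that produces $i\pi u$, and show the remainder vanishes on the arc. The only cosmetic differences are that the paper uses the shifted scalar $(1-a+\lambda)^{-1}$ in place of your $\lambda^{-1}$ (performing the subtraction already on the line via the known identity $\Xint-_{i\mathbb{R}+a}(1-a+\lambda)^{-1}d\lambda=i\pi$) and, for hypothesis~(ii), dispatches the half-circle remainder by dominated convergence rather than your explicit $\delta$-splitting of the arc.
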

\begin{proof}
Starting with
\begin{equation}\label{iprint}
\Xint-_{i\mathbb{R}+a}(1-a+\lambda)^{-1}d\lambda=i\pi,
\end{equation}
we formally have 
$$
\Xint-_{i\mathbb{R}+a}(A+\lambda)^{-1}ud\lambda-i\pi u=\Xint-_{i\mathbb{R}+a}(1-a+\lambda)^{-1}(A+\lambda)^{-1}(1-a-A)ud\lambda.
$$
If (i) holds, then the integral on the right-hand side of the above equation is zero by Cauchy's theorem. If (ii) holds, then, by Cauchy's theorem, the norm of the right-hand side of the above equation becomes
$$
\leq \lim_{r\rightarrow\infty}\int_{-\frac{\pi}{2}}^{\frac{\pi}{2}} \frac{r}{|1+re^{i\phi}|}\|(A+a+re^{i\phi})^{-1}\|_{\mathcal{L}(X)}\|(1-a-A)u\|_{X}d\phi=0,
$$
where, at the last step, we have used the dominated converges theorem. 
\end{proof}

We also obtain the following analogous result.

\begin{lemma}\label{specjct2j}
Let $c>0$, $X$ be a complex Banach space, and let $A$ be a linear operator in $X$. Assume that at least one of the following two conditions is satisfied:\\
{\bf (i)} $A\in \mathcal{P}(0)\cap\mathcal{Z}_{c}$ in $X$ and $u\in \mathcal{D}(A^{1+\theta})$, for some $\theta\in (0,1)$.\\
{\bf (ii)} $A\in \mathcal{S}_{c}$ in $X$ and $u\in \mathcal{D}(A)$.\\
Then
$$
\Xint-_{i\mathbb{R}+c}(\pm iA+\lambda)^{-1}ud\lambda =i\pi u.
$$
\end{lemma}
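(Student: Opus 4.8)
The plan is to reduce both alternatives to the vertical-line resolvent integral already treated in Lemma~\ref{sectproj}, applied to the operator $B=\pm iA$ with $a=c$. First I would record the elementary factorisation $\pm iA+\lambda=\pm i(A\mp i\lambda)$, which gives
$$
(\pm iA+\lambda)^{-1}=\mp i\,(A+\mu)^{-1},\qquad \mu=\mp i\lambda .
$$
For $\lambda\in R_{c}$, i.e. $\mathrm{Re}(\lambda)\geq c$, one has $|\mathrm{Im}(\mu)|=\mathrm{Re}(\lambda)\geq c$, so $\mu\in Z_{c}$. Hence, whether $A\in\mathcal{P}(0)\cap\mathcal{Z}_{c}$ or $A\in\mathcal{S}_{c}\subset\mathcal{Z}_{c}$, the membership $A\in\mathcal{Z}_{c,K}$ yields $R_{c}\subset\rho(-(\pm iA))$ and $\|(\pm iA+\lambda)^{-1}\|_{\mathcal{L}(X)}=\|(A+\mu)^{-1}\|_{\mathcal{L}(X)}\leq K$ on $R_{c}$. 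This verifies the two standing hypotheses of Lemma~\ref{sectproj} for $B=\pm iA$ and $a=c$; since $\mathcal{D}(\pm iA)=\mathcal{D}(A)$, the desired identity is exactly its conclusion, provided the relevant decay condition is supplied.

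In case (ii), $A\in\mathcal{S}_{c}$ gives $\|(A+\mu)^{-1}\|_{\mathcal{L}(X)}\to0$ as $|\mathrm{Im}(\mu)|\to\infty$ in $Z_{c}$; since $|\mathrm{Im}(\mu)|=\mathrm{Re}(\lambda)$, this is precisely $\|(\pm iA+\lambda)^{-1}\|_{\mathcal{L}(X)}\to0$ as $\mathrm{Re}(\lambda)\to\infty$, i.e. hypothesis (ii) of Lemma~\ref{sectproj}. As $u\in\mathcal{D}(A)=\mathcal{D}(\pm iA)$, the conclusion follows immediately.

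In case (i) the uniform decay is no longer available, so I would instead repeat the argument of Lemma~\ref{sectproj} while exploiting the extra regularity of $u$ via \eqref{BONZ}. Starting from the scalar identity \eqref{iprint} (with $a=c$) and the resolvent identity, the statement reduces to
$$
\Xint-_{i\mathbb{R}+c}(1-c+\lambda)^{-1}(\pm iA+\lambda)^{-1}(1-c\mp iA)u\,d\lambda=0 .
$$
The key observation is that $u\in\mathcal{D}(A^{1+\theta})$ forces both $u$ and $Au$ into $\mathcal{D}(A^{\theta})$, so that $v:=(1-c\mp iA)u\in\mathcal{D}(A^{\theta})$; writing $v=A^{-\theta}g$ with $g=A^{\theta}v$ and using \eqref{BONZ} gives $\|(\pm iA+\lambda)^{-1}v\|_{X}=\|(A+\mu)^{-1}A^{-\theta}g\|_{X}\leq C|\lambda|^{-\theta}\|g\|_{X}$ on $R_{c}$. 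Together with $|1-c+\lambda|^{-1}\leq C|\lambda|^{-1}$, the integrand is thus $O(|\lambda|^{-1-\theta})$, hence absolutely integrable along $i\mathbb{R}+c$ (so the principal value is a genuine integral), while closing the contour by a semicircle of radius $r$ in $R_{c}$—where the integrand is holomorphic—produces an arc contribution of size $O(r^{-\theta})\to0$; Cauchy's theorem then yields the displayed vanishing.

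The only delicate point is the regularity bookkeeping in case (i): one must confirm that $v=(1-c\mp iA)u$ inherits membership in $\mathcal{D}(A^{\theta})$ from $u\in\mathcal{D}(A^{1+\theta})$, since this is what turns the merely $o(1)$ behaviour of the resolvent into the summable rate $|\lambda|^{-\theta}$ needed to annihilate the closing arc. The remaining ingredients—the resolvent algebra of the first paragraph, the comparison with the scalar principal value, and the verification of hypothesis (ii) of Lemma~\ref{sectproj} in case (ii)—are routine.
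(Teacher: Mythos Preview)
Your proposal is correct and follows essentially the same route as the paper's proof: case (ii) is reduced directly to Lemma~\ref{sectproj} via the rotation $\mu=\mp i\lambda$, and case (i) is handled by subtracting the scalar principal value \eqref{iprint}, inserting $A^{-\theta}$ to invoke \eqref{BONZ}, and closing the contour with Cauchy's theorem using the $O(|\lambda|^{-1-\theta})$ decay. The paper writes the integrand as $(1-c+\lambda)^{-1}(\pm iA+\lambda)^{-1}A^{-\theta}(1-c\mp iA)A^{\theta}u$, which is your $v=(1-c\mp iA)u\in\mathcal{D}(A^{\theta})$ rewritten as $A^{-\theta}g$; the regularity bookkeeping you flag as delicate is exactly what makes this insertion legitimate, and your justification is sound.
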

\begin{proof}
{\bf (i)} 
By \eqref{iprint}, we formally have
$$
\Xint-_{i\mathbb{R}+c}(\pm iA+\lambda)^{-1}ud\lambda -i\pi u=\Xint-_{i\mathbb{R}+c}(1-c+\lambda)^{-1}(\pm iA+\lambda)^{-1}A^{-\theta}(1-c\mp iA)A^{\theta}ud\lambda,
$$
and the last integral is zero by \eqref{BONZ} and Cauchy's theorem.\\
{\bf (ii)} The result follows by Lemma \ref{sectproj}.
\end{proof}

\section{The derivation operator in $L^{p}$}\label{Sec3}

In this section, we study the derivation operator on vector-valued functions defined over a finite interval. We begin with the notion of Sobolev spaces of fractional order, see, e.g., \cite[Remark VII.3.6.4]{Amann2} or \cite[Section 4.1]{DG} or \cite[Definition 2.5.16]{HNVW}.

\begin{definition}[Sobolev-Slobodetskii spaces]
Let $p\in (1,\infty)$, $k\in \mathbb{N}_{0}$, $s\in (0,1)$, $T>0$, and let $X$ be a Banach space. \\
{\bf (i)} Let $W^{k+s,p}(0,T;X)$ be the space of all functions $u\in W^{k,p}(0,T;X)$ such that 
$$
[\partial^{k}u]_{W^{s,p}(0,T;X)}=\Big(\int_{0}^{T}\int_{0}^{T}\frac{\|(\partial^{k}u)(x)-(\partial^{k}u)(y)\|_{X}^{p}}{|x-y|^{1+sp}}dxdy\Big)^{\frac{1}{p}}<\infty,
$$
endowed with the norm
$$
\|u\|_{W^{s,p}(0,T;X)}=\|u\|_{W^{k,p}(0,T;X)}+[\partial^{k}u]_{W^{s,p}(0,T;X)}.
$$
{\bf (ii)} If $q\in(1,\infty)\cup\{\infty\}$ and $m\in\mathbb{N}$, we denote
$$
W_{0}^{m,q}(0,T;X)=\{u\in W^{m,q}(0,T;X) \, | \, u(0)=(\partial u)(0)=\cdots=(\partial^{m-1}u)(0)=0\}
$$
and
$$
W_{0}^{s,p}(0,T;X)=\Bigg\{\begin{array}{lll} W^{s,p}(0,T;X) & \text{if} & s<1/p, \\ \{u\in W^{s,p}(0,T;X) \, |\, [u]_{p}<\infty\} & \text{if} & s=1/p, \\ \{u\in W^{s,p}(0,T;X) \, |\, u(0)=0\} & \text{if} & s>1/p, \end{array}
$$
where 
$$
[u]_{p}=\Big(\int_{0}^{T}\|u(x)\|_{X}^{p}\frac{dx}{x}\Big)^{\frac{1}{p}}.
$$
Moreover, let
$$
W_{0}^{m+s,p}(0,T;X)=\{u\in W_{0}^{m,p}(0,T;X) \, | \, \partial^{m}u\in W_{0}^{s,p}(0,T;X)\}.
$$
\end{definition}

Let $p\in (1,\infty)$, $T\in (0,\infty)$, and let $X$ be a complex Banach space. Consider the operator
\begin{equation}\label{Bdef}
W_{0}^{1,p}(0,T;X)=D(B)\ni u\mapsto Bu=\partial_{t}u\in L^{p}(0,T;X),
\end{equation}
where the differentiation is understood to be in the sense of almost every $t\in(0,T)$. The spectrum of this operator is empty, and for any $u\in L^{p}(0,T;X)$, the resolvent is given by 
\begin{equation}\label{expresdt}
\big((B+\lambda)^{-1}u\big)(t)=\int_{0}^{t}e^{\lambda(x-t)}u(x)dx \quad t\in[0,T), \quad \lambda\in\mathbb{C}.
\end{equation}
By Young's inequality for convolution, see, e.g., \cite[Proposition 1.3.2(a)]{ABHN} or \cite[Lemma 14.2.3]{HNVW1}, we find that
\begin{equation}\label{BRND}
\|(B+\lambda)^{-1}\|_{\mathcal{L}(L^{p}(0,T;X))}\leq \bigg\{\begin{array}{cll} (1-e^{-\mathrm{Re}(\lambda)T})/(\mathrm{Re}(\lambda)) &, & \lambda\in \mathbb{C}\backslash i\mathbb{R}, \\ T &, & \lambda\in i\mathbb{R}. \end{array}
\end{equation}
Furthermore, by \eqref{resdecDA}, we infer that 
\begin{equation}\label{BDomdecay}
|\lambda|\|(B+\lambda)^{-1}\|_{\mathcal{L}(\mathcal{D}(B),L^{p}(0,T;X))}\leq 1+\bigg\{\begin{array}{cll} (1-e^{-\mathrm{Re}(\lambda)T})/(\mathrm{Re}(\lambda)) &, & \lambda\in \mathbb{C}\backslash i\mathbb{R}, \\ T &, & \lambda\in i\mathbb{R}. \end{array}
\end{equation}

\begin{lemma}\label{decayBres}
Let $p\in (1,\infty)$, $T>0$, and let $X$ be a complex Banach space. If $B$ is the derivation operator defined in \eqref{Bdef}, then, for any $a\in\mathbb{R}$ and $u\in L^{p}(0,T;X)$, we have
$$
\lim_{|\lambda|\rightarrow \infty, \, \mathrm{Re}(\lambda)\geq a}\|(B+\lambda)^{-1}u\|_{L^{p}(0,T;X)}=0.
$$
\end{lemma}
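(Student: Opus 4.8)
The plan is to combine a single uniform resolvent bound on the half-plane $R_a=\{\lambda\in\mathbb{C}\,|\,\mathrm{Re}(\lambda)\geq a\}$ with the explicit $|\lambda|^{-1}$-decay on the dense subspace $\mathcal{D}(B)$ furnished by \eqref{BDomdecay}, and then to pass to general $u$ by a density/$\varepsilon$-argument. First I would record the uniform bound. Writing $\lambda=r+is$ with $r=\mathrm{Re}(\lambda)\geq a$, the right-hand side of \eqref{BRND} is governed by the function $g(r)=(1-e^{-rT})/r$ (with the value $g(0)=T$), which extends analytically across $r=0$ and satisfies $g(r)\to 0$ as $r\to+\infty$; hence $M_{a}:=\sup_{r\geq a}g(r)<\infty$, and therefore
\[
\|(B+\lambda)^{-1}\|_{\mathcal{L}(L^{p}(0,T;X))}\leq M_{a} \quad \text{for all} \quad \lambda\in R_{a}.
\]

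Next I would use \eqref{BDomdecay} on the core $\mathcal{D}(B)=W_{0}^{1,p}(0,T;X)$. Since the maximum on the right-hand side of \eqref{BDomdecay} is again bounded by $M_{a}$ on $R_{a}$, for every $w\in\mathcal{D}(B)$ we get
\[
\|(B+\lambda)^{-1}w\|_{L^{p}(0,T;X)}\leq \frac{1+M_{a}}{|\lambda|}\,\|w\|_{\mathcal{D}(B)} \quad \text{for all} \quad \lambda\in R_{a},
\]
which tends to $0$ as $|\lambda|\to\infty$ with $\mathrm{Re}(\lambda)\geq a$. Thus the asserted limit holds on $\mathcal{D}(B)$, and in fact with an explicit rate. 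To conclude for arbitrary $u\in L^{p}(0,T;X)$, I would invoke density: $C_{c}^{\infty}((0,T);X)\subset \mathcal{D}(B)$ is dense in $L^{p}(0,T;X)$, so given $\varepsilon>0$ choose $w\in\mathcal{D}(B)$ with $\|u-w\|_{L^{p}(0,T;X)}<\varepsilon$ and estimate, for $\lambda\in R_{a}$,
\[
\|(B+\lambda)^{-1}u\|_{L^{p}(0,T;X)}\leq M_{a}\,\|u-w\|_{L^{p}(0,T;X)}+\frac{1+M_{a}}{|\lambda|}\,\|w\|_{\mathcal{D}(B)}.
\]
Letting $|\lambda|\to\infty$ with $\mathrm{Re}(\lambda)\geq a$ gives $\limsup\|(B+\lambda)^{-1}u\|_{L^{p}(0,T;X)}\leq M_{a}\varepsilon$, and since $\varepsilon>0$ is arbitrary the limit is $0$.

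The point to emphasize is where the genuine content lies. When $\mathrm{Re}(\lambda)\to+\infty$ the bound \eqref{BRND} already forces $\|(B+\lambda)^{-1}\|_{\mathcal{L}(L^p)}\to 0$ uniformly in $u$, so that direction is automatic. The delicate regime is $|\mathrm{Im}(\lambda)|\to\infty$ with $\mathrm{Re}(\lambda)$ bounded: there \eqref{BRND} yields only a uniform bound, \emph{not} decay, and indeed the operator norm stays bounded away from $0$, so the stated convergence cannot hold in operator norm and is genuinely pointwise in $u$. The main obstacle is thus to exhibit the cancellation (a vector-valued Riemann--Lebesgue effect coming from the oscillatory kernel $e^{\lambda(x-t)}$ in \eqref{expresdt}); the role of the regularity estimate \eqref{BDomdecay} together with the density of $\mathcal{D}(B)$ is precisely to supply it, converting the uniform boundedness into pointwise decay on all of $L^{p}(0,T;X)$.
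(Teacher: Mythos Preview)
Your proof is correct and takes a genuinely different route from the paper's. The paper argues by contradiction: assuming a sequence $\lambda_{k}\in R_{a}$ with $|\lambda_{k}|\to\infty$ and $\|(B+\lambda_{k})^{-1}u\|\geq c>0$, it first disposes of the case $\mathrm{Re}(\lambda_{k})\to\infty$ via \eqref{BRND}, then passes to a subsequence with $\mathrm{Re}(z_{k})\to x$, and for each fixed $t$ applies the vector-valued Riemann--Lebesgue lemma directly to the explicit kernel $e^{\lambda(x-t)}$ in \eqref{expresdt} to obtain pointwise decay, finishing with dominated convergence in $t$. Your approach instead exploits that \eqref{BDomdecay}---a one-line consequence of the algebraic identity \eqref{resdecDA}---already yields an explicit $|\lambda|^{-1}$ rate on the dense subspace $\mathcal{D}(B)$, and then the uniform resolvent bound on $R_{a}$ together with density upgrades this to strong convergence on all of $L^{p}(0,T;X)$. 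What your route buys is brevity and transparency: no contradiction, no subsequences, no explicit appeal to Riemann--Lebesgue; the oscillatory cancellation is packaged into the ``integration by parts'' hidden in \eqref{resdecDA}. The paper's route, by contrast, works directly from \eqref{expresdt} without singling out a regularity subspace. One minor wording slip: the right-hand side of \eqref{BDomdecay} is bounded on $R_{a}$ by $1+M_{a}$, not $M_{a}$; but you use $1+M_{a}$ in the displayed inequality, so the argument is unaffected.
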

\begin{proof}
Assuming the contrary, there exist $c>0$ and $\lambda_{k}\in \{\lambda\in \mathbb{C} \, | \, \mathrm{Re}(\lambda)\geq a\}$, $k\in \mathbb{N}$, such that $|\lambda_{k}|\rightarrow \infty$ as $k\rightarrow\infty$ and $\|(B+\lambda_{k})^{-1}u\|_{L^{p}(0,T;X)}\geq c$ for all $k\in\mathbb{N}$. If there exists a subsequence $\{\mu_{k}\}_{k\in\mathbb{N}}$ of $\{\lambda_{k}\}_{k\in\mathbb{N}}$ such that $\mathrm{Re}(\mu_{k})\rightarrow \infty$ as $k\rightarrow \infty$, then we obtain a contradiction by \eqref{BRND}. Hence, there exists a $b\geq a$ such that $\mathrm{Re}(\lambda_{k})\leq b$ for each $k\in\mathbb{N}$. Choose a subsequence $\{z_{k}\}_{k\in\mathbb{N}}$ of $\{\lambda_{k}\}_{k\in\mathbb{N}}$ such that $\mathrm{Re}(z_{k})\rightarrow x\in[a,b]$ as $k\rightarrow \infty$. Let $z_{k}=x_{k}+iy_{k}$ with $x_{k},y_{k}\in \mathbb{R}$, $k\in\mathbb{N}$. For any fixed $t\in(0,T]$, we have
$$
\|((B+z_{k})^{-1}u)(t)-((B+x+iy_{k})^{-1}u)(t)\|_{X}\leq \int_{0}^{t}|e^{x_{k}(s-t)}-e^{x(s-t)}|\|u(s)\|_{X}ds
$$
and, by the dominated convergence theorem, the last term tends to zero as $k\rightarrow \infty$. Hence,
\begin{eqnarray*}
\lim_{k\rightarrow\infty}\|((B+z_{k})^{-1}u)(t)-((B+x+iy_{k})^{-1}u)(t)\|_{X}=0.
\end{eqnarray*}
On the other hand, if $\chi_{t}$ is the characteristic function on $[0,t]$, we have
$$
((B+x+iy_{k})^{-1}u)(t)=e^{-(x+iy_{k})t}\int_{\mathbb{R}}e^{iy_{k}s}\chi_{t}(s)e^{xs}u(s)ds,
$$
so that, by the Riemann-Lebesgue lemma (see, e.g., \cite[Section III.4.2, p. 130]{Amann1} or \cite[Theorem 1.8.1(c)]{ABHN}), we find
$$
\lim_{k\rightarrow\infty}\|((B+x+iy_{k})^{-1}u)(t)\|_{X}=0.
$$
We conclude that
$$
\lim_{k\rightarrow \infty}\|((B+z_{k})^{-1}u)(t)\|_{X}=0
$$
for each $t\in[0,T]$. Hence, by the dominated convergence theorem, we obtain
$$
\lim_{k\rightarrow \infty}\|(B+z_{k})^{-1}u\|_{L^{p}(0,T;X)}=0
$$
which yields a contradiction.
\end{proof}

If $s\in (0,1)$, then
\begin{equation}\label{inrtSob0}
(L^{p}(0,T;X), W_{0}^{1,p}(0,T;X))_{s,p}=W_{0}^{s,p}(0,T;X),
\end{equation}
see, e.g., \cite[Appendix A.2]{DG} or \cite[Section 2]{MeSc} (see also \cite[Theorem 2.5.17]{HNVW}). In particular, for any $k\in\mathbb{N}_{0}$, both $W^{k+s,p}(0,T;X)$ and $W_{0}^{k+s,p}(0,T;X)$ are Banach spaces. Note that the definition of $W_{0}^{s,p}(0,T;X)$ when $s>1/p$ makes sense due to the Sobolev embedding
$$
W^{s,p}(0,T;X)\hookrightarrow C([0,T];X),
$$
see, e.g., \cite[Proposition 2.10]{MeSc} or \cite[Section 3.4.5(v)]{PS1} combined with \eqref{intemb} and \eqref{BRND}. Moreover, for any $m \in\mathbb{N}$ we have 
$$
W_{0}^{m,p}(0,T;X)=\mathcal{D}(B^{m})=\{B^{-m}v\, |\, v\in L^{p}(0,T;X)\}.
$$
Finally, by \eqref{intemb}, \eqref{BRND}, and \eqref{inrtSob0}, we have
\begin{eqnarray}\label{BYfracemb}
\lefteqn{W_{0}^{k+s+\varepsilon,p}(0,T;X)=\{B^{-k} v \, |\, v\in W_{0}^{s+\varepsilon,p}(0,T;X)\} }\\\nonumber
&&\hookrightarrow \mathcal{D}(B^{k+s})=\{B^{-k} v \, |\, v\in \mathcal{D}(B^{s})\}\hookrightarrow \{B^{-k} v \, |\, v\in W_{0}^{s-\varepsilon,p}(0,T;X)\}=W_{0}^{k+s-\varepsilon,p}(0,T;X)
\end{eqnarray}
for all $\varepsilon>0$ sufficiently small. 

\section{the abstract schr\"odinger and wave equations}\label{Sec4}

In this section, we present two results on the existence and uniqueness of classical solutions of \eqref{ACLSE} and \eqref{ACLWE} when $f$ lies in appropriate mixed-regularity spaces. 

\begin{theorem}[abstract Schr\"odinger equation]\label{aseth}
Let $p\in (1,\infty)$, $c,T>0$, $s,\ell\in(1,2]$, $X$ be a complex Banach space, $A$ a linear operator in $X$, and let $B$ be the derivation operator defined in \eqref{Bdef}.\\
{\bf (i)} Assume that at least one of the following two conditions is satisfied:\\
{\bf (a)} $A\in \mathcal{P}(\frac{\pi}{2})\cap \mathcal{Z}_{c}$ and $v\in W_{0}^{1,p}(0,T;X)\cap L^{p}(0,T;[X,\mathcal{D}(A^{2})]_{\ell/2})$. \\
{\bf (b)} $A\in \mathcal{S}_{c}$ and $v\in W_{0}^{1,p}(0,T;X)\cap L^{p}(0,T;\mathcal{D}(A))$.\\
Then
$$
J_{\pm}(\pm iA+B)v=v
$$
where $J_{\pm}$ is defined by
\begin{equation}\label{Jdef}
g\mapsto J_{\pm}g=\frac{1}{2\pi i}\Xint-_{i\mathbb{R}-c}(\pm iA-\lambda)^{-1}(B+\lambda)^{-1}g d\lambda.
\end{equation}
{\bf (ii)} Let $A\in \mathcal{P}(0)\cap \mathcal{Z}_{c}$, and choose $r,\gamma\in(0,1]$ such that $r+\gamma>1$. Then, for any $w\in W_{0}^{r,p}(0,T;[X,\mathcal{D}(A)]_{\gamma})$, the integral 
$$
\int_{i\mathbb{R}-c}(\pm iA-\lambda)^{-1}(B+\lambda)^{-1}wd\lambda
$$
converges absolutely with respect to the $L^{p}(0,T;X)$-norm; in particular,
\begin{equation}\label{bop1}
J_{\pm}\in \mathcal{L}( W_{0}^{r,p}(0,T;[X,\mathcal{D}(A)]_{\gamma}),L^{p}(0,T;X)).
\end{equation}
{\bf (iii)} Let $A\in \mathcal{P}(\frac{\pi}{2})\cap \mathcal{Z}_{c}$, and choose $\eta\in(0,s)$, $\nu\in(0,\ell)$ such that $\eta+\nu<s+\ell-1$. Then
\begin{eqnarray}\label{bregop2}
J_{\pm}\in \mathcal{L}(W_{0}^{s,p}(0,T;[X,\mathcal{D}(A^{2})]_{\ell/2}),W_{0}^{\eta,p}(0,T;[X,\mathcal{D}(A^{2})]_{\nu/2}));
\end{eqnarray}
in particular,
\begin{eqnarray}\label{lagoon2}
J_{\pm}\in \mathcal{L}(W_{0}^{s,p}(0,T;[X,\mathcal{D}(A^{2})]_{\ell/2}), W_{0}^{1,p}(0,T;X)\cap L^{p}(0,T;\mathcal{D}(A))).
\end{eqnarray}
Moreover, for any $f\in W_{0}^{s,p}(0,T;[X,\mathcal{D}(A^{2})]_{\ell/2})$, we have $(\pm iA+B)J_{\pm}f=f$.\\
{\bf (iv)} If $A\in \mathcal{P}(\frac{\pi}{2})\cap \mathcal{Z}_{c}$, then for any $f\in W_{0}^{s,p}(0,T;[X,\mathcal{D}(A^{2})]_{\ell/2})$, there exists a $u\in W_{0}^{1,p}(0,T;X) \cap L^{p}(0,T;\mathcal{D}(A))$ satisfying \eqref{ACLSE}; the solution $u$ depends continuously on $f$ and is given by the formula
\begin{equation}\label{expsolASE}
u=\frac{1}{2\pi}\int_{i\mathbb{R}-c}\int_{0}^{t}e^{\lambda(x-t)}((\mp iA+\lambda)^{-1}f)(x)dx d\lambda, \quad t\in[0,T).
\end{equation}
If, in addition, $A\in\mathcal{S}_{c}$, then $u$ is unique.
\end{theorem}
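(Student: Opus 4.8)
The operator $J_{\pm}$ in \eqref{Jdef} is the Da Prato--Grisvard representation \cite{DG} of the inverse of the sum $\pm iA+B$, and every part of the theorem revolves around it. The contour $i\mathbb{R}-c$ is placed on the common boundary of the two spectra: from $\pm iA-\lambda=\pm i(A\pm i\lambda)$ one sees that for $\lambda\in i\mathbb{R}-c$ the shifted parameter $\pm i\lambda$ runs along $\{|\mathrm{Im}|=c\}$, where the strip bound of $A\in\mathcal{Z}_{c}$ (and \eqref{BONZ}) is available, while $(B+\lambda)^{-1}$ is governed by \eqref{BRND}. The link to \eqref{ACLSE} is the scaling $u=-iJ_{\pm}f$, equivalently $J_{\pm}f=iu$: the identity $(\pm iA+B)J_{\pm}f=f$ from part~(iii) then reads $i(\pm iAu+u')=f$, that is $iu'\mp Au=f$, and $u(0)=0$ because $J_{\pm}f\in W_{0}^{1,p}(0,T;X)$. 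So the plan is to prove the left inverse~(i), the mapping estimates~(ii)--(iii) together with the right inverse, and finally read off~(iv).

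For part~(i) I would use the algebraic splitting $\pm iA+B=(\pm iA-\lambda)+(B+\lambda)$. Since $A$ and $B$ resolvent-commute, the integrand $(\pm iA-\lambda)^{-1}(B+\lambda)^{-1}(\pm iA+B)v$ collapses to $(B+\lambda)^{-1}v+(\pm iA-\lambda)^{-1}v$, and it remains to evaluate the two resulting principal-value integrals. The $B$-term gives $i\pi v$ by Lemma~\ref{sectproj} with $a=-c$, its decay hypothesis supplied by Lemma~\ref{decayBres} and its resolvent bound by \eqref{BRND}. For the $\pm iA$-term the substitution $\mu=-\lambda$ turns $\Xint-_{i\mathbb{R}-c}(\pm iA-\lambda)^{-1}v\,d\lambda$ into $\Xint-_{i\mathbb{R}+c}(\pm iA+\mu)^{-1}v\,d\mu=i\pi v$ via Lemma~\ref{specjct2j}, whose two alternative hypotheses are exactly cases~(a) and~(b) of part~(i). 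The two contributions add to $\frac{1}{2\pi i}(i\pi v+i\pi v)=v$.

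Parts~(ii) and~(iii) are integrand estimates. For~(ii), fix $\lambda$ on the contour and bound $\|(\pm iA-\lambda)^{-1}(B+\lambda)^{-1}w\|_{L^{p}(0,T;X)}$ by $\|(\pm iA-\lambda)^{-1}\|_{\mathcal{L}([X,\mathcal{D}(A)]_{\gamma},X)}\,\|(B+\lambda)^{-1}w\|_{L^{p}(0,T;[X,\mathcal{D}(A)]_{\gamma})}$. The first factor is $\lesssim|\lambda|^{-\theta}$ for any $\theta<\gamma$, combining \eqref{BONZ} (read off for $\pm iA$ through $\pm iA-\lambda=\pm i(A\pm i\lambda)$) with the embedding $[X,\mathcal{D}(A)]_{\gamma}\hookrightarrow\mathcal{D}(A^{\theta})$ from \eqref{intemb}; the second is $\lesssim|\lambda|^{-r}\|w\|_{W_{0}^{r,p}}$, obtained by interpolating the bounds \eqref{BRND} and \eqref{BDomdecay} along the scale \eqref{inrtSob0}. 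Choosing $\theta\in(1-r,\gamma)$, which is nonempty since $r+\gamma>1$, gives integrand decay $|\lambda|^{-(r+\theta)}$ with $r+\theta>1$, whence absolute convergence and \eqref{bop1}. Part~(iii) runs the same scheme with fractional powers on both factors: the $\ell$ units of $A$-regularity and $s$ units of $B$-regularity carried by $f$ are partly spent producing the $\nu$ and $\eta$ units of output regularity, the leftover being converted into $\lambda$-decay $|\lambda|^{-(s+\ell-\eta-\nu)}$; the hypothesis $\eta+\nu<s+\ell-1$ is precisely what keeps this exponent above $1$, where the passage from interpolation-space regularity to fractional-power resolvent decay again uses \eqref{intemb}. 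Once these estimates give absolute convergence of the integrals with $A$ or $B$ applied, the closedness of $A$ and $B$ allows repeating the splitting of~(i) inside the integral to obtain $(\pm iA+B)J_{\pm}f=f$.

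Finally, part~(iv) is assembled from the foregoing. For existence set $u=-iJ_{\pm}f$, which is formula \eqref{expsolASE}; by \eqref{lagoon2} it lies in $W_{0}^{1,p}(0,T;X)\cap L^{p}(0,T;\mathcal{D}(A))$, the right-inverse identity from~(iii) rescales to \eqref{ACLSE} as in the first paragraph, and continuous dependence is the boundedness \eqref{bregop2}. For uniqueness under the extra assumption $A\in\mathcal{S}_{c}$, the difference $w$ of two solutions solves the homogeneous problem, so $(\pm iA+B)w=0$ with $w\in W_{0}^{1,p}(0,T;X)\cap L^{p}(0,T;\mathcal{D}(A))$; this is exactly the regularity in case~(b) of part~(i), and therefore $w=J_{\pm}(\pm iA+B)w=0$. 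This also explains why $A\in\mathcal{S}_{c}$ is needed only for uniqueness: the difference is merely $\mathcal{D}(A)$-valued, so case~(a) of part~(i) is unavailable. I expect the genuine difficulty to be the sharp bookkeeping in part~(iii)---simultaneously converting the interpolation-space regularity of $f$ into fractional-power resolvent decay for $A$ and for $B$, and verifying that the produced regularity splits exactly along $\eta+\nu<s+\ell-1$ while leaving net $\lambda$-decay greater than $1$ for convergence.
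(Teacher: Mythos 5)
Your proposal is correct and follows essentially the same route as the paper's proof: part (i) by splitting the integrand into $(B+\lambda)^{-1}v+(\pm iA-\lambda)^{-1}v$ and evaluating the two principal-value integrals with Lemma~\ref{sectproj} (via Lemma~\ref{decayBres} and \eqref{BRND}) and Lemma~\ref{specjct2j} (via \eqref{A2domemb} in case (a)), parts (ii)--(iii) by converting the temporal and spatial regularity of the data into $|\lambda|$-decay of the integrand with net exponent greater than $1$, the right inverse by commuting the closed operators $A$, $B$ through the absolutely convergent integral and invoking (i), and part (iv) by $u=-iJ_{\pm}f$ together with uniqueness from (i)(b). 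The only implementation difference is that in (ii)--(iii) the paper obtains the $B$-factor decay through fractional powers of $B$ (Lemma~\ref{fracpowdec} and the auxiliary spaces $Z_{\xi,\rho}^{\varepsilon}$), whereas you get the same $|\lambda|^{-r}$ bound by real interpolation of \eqref{BRND} and \eqref{BDomdecay} along \eqref{inrtSob0}; both are valid and equivalent in effect.
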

\begin{proof}
If $A\in \mathcal{P}(0)$, then for any $\xi,\rho>0$ and $\varepsilon\in (0,\max\{\xi,\rho\})$, let the Banach space
$$
Z_{\xi,\rho}^{\varepsilon}=\{A^{\varepsilon-\rho}B^{\varepsilon-\xi}x \, | \, x\in L^{p}(0,T;X)\}
$$
be endowed with the norm
$$
y\mapsto \|y\|_{Z_{\xi,\rho}^{\varepsilon}}=\|A^{\rho-\varepsilon}B^{\xi-\varepsilon}y\|_{L^{p}(0,T;X)}.
$$
If $A\in \mathcal{P}(\frac{\pi}{2})$, then, by \eqref{extsect}, there exists a $\varphi\in (\pi/2,\pi)$ such that $A\in \mathcal{P}(\varphi)$. Hence, by \cite[Theorem 15.2.7]{HNVW1} or \cite[Lemma 3.6]{RS3} we obtain that $A^{2}\in \mathcal{P}(0)$, which, combined with \eqref{intemb}, implies 
\begin{equation}\label{A2domemb}
\mathcal{D}(A^{\ell+2\delta})=\mathcal{D}((A^{2})^{\frac{\ell}{2}+\delta}) \hookrightarrow [X,\mathcal{D}(A^{2})]_{\ell/2}\hookrightarrow \mathcal{D}((A^{2})^{\frac{\ell}{2}-\delta})=\mathcal{D}(A^{\ell-2\delta})
\end{equation}
for all $\delta>0$, where in the above equalities we used \cite[Theorem 15.2.7]{HNVW1} once more.

{\bf (i)} By Lemma \ref{specjct2j}(i) in case of (a) and Lemma \ref{specjct2j}(ii) in case of (b), combined with \eqref{A2domemb}, we have
$$
\Xint-_{i\mathbb{R}+c}(\pm iA+\lambda)^{-1}vd\lambda=i\pi v.
$$
Also, Lemma \ref{sectproj}, together with \eqref{BRND} or Lemma \ref{decayBres}, imply
$$
\Xint-_{i\mathbb{R}-c}(B+\lambda)^{-1}vd\lambda=i\pi v.
$$
Therefore,
\begin{eqnarray*}
u&=&\frac{1}{2\pi i}\Xint-_{i\mathbb{R}+c}(\pm iA+\lambda)^{-1}vd\lambda+\frac{1}{2\pi i}\Xint-_{i\mathbb{R}-c}(B+\lambda)^{-1}vd\lambda\\
&=&\frac{1}{2\pi i}\Xint-_{i\mathbb{R}-c}(\pm iA-\lambda)^{-1}(B+\lambda)^{-1}(\pm iA-\lambda+B+\lambda)vd\lambda.
\end{eqnarray*}

{\bf (ii)} For $\varepsilon>0$ sufficiently small and any $h\in Z_{r,\gamma}^{\varepsilon}$, we have
\begin{equation}\label{intJconvabs}
J_{\pm}h=\mp\frac{1}{2\pi}\int_{i\mathbb{R}-c}(A\pm i\lambda)^{-1}A^{-\gamma+\varepsilon}(B+\lambda)^{-1}B^{-r+\varepsilon}B^{r-\varepsilon}A^{\gamma-\varepsilon}hd\lambda,
\end{equation}
and the integral converges absolutely by Lemma \ref{fracpowdec}, \eqref{BONZ}, and \eqref{BRND}; in particular, 
\begin{equation}\label{BoundMaptoZ}
J_{\pm}\in \mathcal{L}(Z_{r,\gamma}^{\varepsilon}, L^{p}(0,T;X)).
\end{equation}
Moreover, by \eqref{intemb} and \eqref{BYfracemb}, we have 
\begin{equation}\label{emb234}
W_{0}^{r,p}(0,T;[X,\mathcal{D}(A)]_{\gamma}) \hookrightarrow W_{0}^{r,p}(0,T;\mathcal{D}(A^{\gamma-\varepsilon})) \hookrightarrow Z_{r,\gamma}^{\varepsilon}.
\end{equation}
Hence, the result follows by \eqref{intJconvabs}, \eqref{BoundMaptoZ}, and \eqref{emb234}.

{\bf (iii)} By \eqref{BYfracemb} and \eqref{A2domemb}, we obtain 
\begin{equation}\label{powg1}
W_{0}^{s,p}(0,T;[X,\mathcal{D}(A^{2})]_{\ell/2})\hookrightarrow W_{0}^{s,p}(0,T;\mathcal{D}(A^{\ell-\varepsilon})) \hookrightarrow Z_{s,\ell}^{\varepsilon}
\end{equation}
for all $\varepsilon>0$ sufficiently small. For any $\eta<\nu_{0}<s$ and $\nu<\eta_{0}<\ell$ such that $\eta_{0}+\nu_{0}<s+\ell-1$, the map 
$$
A^{\eta_{0}}B^{\nu_{0}} : Z_{s,\ell}^{\varepsilon} \rightarrow Z_{s-\nu_{0},\ell-\eta_{0}}^{\varepsilon}
$$
is bounded, and, by \eqref{intJconvabs}, for any $\psi\in Z_{s,\ell}^{\varepsilon}$ the integrals $J_{\pm}\psi$ and $J_{\pm}A^{\eta_{0}}B^{\nu_{0}}\psi$ converges absolutely. Thus, by \eqref{powg1}, we obtain
\begin{equation}\label{fstpbopJ}
J_{\pm}\in \mathcal{L}(W_{0}^{s,p}(0,T;[X,\mathcal{D}(A^{2})]_{\ell/2}), Z_{\eta_{0},\nu_{0}}^{\varepsilon}).
\end{equation}
Furthermore, by \eqref{BYfracemb} and \eqref{A2domemb}, we find
$$
Z_{\eta_{0},\nu_{0}}^{\varepsilon} \hookrightarrow W_{0}^{\eta,p}(0,T;[X,\mathcal{D}(A^{2})]_{\nu/2}),
$$
which, together with \eqref{fstpbopJ}, implies \eqref{bregop2}. In particular, if we choose $(\eta,\nu)=(s-\varepsilon,\varepsilon)$ and $(\eta,\nu)=(\varepsilon,\ell-\varepsilon)$, we deduce \eqref{lagoon2} from \eqref{BYfracemb} and \eqref{A2domemb}. Moreover, by \eqref{powg1}, $A$ and $B$ induce bounded maps from $W_{0}^{s,p}(0,T;[X,\mathcal{D}(A^{2})]_{\ell/2})$ to $Z_{s,\ell-1}^{\varepsilon} $ and $Z_{s-1,\ell}^{\varepsilon}$, respectively, so that, in view of \eqref{intJconvabs}, the integrals $J_{\pm}Af$ and $J_{\pm}Bf$ converge absolutely. Therefore, $AJ_{\pm}f=J_{\pm}Af$ and $BJ_{\pm}f=J_{\pm}Bf$, which, by {\bf (i)}, implies 
$$
(\pm iA+B)J_{\pm}f=J_{\pm}(\pm iA+B)f=f.
$$

{\bf (iv)} The proof follows by (i) and (iii); the expression \eqref{expsolASE} follows from \eqref{expresdt}.
\end{proof}

We continue with a mixed derivative regularity result that will be needed in the sequel. We recall the class of UMD Banach spaces, i.e. Banach spaces that possess the {\em unconditionality of martingale differences property}; see \cite[Section III.4.4 and Section III.4.5]{Amann1} and \cite[Section 4]{HNVW} for the definition and basic properties of these spaces.

\begin{lemma}\label{fracderiv}
Let $p\in (1,\infty)$, $T>0$, and let $X$ be a UMD complex Banach space. If $A\in \mathcal{BIP}$ in $X$, then
$$
W_{0}^{2,p}(0,T;X)\cap L^{p}(0,T;\mathcal{D}(A^{2}))\hookrightarrow W_{0}^{1,p}(0,T;\mathcal{D}(A)).
$$
\end{lemma}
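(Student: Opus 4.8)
The plan is to reduce the embedding to a single mixed-derivative a priori estimate and then to establish it by realizing a suitable operator as a bounded Fourier/Laplace multiplier. Writing $B$ for the derivation operator of \eqref{Bdef} and, by slight abuse of notation, $A$ for its pointwise extension to $L^{p}(0,T;X)$, the two operators resolvent-commute, $W_{0}^{2,p}(0,T;X)=\mathcal{D}(B^{2})$, and $L^{p}(0,T;\mathcal{D}(A^{2}))=\mathcal{D}(A^{2})$. The norm of $W_{0}^{1,p}(0,T;\mathcal{D}(A))$ controls $u$, $Au$, $Bu=\partial_{t}u$, and $AB u=A\partial_{t}u$ in $L^{p}(0,T;X)$, together with the condition $u(0)=0$; all of these except $ABu$ are already dominated by membership in $\mathcal{D}(A^{2})\cap\mathcal{D}(B^{2})$ (since $\mathcal{D}(A^{2})\hookrightarrow\mathcal{D}(A)\hookrightarrow X$, $\mathcal{D}(B^{2})\hookrightarrow\mathcal{D}(B)$, and $u(0)=0$ is built into $W_{0}^{2,p}$). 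Hence it suffices to prove $\mathcal{D}(A^{2})\cap\mathcal{D}(B^{2})\subset\mathcal{D}(AB)$ together with
\[
\|ABu\|_{L^{p}(0,T;X)}\leq C\big(\|A^{2}u\|_{L^{p}(0,T;X)}+\|B^{2}u\|_{L^{p}(0,T;X)}\big),
\]
after which the embedding follows by the closed graph theorem.

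For the estimate I would first make sense of the bounded operator $(A^{2}+B^{2})^{-1}$ and reduce matters to the boundedness of $T:=AB(A^{2}+B^{2})^{-1}$, since then $ABu=T(A^{2}u+B^{2}u)$ for every $u\in\mathcal{D}(A^{2})\cap\mathcal{D}(B^{2})$. The decisive structural feature is that the derivation operator has \emph{empty spectrum}: this lets one construct $(A^{2}+B^{2})^{-1}$ and, more generally, $T$ by a Da Prato--Grisvard contour integral of the type $\frac{1}{2\pi i}\int_{\Gamma}(A^{2}-\lambda)^{-1}(B^{2}+\lambda)^{-1}\,d\lambda$, exactly as $J_{\pm}$ was built in Theorem \ref{aseth}, so that \emph{no angle-sum restriction linking $A$ and $B$ is needed}, only a condition on $A$ itself. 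The contour computation is guided by the factorization $(A^{2}+\xi^{2})^{-1}=(A+i\xi)^{-1}(A-i\xi)^{-1}$ and the identity $2A(A^{2}+\xi^{2})^{-1}=(A+i\xi)^{-1}+(A-i\xi)^{-1}$; on the transform side $B\leftrightarrow i\xi$, and $T=AB(A^{2}+B^{2})^{-1}$ is the multiplier operator with operator-valued symbol
\[
M(\xi)=i\xi A(A^{2}+\xi^{2})^{-1}=\tfrac{i\xi}{2}\big[(A+i\xi)^{-1}+(A-i\xi)^{-1}\big].
\]
Thus the whole problem reduces to proving that this symbol defines a bounded operator on $L^{p}(0,T;X)$.

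For this last step I would use that $L^{p}(0,T;X)$ is UMD (because $X$ is) and appeal to the operator-valued Mihlin--Weis Fourier multiplier theorem, for which it suffices that the families $\{M(\xi)\}_{\xi\neq0}$ and $\{\xi M'(\xi)\}_{\xi\neq0}$ be $R$-bounded; here the hypothesis $A\in\mathcal{BIP}$ enters, since $M(\xi)$ and $\xi M'(\xi)$ are values $h_{\xi}(A)$ of a family $\{h_{\xi}\}$ that is uniformly bounded in the holomorphic functional calculus of $A$ on an appropriate sector, and on a UMD space the bounded imaginary powers of $A$ are what allow one to pass from uniform boundedness of this calculus to the $R$-boundedness required above. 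I expect the main obstacle to be precisely this upgrade — extracting the operator-valued Mihlin ($R$-boundedness) condition from the bare BIP hypothesis — together with the prerequisite that the imaginary axis lie in $\rho(-A)$, so that $(A\pm i\xi)^{-1}$ is defined along the integration path and $A^{2}+\xi^{2}$ is boundedly invertible for all $\xi\in\mathbb{R}$; this is the one place where UMD, the functional calculus of $A$, and the admissible power-angle of $A$ must be used with care. Once $T\in\mathcal{L}(L^{p}(0,T;X))$ is established, the displayed estimate and hence the embedding follow.
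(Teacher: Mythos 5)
Your reduction to the boundedness of $AB(A^{2}+B^{2})^{-1}$ is reasonable in itself, but the multiplier you propose cannot even be constructed under the hypotheses of Lemma \ref{fracderiv}, and this is a genuine gap, not a technicality. In this paper $A\in\mathcal{BIP}$ means $A\in\mathcal{P}(0)$ together with \eqref{bipdef1}; this places only $(-\infty,0]$ (plus a small sector and ball around it) in $\rho(A)$ and puts no restriction whatsoever on how $\sigma(A)$ meets the imaginary axis. Concretely, on $X=L^{2}(\mathbb{R})$ let $A$ be multiplication by $i(1+x^{2})$: then $(1+s)\|(A+s)^{-1}\|_{\mathcal{L}(X)}\leq\sqrt{2}$ for $s\geq0$ and $\|A^{it}\|_{\mathcal{L}(X)}=e^{-\pi t/2}$, so $A\in\mathcal{BIP}$, while $\sigma(A)=i[1,\infty)$. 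Hence $(A\pm i\xi)^{-1}$ and $(A^{2}+\xi^{2})^{-1}$ simply do not exist for $|\xi|\geq1$, and your symbol $M(\xi)$ is undefined --- although the lemma itself remains true for this $A$, as the paper's proof shows. Your final step has the same defect: the promotion from BIP to the $R$-boundedness of $\{M(\xi)\}$ and $\{\xi M'(\xi)\}$ is not something ``UMD plus BIP'' provides; the available route is Dore--Venni (BIP of power angle $<\pi/2$ implies maximal regularity) combined with Weis' characterization (maximal regularity is equivalent to $R$-sectoriality of angle $>\pi/2$), and under \eqref{bipdef1} alone the power angle of $A$ can be arbitrarily large. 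Relatedly, your remark that the empty spectrum of $B$ removes any angle-sum restriction is misleading: $\|(B+is)^{-1}\|_{\mathcal{L}(L^{p}(0,T;X))}$ does not decay as $|s|\to\infty$ (test it on $u(t)=e^{-ist}$, which gives $((B+is)^{-1}u)(t)=te^{-ist}$), so $B$ behaves in any such argument like an operator of angle exactly $\pi/2$, and an angle condition on $A$ re-enters through the back door.

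The paper's proof is different in kind and sidesteps all of this by never inverting anything: it combines the abstract mixed-derivative embedding $W^{2,p}(0,T;X)\cap L^{p}(0,T;\mathcal{D}(A^{2}))\hookrightarrow W^{1,p}(0,T;[X,\mathcal{D}(A^{2})]_{\frac{1}{2}})$ of \cite[Theorem VII.7.4.2]{Amann2}, which needs only that $X$ and $\mathcal{D}(A^{2})\simeq X$ (recall $A$ is invertible) are UMD, with the identification $[X,\mathcal{D}(A^{2})]_{\frac{1}{2}}=\mathcal{D}(A)$ of \cite[Theorem 15.3.9]{HNVW1}, which is proved by Stein interpolation with the imaginary powers and is insensitive to where $\sigma(A)$ lies beyond sectoriality. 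If you wish to keep your Fourier-multiplier strategy, you must strengthen the hypothesis to something like $R$-sectoriality of $A$ of angle $<\pi/2$, or $A\in\mathcal{P}(0)\cap\mathcal{RZ}_{c}$; that is exactly the setting of Theorem \ref{LASEWP}, whose proof implements essentially your plan (Laplace transform plus \cite[Theorem 3.4]{Weis}) on contours shifted away from the spectrum. Under the hypotheses of Lemma \ref{fracderiv}, however, your argument cannot be completed.
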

\begin{proof}
Since $A$ has a bounded inverse, $\mathcal{D}(A)$ is isomorphic to $X$, so that, by \cite[Theorem III.4.5.2]{Amann1} it is also a UMD Banach space. By \cite[Theorem VII.7.4.2]{Amann2} and \cite[Theorem 15.3.9]{HNVW1}, we have
\begin{eqnarray*}
W^{2,p}(0,T;X)\cap L^{p}(0,T;\mathcal{D}(A^{2})) & \hookrightarrow & W^{1,p}(0,T;[X,\mathcal{D}(A^{2})]_{\frac{1}{2}}) \\
&=& W^{1,p}(0,T;\mathcal{D}(A)) \quad \text{(up to norm equivalence)},
\end{eqnarray*}
which shows the required embedding. 
\end{proof}

If we consider the composition $J_{+}J_{-}$ with the improper integrals defined on different vertical paths, then we can obtain for the problem \eqref{ACLWE} a result similar to Theorem \ref{aseth}, as follows.

\begin{theorem}[abstract wave equation]\label{invA2B2}
Let $p\in (1,\infty)$, $c,T>0$, $X$ be a complex Banach space, $A$ a linear operator in $X$, and let $B$ be the derivation operator defined in \eqref{Bdef}.\\
{\bf (i)} Let $A\in \mathcal{Z}_{c}$, and for any $r>c$, denote 
$$
G_{\lambda,z}=(A-i\lambda)^{-1}(A+iz)^{-1}(B+\lambda)^{-1}(B+z)^{-1}, \quad \lambda \in i\mathbb{R}-c, \, z\in i\mathbb{R}-r.
$$
Then, for any $v\in W_{0}^{2,p}(0,T;X) \cap L^{p}(0,T;\mathcal{D}(A^{2}))\cap W_{0}^{1,p}(0,T;\mathcal{D}(A))$, we have
\begin{equation}\label{invAB}
v=\frac{1}{(2\pi i)^{2}}\Xint-_{i\mathbb{R}-r}\Xint-_{i\mathbb{R}-c}G_{\lambda,z}(B^{2}+A^{2})vd\lambda dz.
\end{equation}
{\bf (ii)} Let $A\in \mathcal{P}(0)\cap\mathcal{Z}_{c}$ and $\nu\in (0,1)$. For any $w\in W_{0}^{\nu,p}(0,T;X) \cup L^{p}(0,T;[X,\mathcal{D}(A)]_{\nu})$, the integral
\begin{equation}\label{intprop}
\int_{i\mathbb{R}-c}(A^{2}+\lambda^{2})^{-1}(B+\lambda)^{-1}wd\lambda
\end{equation}
converges absolutely with respect to the $L^{p}(0,T;X)$-norm; in particular, the map 
$$
\eta \mapsto L\eta=\int_{i\mathbb{R}-c}(A^{2}+\lambda^{2})^{-1}(B+\lambda)^{-1}\eta d\lambda
$$ 
satisfies 
\begin{equation}\label{Lmapbounded}
L\in \mathcal{L}(W_{0}^{\nu,p}(0,T;X) , L^{p}(0,T;X)) \cap \mathcal{L}(L^{p}(0,T;[X,\mathcal{D}(A)]_{\nu}), L^{p}(0,T;X)).
\end{equation}
Furthermore, if $s,\ell\in(0,1]$ are such that $s+\ell>1$, then, for any $g\in W_{0}^{s,p}(0,T;[X,\mathcal{D}(A)]_{\ell})$, we have
\begin{equation}\label{inteq}
Lg=\frac{1}{2\pi i}\Xint-_{i\mathbb{R}-r}\int_{i\mathbb{R}-c}G_{\lambda,z}gd\lambda dz,
\end{equation}
where the last integral converges absolutely with respect to the $L^{p}(0,T;X)$-norm.\\
{\bf (iii)} Let $A\in \mathcal{P}(0)\cap \mathcal{Z}_{c}$, and assume that at least one of the following two conditions is satisfied:\\
{\bf (a)} $f\in W_{0}^{3,p}(0,T;\mathcal{D}(A))) \cap W_{0}^{1,p}(0,T;\mathcal{D}(A^{3}))$.\\
{\bf (b)} $X$ is UMD, $A\in \mathcal{BIP}$, and $f\in W_{0}^{2+s,p}(0,T;[X,\mathcal{D}(A)]_{\ell}) \cap W_{0}^{s,p}(0,T;[\mathcal{D}(A^{2}),\mathcal{D}(A^{3})]_{\ell})$ for some $s,\ell\in(0,1]$ such that $s+\ell>1$.\\
Then, there exists $u\in W_{0}^{2,p}(0,T;X) \cap L^{p}(0,T;\mathcal{D}(A^{2}))$ satisfying \eqref{ACLWE}; the solution $u$ depends continuously on $f$ and is given by the formula
\begin{equation}\label{expsolAWE}
u(t)=\frac{1}{2\pi i}\int_{i\mathbb{R}-c}\int_{0}^{t}e^{\lambda(x-t)}((A^{2}+\lambda^{2})^{-1}f)(x)dxd\lambda, \quad t\in [0,T).
\end{equation}
Moreover, $u$ is unique if {\em (b)} holds.
\end{theorem}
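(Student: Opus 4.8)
The plan is to invert $A^{2}+B^{2}$ through the factorisation $A^{2}+B^{2}=(iA+B)(-iA+B)=(-iA+B)(iA+B)$, valid on $\mathcal{D}(A^{2})\cap\mathcal{D}(B^{2})\cap\mathcal{D}(AB)$ since $A$ (acting in $X$) and $B$ (acting in the time variable) commute in the resolvent sense; a two-sided inverse is then the composition of the two Schr\"odinger inverses $J_{\pm}$ of Theorem \ref{aseth}, and the double contour integral of $G_{\lambda,z}$ is precisely this composition written out, with the two contours placed on the distinct lines $\mathrm{Re}=-c$ and $\mathrm{Re}=-r$, $r>c$, to separate the variables and legitimise Fubini. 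For part (i), I would mirror the computation of Theorem \ref{aseth}(i) in each variable. The key algebraic identity is $B^{2}+A^{2}=(A-i\lambda)(A+i\lambda)+(B+\lambda)(B-\lambda)$: multiplying by $(A-i\lambda)^{-1}(B+\lambda)^{-1}$ collapses $G_{\lambda,z}(B^{2}+A^{2})v$ into $(A+iz)^{-1}(B+z)^{-1}\big((A-i\lambda)^{-1}(B-\lambda)+(B+\lambda)^{-1}(A+i\lambda)\big)v$, on which the principal-value integrals of Lemma \ref{sectproj} and Lemma \ref{specjct2j} act as spectral projections, returning $i\pi$ times an identity; iterating the same reduction in the $z$-variable recovers $v$. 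The hypothesis $v\in W_{0}^{2,p}(0,T;X)\cap L^{p}(0,T;\mathcal{D}(A^{2}))\cap W_{0}^{1,p}(0,T;\mathcal{D}(A))$ is exactly what places all the intermediate vectors ($Av$, $Bv$, $A^{2}v$) in the domains required by those lemmas.

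For part (ii), the absolute convergence of \eqref{intprop} rests on the parabola-type bounds of Remark \ref{StripvsParabola}: $A\in\mathcal{Z}_{c}$ gives $\|(A^{2}+\lambda^{2})^{-1}\|_{\mathcal{L}(X)}\leq K/|\lambda|$ and $\|A(A^{2}+\lambda^{2})^{-1}\|_{\mathcal{L}(X)}\leq K$ on $i\mathbb{R}-c$, which together with \eqref{BRND}, Lemma \ref{fracpowdec} and \eqref{BONZ} control the integrand, while the fractional regularity $w\in W_{0}^{\nu,p}(0,T;X)$ or $w\in L^{p}(0,T;[X,\mathcal{D}(A)]_{\nu})$ supplies the extra factor $|\lambda|^{-\nu}$ needed for integrability over the line, giving \eqref{Lmapbounded}. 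For the reduction \eqref{inteq}, I would perform the $z$-integration explicitly: using the resolvent identity $(B+\lambda)^{-1}(B+z)^{-1}=(z-\lambda)^{-1}\big((B+\lambda)^{-1}-(B+z)^{-1}\big)$ and the fact that $z\mapsto(A+iz)^{-1}$ is analytic and bounded on $\{\mathrm{Re}(z)\leq-c\}$ (since $A\in\mathcal{Z}_{c}$ forces $iz\in\rho(-A)$ there), a Cauchy-type argument over the strip between $i\mathbb{R}-r$ and $i\mathbb{R}-c$ collapses the $z$-integral and supplies the complementary factor $(A+i\lambda)^{-1}$; as $(A-i\lambda)^{-1}(A+i\lambda)^{-1}=(A^{2}+\lambda^{2})^{-1}$, the double integral reduces to \eqref{intprop}. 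The separation $r>c$ provides the analytic strip for this step, and the constraint $s+\ell>1$ is what renders both the iterated and the reduced integrals absolutely convergent, in analogy with $r+\gamma>1$ in Theorem \ref{aseth}(ii). I expect this step — simultaneously proving convergence and justifying the collapse of the iterated integral while keeping the two contours separated — to be the main obstacle.

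For part (iii), I would set $u=\tfrac{1}{2\pi i}Lf$, which coincides with \eqref{expsolAWE} after inserting \eqref{expresdt} for $(B+\lambda)^{-1}$. Since $A$ and $B$ commute with the resolvents in $L$, one has $B^{2}u=\tfrac{1}{2\pi i}LB^{2}f$ and $A^{2}u=\tfrac{1}{2\pi i}LA^{2}f$, so to land $u$ in $W_{0}^{2,p}(0,T;X)\cap L^{p}(0,T;\mathcal{D}(A^{2}))=\mathcal{D}(B^{2})\cap\mathcal{D}(A^{2})$ it suffices that $B^{2}f$ and $A^{2}f$ lie in a space on which $L$ maps into $L^{p}(0,T;X)$ by \eqref{Lmapbounded}; assumption (a) supplies this directly through its two intersected integer-order spaces, one carrying the surplus $B$-regularity and the other the surplus $A$-regularity, whereas assumption (b) achieves the same with fractional mixed regularity using $A\in\mathcal{BIP}$, the identification $[X,\mathcal{D}(A^{2})]_{1/2}=\mathcal{D}(A)$, and Lemma \ref{fracderiv}. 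That $u$ solves \eqref{ACLWE}, i.e. $(B^{2}+A^{2})u=f$, follows by commuting $B^{2}+A^{2}$ through $L$ via $B^{2}+A^{2}=(A^{2}+\lambda^{2})+(B+\lambda)(B-\lambda)$ and the projection identity $\Xint-_{i\mathbb{R}-c}(B+\lambda)^{-1}\,d\lambda=i\pi\,\mathrm{id}$ from Lemma \ref{sectproj} together with the corresponding identity for $(A^{2}+\lambda^{2})^{-1}$. Finally, for uniqueness under (b): if $u\in W_{0}^{2,p}(0,T;X)\cap L^{p}(0,T;\mathcal{D}(A^{2}))$ satisfies $(B^{2}+A^{2})u=0$, then Lemma \ref{fracderiv} (where UMD and $\mathcal{BIP}$ enter) yields the extra regularity $u\in W_{0}^{1,p}(0,T;\mathcal{D}(A))$ needed to invoke part (i), whence $u=0$; under (a) this embedding is unavailable, which is precisely why uniqueness is asserted only in case (b).
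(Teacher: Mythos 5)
Your plan for part (i) contains a genuine gap, and it is not a repairable technicality. After your algebraic reduction (which is correct: the terms $\pm iv$ cancel and the inner integrand becomes $(A+iz)^{-1}(B+z)^{-1}\big[(B+\lambda)^{-1}-(iA+\lambda)^{-1}\big](A-iB)v$), the $\lambda$-integration requires two principal-value identities: one for $B$, which Lemma \ref{sectproj} indeed supplies, and one of the form $\Xint-_{i\mathbb{R}-c}(iA+\lambda)^{-1}(A-iB)v\,d\lambda=\mp i\pi(A-iB)v$, i.e.\ a Lemma \ref{specjct2j}-type projection for $A$. But part (i) of the theorem assumes only $A\in\mathcal{Z}_{c}$: neither $A\in\mathcal{P}(0)\cap\mathcal{Z}_{c}$ nor $A\in\mathcal{S}_{c}$ is available, so Lemma \ref{specjct2j} simply does not apply. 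Moreover, even granting sectoriality, its case (i) requires the vector to lie in $\mathcal{D}(A^{1+\theta})$, whereas $(A-iB)v$ lies only in $\mathcal{D}(A)$ under the stated hypothesis on $v$ (it would need $v\in\mathcal{D}(A^{2+\theta})$-type regularity); so your assertion that the hypothesis on $v$ ``is exactly what places all the intermediate vectors in the domains required by those lemmas'' is false. The obstruction is analytic, not bookkeeping: for a bare strip-type operator the resolvent on the lines bounding the strip is merely bounded, with no decay, so after the regularization $(iA+\lambda)^{-1}w=\lambda^{-1}w-\lambda^{-1}(iA+\lambda)^{-1}iAw$ the remainder is only $O(1/|\lambda|)$ — not absolutely integrable, and not removable by closing a contour, since the arc contribution stays $O(1)$. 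This loss of generality propagates: in part (iii)(b) the paper applies (i) to $v=f$, where $Bf$ has only $\mathcal{D}(A^{\ell-\varepsilon})$-regularity, so a version of (i) needing $\mathcal{D}(A^{1+\theta})$-vectors (or $\mathcal{S}_{c}$) cannot support case (b) at all.

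The paper's proof of (i) is built precisely to avoid any projection identity for $A$: it expands $G_{\lambda,z}$ by partial fractions \eqref{Qexp} into four terms, each carrying the scalar kernel $(\lambda^{2}-z^{2})^{-1}$, and every integral in \eqref{trm1}--\eqref{trm4} is evaluated either as a residue of that scalar kernel (at $z=0$ or $\lambda=z$) or as a Cauchy-vanishing integral closed into the half-plane where the relevant resolvent is analytic and controlled — to the right of $i\mathbb{R}-r$ for $(B+z)^{-1}$, to the left for $(B-z)^{-1}$ and $(\pm iA+z)^{-1}$ — with all decay coming from the kernel together with \eqref{decayto0}/\eqref{BDomdecay} applied to $\mathcal{D}(A)$- and $\mathcal{D}(B)$-vectors. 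That two-contour, four-term mechanism is the key idea your proposal lacks. A related problem appears in your sketch for \eqref{inteq}: collapsing the $z$-integral must pick up the pole at $z=\lambda$, which lies to the \emph{right} of $i\mathbb{R}-r$; closing rightward sweeps through the vertical strip $\{|\mathrm{Re}(z)|<c\}$, where $(A+iz)^{-1}$ need not exist, while closing leftward is blocked by the exponential growth of $(B+z)^{-1}$ in the left half-plane. The paper avoids both traps by shifting the $\lambda$-contour of the single integral $L$ from $i\mathbb{R}-c$ to $i\mathbb{R}-r$ (legitimate, since everything stays in $\{\mathrm{Re}\leq-c\}$), matching the resulting terms against \eqref{Qexp} in \eqref{threeterms}, and killing the leftover term by the analyticity argument \eqref{fourthterm}. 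You correctly identified this as the main obstacle, but your proposed mechanism does not overcome it. Finally, the continuous dependence of $u$ on $f$ asserted in part (iii) — proved in the paper through explicit estimates on $A^{2}Lf$ and $B^{2}Lf$ — is never addressed in your proposal.
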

\begin{proof}
Note that the integration in \eqref{intprop} and \eqref{expsolAWE} is well-defined due to Remark \ref{StripvsParabola}. The proof of (i) and (ii) is based in the ideas in the proof of \cite[Theorem 4.1]{Roi1} and follows similar steps. For any $\lambda\in i\mathbb{R}-c$ and $z\in i\mathbb{R}-r$, we have
\begin{eqnarray}\nonumber
G_{\lambda,z}&=&(-iA-\lambda)^{-1}(iA-z)^{-1}(B+\lambda)^{-1}(B+z)^{-1}\\\nonumber
&=&\frac{1}{z+\lambda}\big((-iA-\lambda)^{-1}+(iA-z)^{-1}\big)\frac{1}{z-\lambda}\big((B+z)^{-1}-(B+\lambda)^{-1}\big)\\\nonumber
&=&\frac{1}{\lambda^{2}-z^{2}}(iA+\lambda)^{-1}(B+z)^{-1}+\frac{1}{z^{2}-\lambda^{2}}(iA+\lambda)^{-1}(B+\lambda)^{-1}\\\label{Qexp}
&&+\frac{1}{\lambda^{2}-z^{2}}(-iA+z)^{-1}(B+z)^{-1}+\frac{1}{z^{2}-\lambda^{2}}(-iA+z)^{-1}(B+\lambda)^{-1}.
\end{eqnarray}

{\bf (i)} By \eqref{decayto0}, recall that
$$
H_{c}\subset \rho(\pm iA) \quad \text{and} \quad |\lambda|\|(\pm iA+\lambda)^{-1}\|_{\mathcal{L}(\mathcal{D}(A),X)}\leq K, \quad \lambda\in H_{c},
$$
for some $K>0$, where $H_{c}$ is defined in \eqref{StrVSParareas}. By Cauchy's theorem and \eqref{BDomdecay}, we have
\begin{eqnarray}\nonumber
\frac{i}{2}B^{-1}Av+\frac{1}{2}v&=&-\frac{1}{2\pi i}\Xint-_{i\mathbb{R}-r}\frac{1}{2z}(B+z)^{-1}(iA+B)vdz\\\nonumber
&&+ \frac{1}{2\pi i}\Xint-_{i\mathbb{R}-r}\frac{1}{2z}(iA+z)^{-1}(iA+B)vdz \\\nonumber
&=&\frac{1}{2\pi i}\Xint-_{i\mathbb{R}-r}\frac{1}{2z}(iA+z)^{-1}(B-iA)(B+z)^{-1}(iA+B)vdz\\\label{trm1}
&=&\frac{1}{2\pi i}\Xint-_{i\mathbb{R}-r}\frac{1}{2z}(iA+z)^{-1}(B+z)^{-1}(A^{2}+B^{2})vdz\\\nonumber
&=&\frac{1}{(2\pi i)^{2}}\Xint-_{i\mathbb{R}-r}\Xint-_{i\mathbb{R}-c}\frac{1}{\lambda^{2}-z^{2}}(iA+\lambda)^{-1}(B+z)^{-1}(A^{2}+B^{2})vd\lambda dz,
\end{eqnarray}
\begin{eqnarray}\nonumber
0&=&\frac{1}{2\pi i}\Xint-_{i\mathbb{R}-r}\frac{1}{2z}(B-z)^{-1}(iA+B)vdz\\\nonumber
&&-\frac{1}{2\pi i}\Xint-_{i\mathbb{R}-r}\frac{1}{2z}(iA+z)^{-1}(A+c)^{-1}(A+c)(iA+B)vdz\\\nonumber
&=&-\frac{1}{(2\pi i)^{2}}\Xint-_{i\mathbb{R}-r}\Xint-_{i\mathbb{R}-c}\frac{1}{z^{2}-\lambda^{2}}(B+\lambda)^{-1}(iA+B)vfd\lambda dz\\\nonumber
&&+\frac{1}{(2\pi i)^{2}}\Xint-_{i\mathbb{R}-r}\Xint-_{i\mathbb{R}-c}\frac{1}{z^{2}-\lambda^{2}}(iA+\lambda)^{-1}(iA+B)vd\lambda dz\\\nonumber
&=&\frac{1}{(2\pi i)^{2}}\Xint-_{i\mathbb{R}-r}\Xint-_{i\mathbb{R}-c}\frac{1}{z^{2}-\lambda^{2}}(iA+\lambda)^{-1}(B-iA)(B+\lambda)^{-1}(iA+B)vd\lambda dz\\\label{trm2}
&=&\frac{1}{(2\pi i)^{2}}\Xint-_{i\mathbb{R}-r}\Xint-_{i\mathbb{R}-c}\frac{1}{z^{2}-\lambda^{2}}(iA+\lambda)^{-1}(B+\lambda)^{-1}(A^{2}+B^{2})vd\lambda dz,
\end{eqnarray}
\begin{eqnarray}\nonumber
-\frac{i}{2}B^{-1}Av+\frac{1}{2}v&=&-\frac{1}{2\pi i}\Xint-_{i\mathbb{R}-r}\frac{1}{2z}(B+z)^{-1}(-iA+B)vdz\\\nonumber
&&+\frac{1}{2\pi i}\Xint-_{i\mathbb{R}-r}\frac{1}{2z}(-iA+z)^{-1}(-iA+B)vdz\\\nonumber
&=&\frac{1}{(2\pi i)^{2}}\Xint-_{i\mathbb{R}-r}\Xint-_{i\mathbb{R}-c}\frac{1}{z^{2}-\lambda^{2}}(B+z)^{-1}(-iA+B)vd\lambda dz\\\nonumber
&&-\frac{1}{(2\pi i)^{2}}\Xint-_{i\mathbb{R}-r}\Xint-_{i\mathbb{R}-c}\frac{1}{z^{2}-\lambda^{2}}(-iA+z)^{-1}(-iA+B)vd\lambda dz\\\nonumber
&=&\frac{1}{(2\pi i)^{2}}\Xint-_{i\mathbb{R}-r}\Xint-_{i\mathbb{R}-c}\frac{1}{\lambda^{2}-z^{2}}(-iA+z)^{-1} (iA+B)(B+z)^{-1}(-iA+B)vd\lambda dz\\\label{trm3}
&=&\frac{1}{(2\pi i)^{2}}\Xint-_{i\mathbb{R}-r}\Xint-_{i\mathbb{R}-c}\frac{1}{\lambda^{2}-z^{2}}(-iA+z)^{-1}(B+z)^{-1}(A^{2}+B^{2})vd\lambda dz,
\end{eqnarray}
and
\begin{eqnarray}\nonumber
0&=&\frac{1}{2\pi i}\Xint-_{i\mathbb{R}-r}\frac{1}{2z}(iA-z)^{-1}(B-z)^{-1}(A^{2}+B^{2})vdz\\\label{trm4}
&=&\frac{1}{(2\pi i)^{2}}\Xint-_{i\mathbb{R}-r}\Xint-_{i\mathbb{R}-c}\frac{1}{z^{2}-\lambda^{2}}(-iA+z)^{-1}(B+\lambda)^{-1}(A^{2}+B^{2})vd\lambda dz,
\end{eqnarray}
where in the first equality of the last equation we have also used Lemma \ref{decayBres}. Then, \eqref{invAB} follows from \eqref{trm1}-\eqref{trm4}, taking into account \eqref{Qexp}.

{\bf (ii)} By \eqref{intemb}, Remark \ref{StripvsParabola}, and \eqref{inrtSob0}, we write
$$
(A^{2}+\lambda^{2})^{-1}(B+\lambda)^{-1}w=\frac{1}{2\lambda}\big((iA+\lambda)^{-1}+(-iA+\lambda)^{-1}\big)(B+\lambda)^{-1}B^{-\nu+\varepsilon}B^{\nu-\varepsilon}w
$$
or
$$
(A^{2}+\lambda^{2})^{-1}(B+\lambda)^{-1}w=\frac{1}{2\lambda}\big((iA+\lambda)^{-1}+(-iA+\lambda)^{-1}\big)A^{-\nu+\varepsilon}(B+\lambda)^{-1}A^{\nu-\varepsilon}w,
$$
$\lambda\in i\mathbb{R}-c$, for all $\varepsilon>0$ sufficiently small. Hence, by Lemma \ref{fracpowdec}, \eqref{BONZ}, and \eqref{BRND}, the integral \eqref{intprop} converges absolutely, and \eqref{Lmapbounded} holds. Furthermore, by Lemma \ref{fracpowdec}, \eqref{BONZ}, \eqref{BRND}, and Cauchy's theorem, we have
\begin{eqnarray}\nonumber
\lefteqn{\frac{1}{2\pi i}\int_{i\mathbb{R}-c}(A^{2}+\lambda^{2})^{-1}(B+\lambda)^{-1}w d\lambda}\\\nonumber
&=&\frac{1}{2\pi i}\int_{i\mathbb{R}-r}\frac{1}{2z}\big((iA+z)^{-1}+(-iA+z)^{-1}\big)(B+z)^{-1}w dz\\\nonumber
&=&\frac{1}{2\pi i}\int_{i\mathbb{R}-r}\frac{1}{2z}(iA+z)^{-1}(B+z)^{-1}w dz\\\nonumber
&&+\frac{1}{2\pi i}\int_{i\mathbb{R}-r}\frac{1}{2z}(-iA+z)^{-1}(B+z)^{-1}w dz\\\nonumber
&&-\frac{1}{2\pi i}\int_{i\mathbb{R}-r}\frac{1}{2z}(-iA+z)^{-1}(B-z)^{-1}w dz\\\nonumber
&=&\frac{1}{(2\pi i)^{2}}\int_{i\mathbb{R}-r}\int_{i\mathbb{R}-c}\frac{1}{\lambda^{2}-z^{2}}(iA+\lambda)^{-1}(B+z)^{-1}w d\lambda dz\\\nonumber
&&+\frac{1}{(2\pi i)^{2}}\int_{i\mathbb{R}-r}\int_{i\mathbb{R}-c}\frac{1}{\lambda^{2}-z^{2}}(-iA+z)^{-1}(B+z)^{-1} w d\lambda dz\\\label{threeterms}
&&+\frac{1}{(2\pi i)^{2}}\int_{i\mathbb{R}-r}\int_{i\mathbb{R}-c}\frac{1}{z^{2}-\lambda^{2}}(-iA+z)^{-1}(B+\lambda)^{-1}w d\lambda dz.
\end{eqnarray}
On the other hand, by Lemma \ref{fracpowdec}, \eqref{BONZ}, \eqref{BRND}, and the dominated convergence theorem, the map 
$$
H_{r} \ni z\mapsto I_{\pm}(z)=\int_{i\mathbb{R}-c}\frac{1}{z\pm \lambda}(iA+\lambda)^{-1}(B+\lambda)^{-1}gd\lambda \in L^{p}(0,T;X)
$$
is analytic and satisfies
$$
I_{\pm}(z)\rightarrow 0 \quad \text{as} \quad |z|\rightarrow\infty, \quad z\in H_{r}.
$$
Hence, by the identity
$$
\int_{i\mathbb{R}-c}\frac{1}{z^{2}-\lambda^{2}}(iA+\lambda)^{-1}(B+\lambda)^{-1}gd\lambda=\frac{1}{2z}(I_{+}(z)+I_{-}(z)), \quad z\in H_{r},
$$
and Cauchy's theorem, we obtain
\begin{equation}\label{fourthterm}
\frac{1}{(2\pi i)^{2}}\Xint-_{i\mathbb{R}-r}\int_{i\mathbb{R}-c}\frac{1}{z^{2}-\lambda^{2}}(iA+\lambda)^{-1}(B+\lambda)^{-1}gd\lambda dz=0.
\end{equation}
Then, \eqref{inteq} follows by \eqref{Qexp}, \eqref{threeterms}, and \eqref{fourthterm}.

{\bf (iii)} If (a) holds, then $A^{2}f, B^{2}f\in W_{0}^{1,p}(0,T;\mathcal{D}(A))$. If (b) holds, then, by \eqref{BYfracemb} and complex interpolation, see, e.g., \cite[Theorem 2.6]{Lunardi}, we have that $A^{2}f, B^{2}f\in W_{0}^{s,p}(0,T;[X,\mathcal{D}(A)]_{\ell})$. Hence, in both cases, by (ii), the integrals in $LA^{2}f$, $LB^{2}f$ converge absolutely. Therefore, $Lf \in \mathcal{D}(A^{2}) \cap \mathcal{D}(B^{2})$
and 
\begin{equation}\label{AB2in}
A^{2}Lf=LA^{2}f, \quad B^{2}Lf= LB^{2}f.
\end{equation}
Hence, by (i), (ii), and Lemma \ref{fracderiv}, we obtain
\begin{eqnarray*}
\frac{1}{2\pi i}(A^{2}+B^{2})Lf&=&\frac{1}{2\pi i} L(A^{2}+B^{2})f\\
&=&\frac{1}{(2\pi i)^{2}}\Xint-_{i\mathbb{R}-r}\int_{i\mathbb{R}-c}G_{\lambda,z}(A^{2}+B^{2})fd\lambda dz=f. 
\end{eqnarray*}
The expression \eqref{expsolAWE} follows from \eqref{expresdt}. Uniqueness of solution follows by (i) and Lemma \ref{fracderiv}. Concerning the continuously dependence of the solution, note first that if (b) holds, then, by \cite[Corollary 15.3.10]{HNVW1}, up to an equivalent norm, we have
\begin{equation}\label{intopowers}
[\mathcal{D}(A^{2}),\mathcal{D}(A^{3})]_{\ell}=\mathcal{D}(A^{2+\ell}).
\end{equation}
Then, we write \eqref{AB2in} as
$$
A^{2}Lf=LA^{-1}B^{-1}A^{3}Bf, \quad B^{2}Lf= LA^{-1}B^{-1}AB^{3}f,
$$
in the case of (a), and, in the case of (b) as
$$
A^{2}Lf=LA^{-\ell}B^{-s+\varepsilon} A^{2+\ell}B^{s-\varepsilon}f, \quad B^{2}Lf= LA^{-\ell}B^{-s+\varepsilon}A^{\ell}B^{2+s-\varepsilon}f,
$$
due to \eqref{BYfracemb}, \eqref{intopowers}, and \cite[Theorem 15.3.9]{HNVW1}, where $\varepsilon>0$ is sufficiently small. Thus
$$
\|A^{2}u\|_{L^{p}(0,T;X)}+\|B^{2}u\|_{L^{p}(0,T;X)}\leq C_{0}(\|A^{3}Bf\|_{L^{p}(0,T;X)}+\|AB^{3}f\|_{L^{p}(0,T;X)})
$$
in the case of (a) and
$$
\|A^{2}u\|_{L^{p}(0,T;X)}+\|B^{2}u\|_{L^{p}(0,T;X)} \leq C_{1}( \|A^{2+\ell}B^{s-\varepsilon}f\|_{L^{p}(0,T;X)}+\|A^{\ell}B^{2+s-\varepsilon}f\|_{L^{p}(0,T;X)} )
$$
in the case of (b), for all $\varepsilon>0$ sufficiently small, with constants
$$
C_{0}=\|LA^{-1}B^{-1}\|_{\mathcal{L}(L^{p}(0,T;X))} \quad \text{and} \quad C_{1}=\|LA^{-\ell}B^{-s+\varepsilon}\|_{\mathcal{L}(L^{p}(0,T;X))},
$$
due to \eqref{intemb}, \eqref{BYfracemb}, and \eqref{Lmapbounded}. The result then follows by \eqref{BYfracemb}, \eqref{intopowers}, and \cite[Theorem 15.3.9]{HNVW1}.
\end{proof}

\begin{remark}\label{convprop}
In the above theorem, we can express all the relevant data in terms of the original operator $A^{2}=\Lambda$ instead of $A$, as follows.\\
{\bf (i)} Due to \cite[Theorem 15.2.7]{HNVW1}, if $\Lambda\in \mathcal{P}(0)$, then $A\in \mathcal{P}(0)$; and, if, in addition, $\Lambda\in \mathcal{BIP}$, then $A\in \mathcal{BIP}$ as well. In this case, the operator $\Lambda^{\frac{1}{2}}$, defined in the sense of fractional powers of sectorial operators, coincides with $A$. Moreover, by Remark \ref{StripvsParabola}, if $\Lambda\in \mathcal{V}_{c}$, then $A\in \mathcal{Z}_{c}$. \\
{\bf (ii)} If $\Lambda\in \mathcal{P}(0)$, then, due to \cite[(I.2.5.2) and (I.2.9.6)]{Amann1}, we have
$$
[X,\mathcal{D}(\Lambda)]_{\frac{1}{2}+\varepsilon} \hookrightarrow \mathcal{D}(A) \hookrightarrow[X,\mathcal{D}(\Lambda)]_{\frac{1}{2}-\varepsilon}, \quad [X,\mathcal{D}(\Lambda)]_{\frac{\nu}{2}+\varepsilon} \hookrightarrow [X,\mathcal{D}(A)]_{\nu} \hookrightarrow [X,\mathcal{D}(\Lambda)]_{\frac{\nu}{2}-\varepsilon}
$$
and
$$
[X,\mathcal{D}(\Lambda^{2})]_{\frac{3}{4}+\varepsilon} \hookrightarrow \mathcal{D}(A^{3}) \hookrightarrow [X,\mathcal{D}(\Lambda^{2})]_{\frac{3}{4}-\varepsilon}
$$
for all $\varepsilon>0$ sufficiently small. If, in addition, $\Lambda \in \mathcal{BIP}$, then 
$$
\mathcal{D}(A) = [X,\mathcal{D}(\Lambda)]_{\frac{1}{2}}, \quad [X,\mathcal{D}(A)]_{\nu} = [X,\mathcal{D}(\Lambda)]_{\frac{\nu}{2}}, \quad \text{and} \quad \mathcal{D}(A^{3})=[X,\mathcal{D}(\Lambda^{2})]_{\frac{3}{4}},
$$
up to equivalent norms, due to \cite[Theorem 15.3.9]{HNVW1}. \\
{\bf (iii)} If $\Lambda\in\mathcal{P}(\frac{\pi}{2})$, then, by \cite[(I.2.5.2) and (I.2.9.6)]{Amann1}, \cite[Corollary 7.3 (c)]{Haase1}, and \cite[Lemma 3.6]{RS3}, we have
$$
[X,\mathcal{D}(\Lambda^{2})]_{\frac{2+\ell}{4}+\varepsilon} \hookrightarrow [\mathcal{D}(A^{2}),\mathcal{D}(A^{3})]_{\ell} \hookrightarrow [X,\mathcal{D}(\Lambda^{2})]_{\frac{2+\ell}{4}-\varepsilon}
$$
for all $\varepsilon>0$ sufficiently small. If, in addition, $\Lambda \in \mathcal{BIP}$, then
$$
[\mathcal{D}(A^{2}),\mathcal{D}(A^{3})]_{\ell} = [X,\mathcal{D}(\Lambda^{2})]_{\frac{2+\ell}{4}},
$$
with equivalent norms, due to \cite[Theorem 15.2.7 and Corollary 15.3.10]{HNVW1} and \cite[Lemma 3.6]{RS3}. \mbox{\ } \hfill $\Diamond$
\end{remark}

\section{On the closure of $\pm iA+B$}\label{Sec5}

Based on the discussion at the beginning of Section \ref{Sec2}, denote by $\overline{\pm iA+B}$ the closure of 
\begin{equation}\label{iAplusB}
\pm iA+B:W_{0}^{1,p}(0,T;X)\cap L^{p}(0,T;\mathcal{D}(A)) \rightarrow L^{p}(0,T;X).
\end{equation}
We start by investigating some mapping and inversion properties of an appropriate extension of the operator $J_{\pm}$ in \eqref{Jdef}. The main tool for obtaining our extension is the theory of operator-valued Fourier multipliers.

\begin{theorem}\label{LASEWP}
Let $p\in (1,\infty)$, $c,T>0$, $X$ be a UMD complex Banach space, $A\in \mathcal{P}(0)\cap \mathcal{RZ}_{c}$ in $X$, and let $B$ be the derivation operator defined in \eqref{Bdef}. Then the operator $J_{\pm}$ in \eqref{bop1} admits an extension to a bounded map
$$
\text{from} \quad W_{0}^{1,p}(0,T;X) \quad \text{to} \quad L^{p}(0,T;X) \quad \text{and} \quad \text{from} \quad L^{p}(0,T;\mathcal{D}(A)) \quad \text{to} \quad L^{p}(0,T;X);
$$
the two extensions coincide on $W_{0}^{1,p}(0,T;X)\cap L^{p}(0,T;\mathcal{D}(A))$, and we continue to denote them by $J_{\pm}$ on this space. Moreover, there exists a $C>0$ such that
\begin{equation}\label{Jbound}
\|J_{\pm}g\|_{ L^{p}(0,T;X)}\leq C(\|g\|_{ L^{p}(0,T;X)}+\|(\mp iA+B)g\|_{ L^{p}(0,T;X)}\|)
\end{equation}
for any $g\in W_{0}^{3,p}(0,T;\mathcal{D}(A^{3}))$. In particular, $J_{\pm}$ extends further to a bounded map from $\mathcal{D}(\overline{\mp iA+B})$ to $L^{p}(0,T;X)$; we continue to denote this extension by $J_{\pm}$. If, in addition, $A\in \mathcal{P}(\frac{\pi}{2})$, then, for any $f \in \mathcal{D}(\overline{\mp iA+B})$, we have $J_{\pm}f \in \mathcal{D}(\overline{\pm iA+B})$ and
\begin{equation}\label{ElRinvJ}
\overline{\pm iA+B}J_{\pm}f=f.
\end{equation}
Furthermore,
\begin{equation}\label{linvcl}
J_{\pm}\overline{\pm iA+B}w=w \quad \text{for all} \quad w\in \mathcal{D}(\overline{\pm iA+B}) \quad \text{such that} \quad \overline{\pm iA+B}w \in \mathcal{D}(\overline{\mp iA+B}).
\end{equation}
\end{theorem}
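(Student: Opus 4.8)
The plan is to realise $J_{\pm}$ as an operator-valued Fourier multiplier and to read off every assertion from the boundedness of a handful of explicit symbols. Writing functions on $(0,T)$ as zero-extended functions and using the vector-valued Laplace transform, the resolvent formula \eqref{expresdt} shows that $(B+\lambda)^{-1}$ has Laplace symbol $(\mu+\lambda)^{-1}$ and that $B$ has symbol $\mu$; evaluating the contour integral \eqref{Jdef} by residues (closing the path $i\mathbb{R}-c$ to the left, which is legitimate since $\sigma(\pm iA)$ lies in the strip $|\mathrm{Re}|<c$) then identifies the symbol of $J_{\pm}$ with the resolvent $(\pm iA+\mu)^{-1}$, valid for $\mathrm{Re}(\mu)>c$. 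Fixing a vertical line $\mathrm{Re}(\mu)=\sigma>c$ and conjugating by $e^{\pm\sigma t}$ (a bounded operation on the finite interval) turns multiplication by a symbol $m(\sigma+i\cdot)$ into a Fourier multiplier on $L^{p}(\mathbb{R};X)$; since $X$ is UMD and $p\in(1,\infty)$, boundedness will follow from the operator-valued Mihlin theorem of Weis \cite{Weis}, i.e. from $R$-boundedness of $\{m(\mu)\}$ and $\{\mu m'(\mu)\}$ along the line.

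First I would treat the two extensions and the bound \eqref{Jbound} by checking three symbols. Because $A$ is invertible, $J_{\pm}\colon L^{p}(0,T;\mathcal{D}(A))\to L^{p}(0,T;X)$ is equivalent to $L^{p}$-boundedness of $J_{\pm}A^{-1}$, with symbol $(\pm iA+\mu)^{-1}A^{-1}$; likewise $J_{\pm}\colon W_{0}^{1,p}(0,T;X)\to L^{p}$ is equivalent to $L^{p}$-boundedness of $J_{\pm}B^{-1}=B^{-1}J_{\pm}$, with symbol $\mu^{-1}(\pm iA+\mu)^{-1}$. The hypothesis $A\in\mathcal{RZ}_{c}$ gives $R$-boundedness of the resolvent family $\{(\pm iA+\mu)^{-1}\}$ on $\mathrm{Re}(\mu)=\sigma$ (via $\mu\mapsto\mp i\mu\in Z_{c}$), and the Mihlin conditions reduce to the permanence of $R$-boundedness under products, sums and multiplication by the fixed bounded operator $A^{-1}$; in particular the only potentially dangerous term, $\mu\,\partial_{\mu}\big((\pm iA+\mu)^{-1}A^{-1}\big)=-\mu(\mu\pm iA)^{-2}A^{-1}$, is tamed by the identity
\[
\mu(\mu\pm iA)^{-2}A^{-1}=(\mu\pm iA)^{-1}A^{-1}\mp i(\mu\pm iA)^{-2},
\]
whose right-hand side is visibly $R$-bounded, so that no $R$-sectoriality of $A$ is needed. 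The two extensions agree with the original $J_{\pm}$ on the dense intersection and hence with each other. For \eqref{Jbound} I would use the symbol identity $(\mu\pm iA)^{-1}(\mu\mp iA)=2\mu(\mu\pm iA)^{-1}-I$, which translates into the operator identity
\[
2BJ_{\pm}=I+J_{\pm}(\mp iA+B),\qquad\text{equivalently}\qquad J_{\pm}=\tfrac{1}{2}B^{-1}+\tfrac{1}{2}\,(B^{-1}J_{\pm})(\mp iA+B),
\]
valid on $W_{0}^{3,p}(0,T;\mathcal{D}(A^{3}))$; since $B^{-1}$ and $B^{-1}J_{\pm}$ are bounded on $L^{p}$, this is exactly \eqref{Jbound}. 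As the right-hand side of \eqref{Jbound} is the graph norm of $\mp iA+B$ and $W_{0}^{3,p}(0,T;\mathcal{D}(A^{3}))$ is a core for $\overline{\mp iA+B}$, $J_{\pm}$ extends by continuity to $\mathcal{D}(\overline{\mp iA+B})$.

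Assuming now $A\in\mathcal{P}(\tfrac{\pi}{2})$, the identity \eqref{ElRinvJ} would come from density and closedness. On the core, Theorem \ref{aseth}(iii) (applicable since $A\in\mathcal{P}(\tfrac{\pi}{2})\cap\mathcal{Z}_{c}$) gives $J_{\pm}f\in W_{0}^{1,p}(0,T;X)\cap L^{p}(0,T;\mathcal{D}(A))$ together with $(\pm iA+B)J_{\pm}f=f$. For general $f\in\mathcal{D}(\overline{\mp iA+B})$, approximate $f$ by core elements $f_{n}$ in the graph norm of $\overline{\mp iA+B}$; then $J_{\pm}f_{n}\to J_{\pm}f$ in $L^{p}$ by \eqref{Jbound} while $\overline{\pm iA+B}J_{\pm}f_{n}=f_{n}\to f$, so the closedness of $\overline{\pm iA+B}$ yields $J_{\pm}f\in\mathcal{D}(\overline{\pm iA+B})$ and $\overline{\pm iA+B}J_{\pm}f=f$.

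Finally, \eqref{linvcl} I would reduce to \eqref{ElRinvJ} plus injectivity of $\overline{\pm iA+B}$: if $w\in\mathcal{D}(\overline{\pm iA+B})$ and $h:=\overline{\pm iA+B}w\in\mathcal{D}(\overline{\mp iA+B})$, then \eqref{ElRinvJ} gives $\overline{\pm iA+B}(w-J_{\pm}h)=0$, whence $w=J_{\pm}h$ once injectivity is known. Injectivity is the closure-level version of the elementary fact that $w'=\mp iAw$, $w(0)=0$ forces $w=0$: applying the Laplace transform to an approximating sequence and using that the symbol $\mu\pm iA=\mu-(\mp iA)$ is invertible for $\mathrm{Re}(\mu)\ge c$ forces $\widehat{w}\equiv0$. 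The main obstacles are the two analytic set-up points rather than the algebra: making the Laplace-to-Fourier dictionary rigorous on the finite interval and verifying that the contour integral \eqref{Jdef} coincides with the multiplier on a dense set (so that Weis's theorem applies), and handling the $t=T$ endpoint in the injectivity argument, where the zero-extension contributes a boundary term that must be controlled using the causal (Volterra) structure of $(B+\lambda)^{-1}$ in \eqref{expresdt}.
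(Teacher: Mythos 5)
Your overall plan coincides with the paper's proof: realize $J_{\pm}$ as a Laplace/Fourier multiplier with symbol $(\mu\pm iA)^{-1}$ on a vertical line $\mathrm{Re}\,\mu=\sigma>c$, verify the Mihlin conditions from $A\in\mathcal{RZ}_{c}$ and Kahane's contraction principle, apply Weis's theorem, and then use density of the core plus closedness of $\overline{\pm iA+B}$ for \eqref{ElRinvJ}. Your symbol algebra is correct throughout: the residue computation giving $(\mu\pm iA)^{-1}$, the identity $\mu(\mu\pm iA)^{-2}A^{-1}=(\mu\pm iA)^{-1}A^{-1}\mp i(\mu\pm iA)^{-2}$ that makes $R$-sectoriality unnecessary, and the identity $2BJ_{\pm}=I+J_{\pm}(\mp iA+B)$, which is precisely the paper's \eqref{JestBMDomtoLp}. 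However, the step you defer as a ``set-up point'' is not routine, and it is where the paper spends most of its effort: on the line $\mathrm{Re}\,\lambda=-c$ the resolvent $(\pm iA-\lambda)^{-1}$ is merely bounded, so the integrand of \eqref{Jdef} decays only like $1/|\lambda|$; the contour cannot be closed and the principal value is not absolutely convergent, so ``evaluating \eqref{Jdef} by residues'' is purely formal. The paper first manufactures an extra factor $1/\lambda$ via $(B+\lambda)^{-1}B^{-1}=\frac{1}{\lambda}\bigl(B^{-1}-(B+\lambda)^{-1}\bigr)$ together with the arc operators $K_{r},W_{r}$ and Lemma \ref{decayBres} (yielding \eqref{Jexp}), and only then inverts the Laplace transform rigorously through the feebly-oscillating-function theorems of \cite{ABHN}. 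Without this device, or an equivalent one, your multiplier identification on the core remains unproved.

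The genuinely failing step is your injectivity argument for \eqref{linvcl}. For core elements $w_{k}$ the Laplace transform of the zero-extension of $(\pm iA+B)w_{k}$ equals $(\mu\pm iA)\widehat{w_{k}}(\mu)+e^{-\mu T}w_{k}(T)$, and the boundary term does not go away: $\|w_{k}(T)\|_{X}$ is controlled by $\|Bw_{k}\|_{L^{p}}$, hence by $\|Aw_{k}\|_{L^{p}}$ up to the graph norm, and along an approximating sequence for an element of $\mathcal{D}(\overline{\pm iA+B})$ the quantity $\|Aw_{k}\|_{L^{p}}$ may blow up while $\|w_{k}\|_{L^{p}}+\|(\pm iA+B)w_{k}\|_{L^{p}}$ stays bounded. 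So ``invertibility of the symbol forces $\widehat{w}\equiv 0$'' does not follow, and the appeal to the causal structure is a hope, not an argument. The paper avoids the issue entirely: subtracting its two multiplier representations (\eqref{FourJexp} for $j=0$ and $j=1$) gives the a priori bound $\|B^{-1}g\|_{L^{p}}\leq C\|(\pm iA+B)g\|_{L^{p}}$ on the core (\eqref{furthbnd}), which passes to the closure and yields injectivity at once. You could in fact reach the same bound with your own tools: apply $B^{-1}$ to the core identity $J_{\pm}(\pm iA+B)g=g$ (Theorem \ref{aseth}) and invoke the $L^{p}$-boundedness of $B^{-1}J_{\pm}$, i.e.\ of your symbol $\mu^{-1}(\mu\pm iA)^{-1}$; so the gap is fixable within your framework, but as written this part of the proof fails.
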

\begin{proof}
Let $\xi\in W_{0}^{1,p}(0,T;[X,\mathcal{D}(A)]_{\ell})$ for some $\ell\in(0,1)$. Then, by \eqref{intemb}, \eqref{resdecDA}, and \eqref{BONZ}, for any $\theta\in (0,\ell)$, we have
\begin{eqnarray}\nonumber
\lefteqn{-\int_{i\mathbb{R}-c}\frac{1}{\lambda}(\pm iA-\lambda)^{-1}A^{-\theta}(B+\lambda)^{-1}BA^{\theta}\xi d\lambda}\\\nonumber
&=&\Xint-_{i\mathbb{R}-c}\frac{1}{\lambda}(\pm iA-\lambda)^{-1}A^{-\theta}A^{\theta}\xi d\lambda-\int_{i\mathbb{R}-c}\frac{1}{\lambda}(\pm iA-\lambda)^{-1}A^{-\theta}(B+\lambda)^{-1}BA^{\theta}\xi d\lambda\\\nonumber
&=&\int_{i\mathbb{R}-c}(\pm iA-\lambda)^{-1}A^{-\theta}\frac{1}{\lambda}(B^{-1}-(B+\lambda)^{-1})BA^{\theta}\xi d\lambda\\\label{JexpD}
&=&\int_{i\mathbb{R}-c}(\pm iA-\lambda)^{-1}A^{-\theta}(B+\lambda)^{-1}B^{-1}BA^{\theta}\xi d\lambda.
\end{eqnarray}
Moreover, let $r>2c$ and consider the bounded operator
$$
L^{p}(0,T;X)\ni v \mapsto K_{r}v=-\frac{1}{2\pi i}\int_{-c-ir}^{-c+ir}\frac{1}{\lambda}(B+\lambda)^{-1}vd\lambda \in L^{p}(0,T;X).
$$
By Cauchy's theorem, we have $K_{r}=B^{-1}+W_{r}$ with 
$$
W_{r}=\frac{1}{2\pi i}\int_{L_{r}}\frac{1}{\lambda}(B+\lambda)^{-1} d\lambda \in \mathcal{L}(L^{p}(0,T;X)),
$$
where $L_{r}=\{-c+re^{i\phi}\, |\, -\pi/2\leq\phi\leq \pi/2\}$ is oriented counterclockwise. By Lemma \ref{decayBres}, we get
\begin{equation}\label{Wrto0}
\lim_{r\rightarrow\infty}W_{r}v=0 \quad \text{for all} \quad v\in L^{p}(0,T;X).
\end{equation}
If $\eta\in W_{0}^{\ell,p}(0,T;\mathcal{D}(A))$, then by Lemma \ref{fracpowdec}, \eqref{resdecDA}, \eqref{BYfracemb}, and \eqref{Wrto0}, for any $\theta\in (0,\ell)$, we have
\begin{eqnarray*}
\lefteqn{2\pi iB^{-1}\eta \pm i\int_{i\mathbb{R}-c}\frac{1}{\lambda}(\pm iA-\lambda)^{-1}(B+\lambda)^{-1}B^{-\theta}B^{\theta}A \eta d\lambda}\\
&=&2\pi iB^{-1}\eta+2\pi i\lim_{r\rightarrow\infty}W_{r}\eta\pm i\int_{i\mathbb{R}-c}\frac{1}{\lambda}(\pm iA-\lambda)^{-1}(B+\lambda)^{-1}B^{-\theta}B^{\theta}A\eta d\lambda\\
&=&-\Xint-_{i\mathbb{R}-c}\frac{1}{\lambda}(B+\lambda)^{-1}B^{-\theta}B^{\theta}\eta d\lambda \pm i\int_{i\mathbb{R}-c}\frac{1}{\lambda}(\pm iA-\lambda)^{-1}(B+\lambda)^{-1}B^{-\theta}B^{\theta}A\eta d\lambda\\
&=&\int_{i\mathbb{R}-c}\frac{1}{\lambda}(-I\pm iA(\pm iA-\lambda)^{-1})A^{-1}(B+\lambda)^{-1}B^{-\theta}B^{\theta}A\eta d\lambda\\
&=&\int_{i\mathbb{R}-c}(\pm iA-\lambda)^{-1}A^{-1}(B+\lambda)^{-1}B^{-\theta}B^{\theta}A\eta d\lambda.
\end{eqnarray*}
Combined with \eqref{JexpD}, we conclude that
\begin{equation}\label{Jexp}
J_{\pm}u=\frac{1-(-1)^{j}}{2}B^{-1}u+\frac{1}{2\pi i}\int_{i\mathbb{R}-c}\frac{1}{\lambda}(\pm iA-\lambda)^{-1}(B+\lambda)^{-1}D_{j}ud\lambda, \quad u\in F_{j}, \quad j\in\{0,1\},
\end{equation}
where $D_{0}=-B$, $D_{1}=\pm iA$, $F_{0}=W_{0}^{1,p}(0,T;[X,\mathcal{D}(A)]_{\ell})$, and $F_{1}=W_{0}^{\ell,p}(0,T;\mathcal{D}(A))$, $\ell\in (0,1)$.

For any $\lambda\in i\mathbb{R}-c$ and $j\in\{0,1\}$, extend the functions
$$
D_{j}u \quad \text{and} \quad [0,T]\ni t\mapsto ((B+\lambda)^{-1}D_{j}u)(t)\in X, \quad \text{for} \quad u\in F_{j},
$$
outside $[0,T]$ by zero, and denote by $\mathfrak{L}(\cdot)$ the vector-valued Laplace transform, see, e.g., \cite[Section 1.4]{ABHN}. We have that $\mathfrak{L}(D_{j}u)(s)$ and $\mathfrak{L}((B+\lambda)^{-1}D_{j}u)(s)$ exist for all $s\in \mathbb{C}$. In particular, by \eqref{expresdt} and \cite[Proposition 1.6.4]{ABHN}, we have
\begin{equation}\label{LtrBl}
\mathfrak{L}((B+\lambda)^{-1}D_{j}u)(s)=\frac{1}{s+\lambda}\mathfrak{L}(D_{j}u)(s) \quad \text{for all} \quad s\in \mathbb{C} \quad \text{with} \quad \mathrm{Re}(s)>c.
\end{equation}
Note that for any $\zeta \in\mathbb{R}$, the map
\begin{equation}\label{LapPf}
\{z\in\mathbb{C}\, |\, \mathrm{Re}(z)=\zeta\}\ni s\mapsto \mathfrak{L}(D_{j}u)(s)=\int_{0}^{T}e^{-sx}(D_{j}u)(x)dx\in X
\end{equation}
is bounded. For any $\gamma>c$ and $t\in [0,T]$, let
$$
\mu(r)=\int_{\gamma-ir}^{\gamma+ir}e^{st}\mathfrak{L}((B+\lambda)^{-1}D_{j}u)(s)ds, \quad r>0.
$$
By \eqref{LtrBl}, for any $\rho>r>0$, we have 
\begin{eqnarray*}
\lefteqn{\mu(\rho)-\mu(r)=\int_{\gamma-i\rho}^{\gamma-ir}\frac{e^{st}}{s+\lambda}\mathfrak{L}(D_{j}u)(s)ds+\int_{\gamma+ir}^{\gamma+i\rho}\frac{e^{st}}{s+\lambda}\mathfrak{L}(D_{j}u)(s)ds}\\
&=&\int_{-\frac{\rho}{r}}^{-1}\frac{ire^{(\gamma+irs)t}}{\gamma+irs+\lambda}\mathfrak{L}(D_{j}u)(\gamma+irs)ds+\int_{1}^{\frac{\rho}{r}}\frac{ire^{(\gamma+irs)t}}{\gamma+irs+\lambda}\mathfrak{L}(D_{j}u)(\gamma+irs)ds.
\end{eqnarray*}
Hence, by \eqref{LapPf}, we obtain
$$
\lim_{r,\rho\rightarrow\infty, \, \rho/r\rightarrow 1}\|\mu(\rho)-\mu(r)\|_{X}=0,
$$
i.e., $\mu$ is {\em feebly oscillating} in the sense of \cite[Section 4.2.B]{ABHN}. Therefore, by \cite[Theorem 4.2.5 and Theorem 4.2.21]{ABHN} and \eqref{LtrBl}, we deduce that
\begin{equation} 
((B+\lambda)^{-1}D_{j}u)(t)=\frac{1}{2\pi i}\Xint-_{i\mathbb{R}+\gamma}\frac{e^{st}}{s+\lambda}\mathfrak{L}(D_{j}u)(s)ds
\end{equation}
uniformly in $t\in [0,T]$. As a consequence \eqref{Jexp}, becomes
$$
(J_{\pm}u)(t)=\frac{1-(-1)^{j}}{2}(B^{-1}u)(t)\pm \frac{i}{(2\pi i)^{2}}\int_{i\mathbb{R}+c}\frac{1}{\lambda}(A\mp i\lambda)^{-1}\Big(\Xint-_{i\mathbb{R}+\gamma}\frac{e^{st}}{s-\lambda}\mathfrak{L}(D_{j}u)(s)ds\Big)d\lambda,
$$
$t\in [0,T]$. Hence, if we restrict to $g\in W_{0}^{3,p}(0,T;\mathcal{D}(A^{3}))$ and extend $D_{j}g$, $j\in\{0,1\}$, outside $[0,T]$ by zero, we get 
\begin{equation}\label{Jresttog}
(J_{\pm}g)(t)=\frac{1-(-1)^{j}}{2}(B^{-1}g)(t)\pm\frac{i}{(2\pi i)^{2}}\int_{i\mathbb{R}+c}\frac{1}{\lambda}(A\mp i\lambda)^{-1}A^{-1}\Big(\Xint-_{i\mathbb{R}+\gamma}\frac{e^{st}}{s-\lambda}\mathfrak{L}(AD_{j}g)(s)ds\Big)d\lambda,
\end{equation}
$t\in [0,T]$. Integration by parts in \eqref{LapPf} yields
$$
\mathfrak{L}(AD_{j}g)(s)=-\frac{e^{-sT}}{s}(AD_{j}g)(T)+\frac{1}{s}\int_{0}^{T}e^{-sx}(BAD_{j}g)(x)dx, \quad s\in i\mathbb{R}+\gamma.
$$
Note that by Young’s inequality for convolution, the second integral in \eqref{Jresttog} is uniformly bounded in $\lambda$. By the above equation, \eqref{resdecDA}, and Fubini's theorem (see, e.g., \cite[Theorem 1.1.9]{ABHN}), from \eqref{Jresttog} we obtain
\begin{eqnarray*}
(J_{\pm}g)(t)&=&\frac{1-(-1)^{j}}{2}(B^{-1}g)(t)\\
&&\mp\frac{ i}{(2\pi i)^{2}}\int_{i\mathbb{R}+\gamma}\Big(\int_{i\mathbb{R}+c}\frac{1}{\lambda(\lambda-s)}(A\mp i\lambda)^{-1}A^{-1}d\lambda\Big)e^{st}\mathfrak{L}(AD_{j}g)(s)ds,
\end{eqnarray*}
$t\in [0,T]$, and, by Cauchy's theorem, we deduce
$$
(J_{\pm}g)(t)=\frac{1-(-1)^{j}}{2}(B^{-1}g)(t) \pm \frac{1}{2\pi}\int_{i\mathbb{R}+\gamma}\frac{e^{st}}{s}(A\mp is)^{-1}\mathfrak{L}(D_{j}g)(s)ds,
$$
$t\in [0,T]$. Therefore,
\begin{equation}\label{FourJexp}
(J_{\pm}g)(t)=\frac{1-(-1)^{j}}{2}(B^{-1}g)(t)+\frac{e^{\gamma t}}{2\pi}\int_{\mathbb{R}}\frac{e^{ist}}{\pm s\mp i\gamma}(A\pm s\mp i\gamma)^{-1}\Big(\int_{\mathbb{R}}e^{-isx}e^{-\gamma x}(D_{j}g)(x)dx\Big)ds,
\end{equation}
$t\in [0,T]$. 

At this point, we apply the Mihlin-type Fourier multiplier theorem for operator-valued multiplier functions \cite[Theorem 3.4]{Weis}, due to Weis. The map
$$
\mathbb{R}\ni s\mapsto M(s)=\frac{1}{\pm s\mp i\gamma}(A\pm s\mp i\gamma)^{-1}\in \mathcal{L}(X)
$$
is differentiable with 
$$
M'(s)=\mp\frac{1}{(\pm s\mp i\gamma)^{2}}(A\pm s\mp i\gamma)^{-1} \mp\frac{1}{\pm s\mp i\gamma}(A\pm s\mp i\gamma)^{-2}, \quad s\in\mathbb{R}.
$$
Hence, by the assumption and Kahane’s contraction principle (see, e.g., \cite[Proposition 2.5]{KuWe}), we see that both
$$
\mathbb{R}\ni s\mapsto M(s) \in \mathcal{L}(X) \quad \text{and} \quad \mathbb{R}\ni s\mapsto sM'(s) \in \mathcal{L}(X)
$$
are $R$-bounded. Thus, by \eqref{FourJexp} and \cite[Theorem 3.4]{Weis}, there exists a $C_{0}>0$ such that
$$
\|J_{\pm}g\|_{L^{p}(0,T;X)}\leq C_{0}\|D_{j}g\|_{L^{p}(0,T;X)}.
$$
Therefore, by the density of $W_{0}^{3,p}(0,T;\mathcal{D}(A^{3}))$ in each of the spaces $W_{0}^{1,p}(0,T;X)$ and $L^{p}(0,T;\mathcal{D}(A))$, we infer that 
$$
J_{\pm}\in \mathcal{L}(W_{0}^{1,p}(0,T;X),L^{p}(0,T;X)) \quad \text{and} \quad J_{\pm}\in \mathcal{L}(L^{p}(0,T;\mathcal{D}(A)),L^{p}(0,T;X))
$$ 
in the sense of a unique bounded extension. Denote these two extensions, respectively, by $J_{\pm,0}$ and $J_{\pm,1}$. For any $\beta\in W_{0}^{1,p}(0,T;X)\cap L^{p}(0,T;\mathcal{D}(A))$, let $g_{k}\in W_{0}^{3,p}(0,T;\mathcal{D}(A^{3}))$, $k\in\mathbb{N}$, be such that $g_{k}\rightarrow \beta$ as $k\rightarrow \infty$ in the $W_{0}^{1,p}(0,T;X)\cap L^{p}(0,T;\mathcal{D}(A))$-topology (i.e., the maximum of the two norms). By
$$
\|J_{\pm,0}\beta-J_{\pm,1}\beta\|_{L^{p}(0,T;X)}\leq \|J_{\pm,0}\beta-J_{\pm}g_{k}\|_{L^{p}(0,T;X)}+\|J_{\pm,1}\beta-J_{\pm}g_{k}\|_{L^{p}(0,T;X)},
$$
we see that $J_{\pm,0}\beta=J_{\pm,1}\beta$. Furthermore, summing \eqref{FourJexp} for $j=0$ and $j=1$, we get 
\begin{eqnarray}\label{JestBMDomtoLp}
\lefteqn{(2J_{\pm}g)(t)}\\\nonumber
&=&(B^{-1}g)(t)+\frac{e^{\gamma t}}{2\pi}\int_{\mathbb{R}}\frac{e^{ist}}{\pm s\mp i\gamma}(A\pm s\mp i\gamma)^{-1}\Big(\int_{\mathbb{R}}e^{-isx}e^{-\gamma x}((\pm iA-B)g)(x)dx\Big)ds,
\end{eqnarray}
$t\in [0,T]$. Hence, \eqref{Jbound} follows by \cite[Theorem 3.4]{Weis}. 

To extend $J_{\pm}$ further, let $\nu\in \mathcal{D}(\overline{\mp iA+B})$ and $\nu_{k}\in W_{0}^{3,p}(0,T;\mathcal{D}(A^{3}))$, $k\in\mathbb{N}$, such that $\nu_{k}\rightarrow \nu$ and $(\mp iA+B)\nu_{k}\rightarrow \overline{\mp iA+B} \nu$ in $L^{p}(0,T;X)$ as $k\rightarrow \infty$. By \eqref{Jbound}, we have that $\{J_{\pm}\nu_{k}\}_{k\in\mathbb{N}}$ is Cauchy in $L^{p}(0,T;X)$, and hence converges to some $y\in L^{p}(0,T;X)$. Define $J_{\pm} \nu=y$. Let $\phi_{k}\in W_{0}^{3,p}(0,T;\mathcal{D}(A^{3}))$, $k\in\mathbb{N}$, such that $\phi_{k}\rightarrow \nu$ and $(\mp iA+B)\phi_{k}\rightarrow \overline{\mp iA+B}\nu$ in $L^{p}(0,T;X)$ as $k\rightarrow \infty$. Denote by $J_{\pm}\nu$ and $J'_{\pm}\nu$ the limits of $\{J_{\pm}\nu_{k}\}_{k\in\mathbb{N}}$ and $\{J_{\pm}\phi_{k}\}_{k\in\mathbb{N}}$, respectively. Then \eqref{Jbound} implies
$$ 
\|J_{\pm}(\nu_{k}-\phi_{k})\|_{ L^{p}(0,T;X)} \leq C(\|\nu_{k}-\phi_{k}\|_{ L^{p}(0,T;X)}+\|(\mp iA+B)\nu_{k}-(\mp iA+B)\phi_{k}\|_{ L^{p}(0,T;X)}),
$$
$k\in\mathbb{N}$, so that, by the inequality
$$
 \|J_{\pm}\nu-J'_{\pm}\nu\|_{ L^{p}(0,T;X)} \leq \|J_{\pm}\nu-J_{\pm}\nu_{k}\|_{ L^{p}(0,T;X)} +\|J_{\pm}(\nu_{k}-\phi_{k})\|_{ L^{p}(0,T;X)} +\|J_{\pm}\phi_{k}-J'_{\pm}\nu\|_{ L^{p}(0,T;X)},
$$
we see that the above extension is well defined. Finally, by taking the limit in \eqref{Jbound}, we obtain $J_{\pm}\in\mathcal{L}(\mathcal{D}(\overline{\mp iA+B}), L^{p}(0,T;X))$. 
 
Now, take $f \in \mathcal{D}(\overline{\mp iA+B})$ and $v_{k}\in W_{0}^{3,p}(0,T;\mathcal{D}(A^{3}))$, $k\in\mathbb{N}$, such that $v_{k}\rightarrow f$ in $ \mathcal{D}(\overline{\mp iA+B})$ as $k\rightarrow \infty$. By the boundedness of $J_{\pm}$, we get $J_{\pm}v_{k}\rightarrow J_{\pm}f$ in $L^{p}(0,T;X)$ as $k\rightarrow \infty$. On the other hand, by Theorem \ref{aseth}(iii), we have $J_{\pm}v_{k}\in \mathcal{D}(\pm iA+B)$ and
$$
(\pm iA+B)J_{\pm}v_{k}=v_{k}\rightarrow f
$$
as $k\rightarrow \infty$, so that, by the closedness of $\overline{\pm iA+B}$ in $L^{p}(0,T;X)$, we get $J_{\pm}f \in \mathcal{D}(\overline{\pm iA+B})$ and \eqref{ElRinvJ} holds.

If we take the difference between \eqref{FourJexp} for $j=0$ and $j=1$, we get 
$$
-(B^{-1}g)(t)=\frac{e^{\gamma t}}{2\pi}\int_{\mathbb{R}}\frac{e^{ist}}{\pm s\mp i\gamma}(A\pm s\mp i\gamma)^{-1}\Big(\int_{\mathbb{R}}e^{-isx}e^{-\gamma x}((\pm iA+B)g)(x)dx\Big)ds,
$$
$t\in [0,T]$. Thus, by \cite[Theorem 3.4]{Weis}, there exists a $C_{1}>0$ such that
\begin{equation}\label{furthbnd}
\|B^{-1}g\|_{L^{p}(0,T;X)}\leq C_{1}\|(\pm iA+B)g\|_{L^{p}(0,T;X)}.
\end{equation}
Let $\omega\in \mathcal{D}(\overline{\pm iA+B})$ satisfy $\overline{\pm iA+B}\omega=0$. Take $\omega_{k}\in W_{0}^{3,p}(0,T;\mathcal{D}(A^{3}))$, $k\in\mathbb{N}$, such that $\omega_{k}\rightarrow \omega$ and $(\pm iA+B)\omega_{k}\rightarrow 0$ in $L^{p}(0,T;X)$ as $k\rightarrow \infty$. Then, \eqref{furthbnd} implies $B^{-1}\omega_{k}\rightarrow 0$ as $k\rightarrow \infty$, i.e., $\omega=0$ and $\overline{\pm iA+B}$ is injective. Then, setting $f= \overline{\pm iA+B}w$ in \eqref{ElRinvJ}, we obtain 
$$
\overline{\pm iA+B}(J_{\pm}\overline{\pm iA+B}w -w)=0,
$$
which shows \eqref{linvcl}.
\end{proof}

Based on the above theorem, we can now obtain well-posedness of certain extensions of \eqref{ACLSE} and \eqref{ACLWE}, which are defined through $\overline{\pm iA+B}$; see \eqref{stabawe} and \eqref{stabase} below. The forcing term $f$ is now permitted to belong to a space that contains the space of solutions.

\begin{theorem}[abstract wave and Schr\"odinger equations]\label{AWEWP}
Let $p\in (1,\infty)$, $c,T>0$, $X$ be a UMD complex Banach space, $A\in \mathcal{P}(0)\cap\mathcal{RZ}_{c}$ in $X$, $B$ be the derivation operator defined in \eqref{Bdef}, and $J_{\pm}$ be the operator from Theorem \ref{LASEWP}. Let the space
$$
E_{0}^{\pm}=\{v\in \mathcal{D}(\overline{\pm iA+B}) \, |\, \overline{\pm iA+B}v\in \mathcal{D}(\overline{\mp iA+B})\}
$$
be endowed with the norm
$$
v\mapsto \|v\|_{E_{0}^{\pm}}=\|v\|_{L^{p}(0,T;X)}+\|\overline{\pm iA+B}v\|_{L^{p}(0,T;X)}+\|\overline{\mp iA+B}\, \overline{\pm iA+B}v\|_{L^{p}(0,T;X)},
$$ 
and let
$$
E_{2}^{\pm}=\{v\in E_{0}^{\pm} \, |\, \overline{\mp iA+B} \, \overline{\pm iA+B}v\in E_{0}^{\pm}\}.
$$
Then $E_{0}^{\pm}$ is a Banach space, and the operator
\begin{equation}\label{ABPMOP}
\overline{\mp iA+B} \, \overline{\pm iA+B}: E_{2}^{\pm}\rightarrow E_{0}^{\pm}
\end{equation}
is closed and, moreover, invertible; its inverse is $J_{\pm}J_{\mp}\in\mathcal{L}(E_{0}^{\pm},E_{2}^{\pm})$. Furthermore:\\
{\em(abstract wave equation)} For any $f\in E_{0}^{\pm}$, there exists a unique $u\in E_{2}^{\pm}$ satisfying 
\begin{equation}\label{stabawe}
\overline{\mp iA+B} \, \overline{\pm iA+B}u=f;
\end{equation}
the solution is given by $u=J_{\pm}J_{\mp}f$ and depends continuously on $f$. In addition, $u$ is unique in $E_{0}^{\pm}$, even if $f\in L^{p}(0,T;X)$. \\
{\em (abstract Schr\"odinger equation)} Let the space
$$
E_{1}^{\pm}=\{v \in \mathcal{D}(\overline{\mp iA+B}) \, |\, \overline{\mp iA+B} v\in E_{0}^{\pm}\}
$$
be endowed with the norm 
$$
v\mapsto \|v\|_{E_{1}^{\pm}}=\|v\|_{L^{p}(0,T;X)}+\|\overline{\mp iA+B}v\|_{E_{0}^{\pm}}.
$$
For any $f\in E_{0}^{\pm}$, there exists a unique $w\in E_{1}^{\pm}$ satisfying 
\begin{equation}\label{stabase}
\overline{\mp iA+B}w=f ;
\end{equation}
the solution is given by $w=J_{\mp}f$ and depends continuously on $f$. In addition, $w$ is unique in $\mathcal{D}(\overline{\mp iA+B})$, even if $f\in L^{p}(0,T;X)$.
\end{theorem}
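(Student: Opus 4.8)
The plan is to reduce the whole statement to the mapping and inversion identities already established in Theorem \ref{LASEWP}, so that what remains is essentially domain bookkeeping. Write $P=\overline{\pm iA+B}$ and $M=\overline{\mp iA+B}$, so that the operator in \eqref{ABPMOP} is $MP$ with candidate inverse $J_{\pm}J_{\mp}$. Applying Theorem \ref{LASEWP} to both sign choices, I shall use: the boundedness $J_{\pm}\in\mathcal{L}(\mathcal{D}(M),L^{p}(0,T;X))$ and $J_{\mp}\in\mathcal{L}(\mathcal{D}(P),L^{p}(0,T;X))$; the right-inverse identities $PJ_{\pm}f=f$ for $f\in\mathcal{D}(M)$ and $MJ_{\mp}f=f$ for $f\in\mathcal{D}(P)$ coming from \eqref{ElRinvJ}; the two left-inverse identities \eqref{linvcl}; and the injectivity of both $P$ and $M$.

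First I would verify that $E_{0}^{\pm}$, and with their graph norms $E_{1}^{\pm}$ and $E_{2}^{\pm}$, are Banach spaces. For a Cauchy sequence in the $E_{0}^{\pm}$-norm, the three sequences $v_{k}$, $Pv_{k}$, $MPv_{k}$ are Cauchy in $L^{p}(0,T;X)$; applying the closedness of $P$ to the first two and then of $M$ to the next two identifies the limit $v$ as an element of $\mathcal{D}(P)$ with $Pv\in\mathcal{D}(M)$, hence $v\in E_{0}^{\pm}$, with convergence in the $E_{0}^{\pm}$-norm. The same closedness argument handles $E_{1}^{\pm}$ and $E_{2}^{\pm}$.

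The core step is the two-sided inversion. For $f\in E_{0}^{\pm}$ one has $f\in\mathcal{D}(P)$, so $J_{\mp}f$ is defined and, by \eqref{ElRinvJ}, $J_{\mp}f\in\mathcal{D}(M)$ with $MJ_{\mp}f=f$; applying $J_{\pm}$ (legitimate since $J_{\mp}f\in\mathcal{D}(M)$) gives, again by \eqref{ElRinvJ}, $u:=J_{\pm}J_{\mp}f\in\mathcal{D}(P)$ with $Pu=J_{\mp}f$. Thus $Pu=J_{\mp}f\in\mathcal{D}(M)$ and $MPu=MJ_{\mp}f=f\in E_{0}^{\pm}$, so $u\in E_{2}^{\pm}$ and $MP(J_{\pm}J_{\mp}f)=f$, i.e. $J_{\pm}J_{\mp}$ is a right inverse. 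For the left inverse, take $v\in E_{2}^{\pm}$, so that $v\in\mathcal{D}(P)$, $Pv\in\mathcal{D}(M)$, $MPv\in\mathcal{D}(P)$ and $P(MPv)\in\mathcal{D}(M)$. Applying \eqref{linvcl} for the $\mp$-sign with $w=Pv$ (here $Pv\in\mathcal{D}(M)$ and $M(Pv)=MPv\in\mathcal{D}(P)$) yields $J_{\mp}(MPv)=Pv$; then \eqref{linvcl} for the $\pm$-sign with $w=v$ (here $v\in\mathcal{D}(P)$ and $Pv\in\mathcal{D}(M)$) yields $J_{\pm}(Pv)=v$, whence $J_{\pm}J_{\mp}(MPv)=v$. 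I expect this to be the one step needing genuine care, since one must peel off $M$ and then $P$ using precisely the defining membership conditions of $E_{2}^{\pm}$; there is no analytic difficulty here, only correct tracking of domains.

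For the boundedness of the inverse, the identities just proved collapse the $E_{2}^{\pm}$-norm of $u=J_{\pm}J_{\mp}f$: indeed $\|u\|_{E_{2}^{\pm}}=\|u\|_{E_{0}^{\pm}}+\|MPu\|_{E_{0}^{\pm}}=\|u\|_{E_{0}^{\pm}}+\|f\|_{E_{0}^{\pm}}$, while $\|u\|_{E_{0}^{\pm}}=\|u\|_{L^{p}(0,T;X)}+\|J_{\mp}f\|_{L^{p}(0,T;X)}+\|f\|_{L^{p}(0,T;X)}$, each term of which is bounded by $C\|f\|_{E_{0}^{\pm}}$ using $J_{\mp}\in\mathcal{L}(\mathcal{D}(P),L^{p}(0,T;X))$, $J_{\pm}\in\mathcal{L}(\mathcal{D}(M),L^{p}(0,T;X))$, and $\|J_{\mp}f\|_{\mathcal{D}(M)}=\|J_{\mp}f\|_{L^{p}(0,T;X)}+\|f\|_{L^{p}(0,T;X)}$. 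Hence $J_{\pm}J_{\mp}\in\mathcal{L}(E_{0}^{\pm},E_{2}^{\pm})$, and closedness of $MP$ follows automatically, since a bijection of Banach spaces with bounded inverse is closed. The wave-equation assertions are then immediate: $u=J_{\pm}J_{\mp}f$ solves \eqref{stabawe} and depends continuously on $f$, while any $w\in E_{0}^{\pm}$ with $MPw=0$ satisfies $Pw=0$ by injectivity of $M$ and then $w=0$ by injectivity of $P$, giving uniqueness in $E_{0}^{\pm}$ even for $f\in L^{p}(0,T;X)$. Finally, for \eqref{stabase} I would set $w=J_{\mp}f$: since $f\in\mathcal{D}(P)$, \eqref{ElRinvJ} gives $w\in\mathcal{D}(M)$ and $Mw=f\in E_{0}^{\pm}$, hence $w\in E_{1}^{\pm}$ with $\|w\|_{E_{1}^{\pm}}=\|J_{\mp}f\|_{L^{p}(0,T;X)}+\|f\|_{E_{0}^{\pm}}\le C\|f\|_{E_{0}^{\pm}}$, and uniqueness in $\mathcal{D}(M)$, hence in $E_{1}^{\pm}$, even for $f\in L^{p}(0,T;X)$, again follows from injectivity of $M$.
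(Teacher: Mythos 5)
Your proposal is correct, and its skeleton is the same as the paper's: everything is reduced to the conclusions of Theorem \ref{LASEWP} --- completeness of $E_{0}^{\pm}$ via the closedness of the two closures in $L^{p}(0,T;X)$, the right-inverse property by applying \eqref{ElRinvJ} twice, the left-inverse property by applying \eqref{linvcl} twice with exactly the domain checks you carry out (this is the paper's intermediate identity \eqref{fstpinv}), and uniqueness via injectivity, which is how the paper reads \eqref{linvcl}. Where you genuinely diverge is in the closedness and boundedness claims for \eqref{ABPMOP}, whose logical order you reverse. The paper first proves closedness of $\overline{\mp iA+B}\,\overline{\pm iA+B}$ on $E_{2}^{\pm}$ by a direct sequence argument (using \eqref{fstpinv}, the boundedness of $J_{\mp}$ on $\mathcal{D}(\overline{\pm iA+B})$, and the closedness of the two closures in $L^{p}(0,T;X)$), and only then invokes the inverse mapping theorem to conclude $J_{\pm}J_{\mp}\in\mathcal{L}(E_{0}^{\pm},E_{2}^{\pm})$. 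You instead prove boundedness by a direct estimate --- collapsing $\|J_{\pm}J_{\mp}f\|_{E_{2}^{\pm}}$ via the inversion identities together with $J_{\mp}\in\mathcal{L}(\mathcal{D}(\overline{\pm iA+B}),L^{p}(0,T;X))$ and $J_{\pm}\in\mathcal{L}(\mathcal{D}(\overline{\mp iA+B}),L^{p}(0,T;X))$ --- and then obtain closedness for free from the existence of a bounded, everywhere-defined inverse; both steps are sound and there is no circularity, since your estimate never uses closedness. Your route buys something concrete: it yields an explicit bound on $\|J_{\pm}J_{\mp}\|_{\mathcal{L}(E_{0}^{\pm},E_{2}^{\pm})}$ in terms of the operator norms from Theorem \ref{LASEWP}, which is precisely the computation the paper must redo from scratch in Lemma \ref{CTindep} to obtain uniformity in $T\in(0,T_{0}]$; the inverse-mapping-theorem route gives no such quantitative control. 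One small point to patch: injectivity of $\overline{\pm iA+B}$ and $\overline{\mp iA+B}$ is not literally among the stated conclusions of Theorem \ref{LASEWP} (it appears only inside its proof), so you should note that it follows in one line from \eqref{linvcl}: if $\overline{\pm iA+B}w=0$, then $\overline{\pm iA+B}w=0\in\mathcal{D}(\overline{\mp iA+B})$, hence $w=J_{\pm}\,\overline{\pm iA+B}\,w=0$. With that remark inserted your argument is complete; your direct verification that $w=J_{\mp}f$ lies in $E_{1}^{\pm}$ is also slightly shorter than the paper's identification of $E_{1}^{\pm}$ with the image $\{\overline{\pm iA+B}u \,|\, u\in E_{2}^{\pm}\}$, and it produces the same continuous-dependence estimate.
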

\begin{proof}
Let $\{g_{k}\}_{k\in\mathbb{N}}$ be a Cauchy sequence in $E_{0}^{\pm}$. Then $\{g_{k}\}_{k\in\mathbb{N}}$ is Cauchy in $L^{p}(0,T;X)$, and hence it converges to some $g\in L^{p}(0,T;X)$. Moreover, $\{\overline{\pm iA+B}g_{k}\}_{k\in\mathbb{N}}$ is also Cauchy in $L^{p}(0,T;X)$, and hence it converges to some $h\in L^{p}(0,T;X)$. By the closedness of $\overline{\pm iA+B}$ in $L^{p}(0,T;X)$, we have $g\in \mathcal{D}(\overline{\pm iA+B})$ and $\overline{\pm iA+B}g=h$. Furthermore, $\{\overline{\mp iA+B}\,\overline{\pm iA+B}g_{k}\}_{k\in\mathbb{N}}$ is Cauchy in $L^{p}(0,T;X)$, and thus it converges to some $\xi\in L^{p}(0,T;X)$. By the closedness of $\overline{\mp iA+B}$ in $L^{p}(0,T;X)$, we deduce that $h\in \mathcal{D}(\overline{\mp iA+B})$ and $\overline{\mp iA+B}h=\xi$ . This implies that $g\in E_{0}^{\pm}$ and
\begin{eqnarray*}
\lefteqn{\|g-g_{k}\|_{E_{0}^{\pm}}}\\
&=&\|g-g_{k}\|_{L^{p}(0,T;X)}+\|h-\overline{\pm iA+B}g_{k}\|_{L^{p}(0,T;X)}+\|\xi-\overline{\mp iA+B}\, \overline{\pm iA+B}g_{k}\|_{L^{p}(0,T;X)},
\end{eqnarray*}
$k\in\mathbb{N}$, i.e., $E_{0}^{\pm}$ is complete. 

For any $\eta\in E_{0}^{\pm}$, we have that $\overline{\mp iA+B} \, \overline{\pm iA+B}\eta$ is well-defined in $L^{p}(0,T;X)$, so that the operator in \eqref{ABPMOP} is also well-defined. Let $\phi\in E_{2}^{\pm}$. Since $\phi\in E_{0}^{\pm}$, we have that $ \overline{\pm iA+B}\phi\in \mathcal{D}(\overline{\mp iA+B} )$. Also, $\overline{\mp iA+B} \, \overline{\pm iA+B}\phi\in E_{0}^{\pm}$, and hence $\overline{\mp iA+B} \, \overline{\pm iA+B}\phi\in \mathcal{D}(\overline{\pm iA+B})$. Therefore, by \eqref{linvcl}, we have 
\begin{equation}\label{fstpinv}
J_{\mp}\overline{\mp iA+B} \, \overline{\pm iA+B}\phi= \overline{\pm iA+B}\phi.
\end{equation}
Again, by \eqref{linvcl}, we get 
$$
J_{\pm}J_{\mp}\overline{\mp iA+B} \, \overline{\pm iA+B}\phi= J_{\pm}\overline{\pm iA+B}\phi=\phi,
$$
which shows that $J_{\pm}J_{\mp}$ is a left inverse of the operator in \eqref{ABPMOP}. Also, by applying \eqref{ElRinvJ} twice, we have 
$$
\overline{\mp iA+B} \, \overline{\pm iA+B}J_{\pm}J_{\mp}f=\overline{\mp iA+B} J_{\mp}f= f \quad \text{and} \quad J_{\pm}J_{\mp}f\in E_{0}^{\pm}
$$
for any $f\in E_{0}^{\pm}$, i.e., $J_{\pm}J_{\mp}f\in E_{2}^{\pm}$, and $J_{\pm}J_{\mp}$ is also a right inverse of the operator in \eqref{ABPMOP}.

Teke $\nu,\varphi\in E_{0}^{\pm}$ and $\nu_{k}\in E_{2}^{\pm}$, $k\in\mathbb{N}$, such that $\nu_{k}\rightarrow \nu$ and $\overline{\mp iA+B} \, \overline{\pm iA+B}\nu_{k}\rightarrow \varphi$ in $E_{0}^{\pm}$ as $k\rightarrow \infty$. We have that both $\overline{\mp iA+B} \, \overline{\pm iA+B}\nu_{k}$, $k\in\mathbb{N}$, and $\varphi$ belong to $\mathcal{D}(\overline{\pm iA+B})$, and also $\overline{\mp iA+B} \, \overline{\pm iA+B}\nu_{k}\rightarrow \varphi$ in the $\mathcal{D}(\overline{\pm iA+B})$-topology, so that $J_{\mp}\overline{\mp iA+B} \, \overline{\pm iA+B}\nu_{k}\rightarrow J_{\mp}\varphi$ in $L^{p}(0,T;X)$ as $k\rightarrow \infty$ due to Theorem \ref{LASEWP}. By \eqref{fstpinv}, we obtain that $\overline{\pm iA+B}\nu_{k}\rightarrow J_{\mp}\varphi$ in $L^{p}(0,T;X)$ as $k\rightarrow \infty$. Hence:\\
(i) By the closedness of $\overline{\pm iA+B}$ in $L^{p}(0,T;X)$, we find that $\nu\in \mathcal{D}(\overline{\pm iA+B})$ and also $\overline{\pm iA+B}\nu=J_{\mp}\varphi$. \\
(ii) By the closedness of $\overline{\mp iA+B}$ in $L^{p}(0,T;X)$, we find that $J_{\mp}\varphi\in \mathcal{D}(\overline{\mp iA+B})$ and $\overline{\mp iA+B}J_{\mp}\varphi=\varphi$.\\
In particular, $\overline{\pm iA+B}\nu \in \mathcal{D}(\overline{\mp iA+B})$ and $\overline{\mp iA+B}\, \overline{\pm iA+B}\nu=\varphi$, i.e., $\nu\in E_{2}^{\pm}$ and the operator \eqref{ABPMOP} is closed. The fact that $J_{\pm}J_{\mp}\in\mathcal{L}(E_{0}^{\pm},E_{2}^{\pm})$ follows from the inverse mapping theorem, and the uniqueness of the solution of \eqref{stabawe} in $E_{0}^{\pm}$, even in the case $f\in L^{p}(0,T;X)$, follows by \eqref{linvcl}.

Due to \eqref{stabawe}, for any $f\in E_{0}^{\pm}$, there exists a solution $w=\overline{\pm iA+B}J_{\pm}J_{\mp}f$ of \eqref{stabase}. This solution belongs to 
\begin{eqnarray*}
\lefteqn{\{\overline{\pm iA+B}u \, | \, u\in E_{2}^{\pm}\}}\\ 
&=&\{\overline{\pm iA+B}u\in \mathcal{D}(\overline{\mp iA+B}) \, |\, u\in \mathcal{D}(\overline{\pm iA+B}) \,\, \text{and} \,\, \overline{\mp iA+B} \, \overline{\pm iA+B}u\in E_{0}^{\pm}\}\\
&=& E_{1}^{\pm},
\end{eqnarray*}
where the last equality follows from \eqref{ElRinvJ} and \eqref{linvcl}. The uniqueness of the solution of \eqref{stabase} in $\mathcal{D}(\overline{\mp iA+B})$, even for $f\in L^{p}(0,T;X)$, follows from \eqref{linvcl}. By \eqref{ElRinvJ}, we also have $w=J_{\mp}f$. Furthermore, \eqref{ElRinvJ} implies
\begin{eqnarray*}
\lefteqn{\|w\|_{E_{1}^{\pm}}=\|J_{\mp}f\|_{L^{p}(0,T;X)}+\|\overline{\mp iA+B}J_{\mp}f\|_{L^{p}(0,T;X)}}\\
&&+\|\overline{\pm iA+B}\, \overline{\mp iA+B}J_{\mp}f\|_{L^{p}(0,T;X)}+\|\overline{\mp iA+B} \, \overline{\pm iA+B}\, \overline{\mp iA+B}J_{\mp}f\|_{L^{p}(0,T;X)}\\
&=&\|J_{\mp}f\|_{L^{p}(0,T;X)}+\|f\|_{L^{p}(0,T;X)}+\|\overline{\pm iA+B}f\|_{L^{p}(0,T;X)}+\|\overline{\mp iA+B} \, \overline{\pm iA+B}f\|_{L^{p}(0,T;X)}\\
&\leq&C_{0}\big(\|f\|_{L^{p}(0,T;X)}+\|\overline{\pm iA+B}f\|_{L^{p}(0,T;X)}+\|\overline{\mp iA+B} \, \overline{\pm iA+B}f\|_{L^{p}(0,T;X)}\big)=C_{0}\|f\|_{E_{0}^{\pm}}
\end{eqnarray*}
for some $C_{0}>0$, where we have used Theorem \ref{LASEWP} in the inequality.
\end{proof}

In the above theorem, we obtained solutions of \eqref{stabase} through the solution operator of \eqref{stabawe}. However, based on Theorem \ref{LASEWP}, we can obtain solutions of \eqref{stabase} in a more direct way, as follows.

\begin{theorem}\label{BestScrth}
Let $p\in (1,\infty)$, $c,T>0$, $X$ be a UMD complex Banach space, $A\in \mathcal{P}(0)\cap\mathcal{RZ}_{c}$ in $X$, $B$ be the derivation operator defined in \eqref{Bdef}, and let $J_{\pm}$ be the operator from Theorem \ref{LASEWP}. Assume that 
\begin{equation}\label{DeqD}
\mathcal{D}(\overline{iA+B})=\mathcal{D}(\overline{-iA+B}) \quad \text{as sets,}
\end{equation}
and let $X_{0}^{\pm}=\mathcal{D}(\overline{\pm iA+B})$ and $X_{1}^{\pm}=\{u\in X_{0}^{\pm} \, |\, \overline{\pm iA+B}u\in X_{0}^{\pm}\}$. Then, the operator
\begin{equation}\label{astrop}
X_{1}^{\pm}\ni u\mapsto \overline{\pm iA+B}u\in X_{0}^{\pm} 
\end{equation}
is closed and invertible; its inverse is $J_{\pm}\in\mathcal{L}(X_{0}^{\pm},X_{1}^{\pm})$.
\end{theorem}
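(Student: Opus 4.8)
The plan is to show that $J_{\pm}$ is a two-sided inverse of the operator in \eqref{astrop}, by combining the inversion identities \eqref{ElRinvJ} and \eqref{linvcl} of Theorem \ref{LASEWP}; the hypothesis \eqref{DeqD} enters precisely to guarantee that $J_{\pm}$, which a priori is only defined and bounded on $\mathcal{D}(\overline{\mp iA+B})$, is available on all of $X_{0}^{\pm}$ and that the domain condition in \eqref{linvcl} is automatically met.

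First I would record the underlying functional-analytic setup. Each $X_{0}^{\pm}=\mathcal{D}(\overline{\pm iA+B})$, endowed with the graph norm, is a Banach space since $\overline{\pm iA+B}$ is closed in $L^{p}(0,T;X)$ by construction. The identity map $X_{0}^{+}\to X_{0}^{-}$ has closed graph (if $u_{k}\to u$ in $X_{0}^{+}$ and $u_{k}\to v$ in $X_{0}^{-}$, then both limits agree in $L^{p}(0,T;X)$, forcing $u=v$), so by \eqref{DeqD} and the closed graph theorem the two graph norms are equivalent on the common underlying set. Consequently the bound $J_{\pm}\in\mathcal{L}(\mathcal{D}(\overline{\mp iA+B}),L^{p}(0,T;X))$ from Theorem \ref{LASEWP} upgrades to $J_{\pm}\in\mathcal{L}(X_{0}^{\pm},L^{p}(0,T;X))$. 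Likewise, the operator in \eqref{astrop} is closed—equivalently, $X_{1}^{\pm}$ with its graph norm is complete: if $u_{k}\to u$ and $\overline{\pm iA+B}u_{k}\to v$ in $X_{0}^{\pm}$, then all of these convergences hold in $L^{p}(0,T;X)$, so closedness of $\overline{\pm iA+B}$ gives $v=\overline{\pm iA+B}u$, and $v\in X_{0}^{\pm}$ then forces $u\in X_{1}^{\pm}$. Note this step uses only closedness of $\overline{\pm iA+B}$, not \eqref{DeqD}.

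Next I would verify bijectivity. For surjectivity, take $f\in X_{0}^{\pm}$; by \eqref{DeqD} we have $f\in\mathcal{D}(\overline{\mp iA+B})$, so \eqref{ElRinvJ} applies and gives $J_{\pm}f\in\mathcal{D}(\overline{\pm iA+B})=X_{0}^{\pm}$ with $\overline{\pm iA+B}J_{\pm}f=f\in X_{0}^{\pm}$; hence $J_{\pm}f\in X_{1}^{\pm}$ and $J_{\pm}$ is a right inverse of \eqref{astrop}. For injectivity, take $u\in X_{1}^{\pm}$; then $u\in\mathcal{D}(\overline{\pm iA+B})$ and, by \eqref{DeqD}, $\overline{\pm iA+B}u\in X_{0}^{\pm}=\mathcal{D}(\overline{\mp iA+B})$, which is exactly the hypothesis permitting \eqref{linvcl}, so $J_{\pm}\overline{\pm iA+B}u=u$. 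Thus \eqref{astrop} is a bijection with two-sided inverse $J_{\pm}$.

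Finally I would show $J_{\pm}\in\mathcal{L}(X_{0}^{\pm},X_{1}^{\pm})$. For $f\in X_{0}^{\pm}$, using $\overline{\pm iA+B}J_{\pm}f=f$ one has $\|\overline{\pm iA+B}J_{\pm}f\|_{X_{0}^{\pm}}=\|f\|_{X_{0}^{\pm}}$ and $\|J_{\pm}f\|_{X_{0}^{\pm}}=\|J_{\pm}f\|_{L^{p}(0,T;X)}+\|f\|_{L^{p}(0,T;X)}$, so the graph norm $\|J_{\pm}f\|_{X_{1}^{\pm}}$ is controlled by $C\|f\|_{X_{0}^{\pm}}$ once one invokes $J_{\pm}\in\mathcal{L}(X_{0}^{\pm},L^{p}(0,T;X))$; alternatively, since \eqref{astrop} is already a closed bijection between the Banach spaces $X_{1}^{\pm}$ and $X_{0}^{\pm}$ that is bounded with norm $\le 1$, the inverse mapping theorem yields the same conclusion. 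The only genuinely delicate point—and the one I would state most carefully—is the double role of \eqref{DeqD}: it is what makes $J_{\pm}$ (which lives on $\mathcal{D}(\overline{\mp iA+B})$) applicable to all of $X_{0}^{\pm}$ in the surjectivity step, and what supplies the domain membership $\overline{\pm iA+B}u\in\mathcal{D}(\overline{\mp iA+B})$ needed to invoke \eqref{linvcl} in the injectivity step. Everything else is routine closed-operator bookkeeping.
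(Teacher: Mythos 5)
Your proof is correct and takes essentially the same route as the paper: you identify $J_{\pm}$ as a two-sided inverse of \eqref{astrop} by combining \eqref{ElRinvJ}, \eqref{linvcl}, and \eqref{DeqD}, you prove closedness by exactly the paper's argument (passing to $L^{p}(0,T;X)$-convergence and using closedness of $\overline{\pm iA+B}$ there), and you obtain $J_{\pm}\in\mathcal{L}(X_{0}^{\pm},X_{1}^{\pm})$ from the inverse mapping theorem. Your extra closed-graph-theorem observation (equivalence of the two graph norms under \eqref{DeqD}), which powers your alternative direct estimate, is sound but not needed for the inverse-mapping-theorem route, which is the one the paper uses.
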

\begin{proof}
By \eqref{ElRinvJ}, \eqref{linvcl}, and \eqref{DeqD}, we have that $J_{\pm}$ is the inverse of the operator in \eqref{astrop}. Concerning the closedness of this operator, let $u_{k}\in X_{1}^{\pm}$, $k\in\mathbb{N}$, be such that $u_{k}\rightarrow u$ and $\overline{\pm iA+B}u_{k}\rightarrow w$ in $X_{0}^{\pm}$ as $k\rightarrow\infty$, for some $u,w\in X_{0}^{\pm}$. By the closedness of $\overline{\pm iA+B}$ in $L^{p}(0,T;X)$, we have that $\overline{\pm iA+B}u=w$, i.e., $u\in X_{1}^{\pm}$. The fact that $J_{\pm}\in \mathcal{L}(X_{0}^{\pm},X_{1}^{\pm})$ follows from the inverse mapping theorem.
\end{proof}

\section{An abstract semilinear wave equation}\label{Sec6}

In this section, we use the well-posedness of \eqref{stabawe} to study an abstract semilinear wave equation. We begin with the following uniform boundedness result. 

\begin{lemma}\label{CTindep}
Let $p\in (1,\infty)$, $c>0$, $0<T\leq T_{0}$, $X$ be a UMD complex Banach space, $A\in \mathcal{P}(0)\cap\mathcal{RZ}_{c}$ in $X$, and let $B$ be the derivation operator defined in \eqref{Bdef}. Then, there exists a $C>0$, independent of $T\in (0,T_{0}]$, such that
$$
\|u\|_{E_{2}^{\pm}}\leq C\|f\|_{E_{0}^{\pm}} \quad \text{for any} \quad f\in E_{0}^{\pm},
$$
where $u\in E_{2}^{\pm}$ is, according to Theorem \ref{AWEWP}, the unique solution of \eqref{stabawe}.
\end{lemma}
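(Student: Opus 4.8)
The plan is to reduce the estimate to a single point: the constant $C$ in the bound \eqref{Jbound} of Theorem \ref{LASEWP} (and hence in its closure-extensions) can be chosen independently of $T\in(0,T_0]$. Granting this, the lemma follows by an explicit norm chase, so I would first perform that chase and isolate the uniformity claim as the only nontrivial ingredient.

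First I would recall from Theorem \ref{AWEWP} that the solution is $u=J_\pm J_\mp f$ and that, by \eqref{ElRinvJ}, it satisfies $\overline{\pm iA+B}u=J_\mp f$ and $\overline{\mp iA+B}\,\overline{\pm iA+B}u=f$. Taking on $E_2^\pm$ the graph norm $\|u\|_{E_2^\pm}=\|u\|_{E_0^\pm}+\|\overline{\mp iA+B}\,\overline{\pm iA+B}u\|_{E_0^\pm}$ (natural, since $E_2^\pm$ is the domain of the closed operator \eqref{ABPMOP}), the last summand equals $\|f\|_{E_0^\pm}$, while
$$
\|u\|_{E_0^\pm}=\|J_\pm J_\mp f\|_{L^p(0,T;X)}+\|J_\mp f\|_{L^p(0,T;X)}+\|f\|_{L^p(0,T;X)}.
$$
Using the extended mapping properties from Theorem \ref{LASEWP}, namely $\|J_\mp h\|_{L^p(0,T;X)}\le C(\|h\|_{L^p(0,T;X)}+\|\overline{\pm iA+B}h\|_{L^p(0,T;X)})$ together with the symmetric bound for $J_\pm$, and the identity $\overline{\mp iA+B}J_\mp f=f$, I would estimate $\|J_\mp f\|_{L^p(0,T;X)}\le C\|f\|_{E_0^\pm}$ and then $\|J_\pm J_\mp f\|_{L^p(0,T;X)}\le C(\|J_\mp f\|_{L^p(0,T;X)}+\|f\|_{L^p(0,T;X)})\le C'\|f\|_{E_0^\pm}$. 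This gives $\|u\|_{E_2^\pm}\le C''\|f\|_{E_0^\pm}$ with $C''$ assembled solely from the constant in \eqref{Jbound}.

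The real work is therefore to show that this constant is $T$-uniform. Here I would revisit the representation \eqref{JestBMDomtoLp}, which writes $2J_\pm g=B^{-1}g+e^{\gamma\cdot}T_M\big(e^{-\gamma\cdot}(\pm iA-B)g\big)$, where $T_M$ is the Fourier multiplier on $L^p(\mathbb{R};X)$ with symbol $M(s)=(\pm s\mp i\gamma)^{-1}(A\pm s\mp i\gamma)^{-1}$ and all functions are extended by zero off $[0,T]$. The decisive observation is that $c$ is intrinsic to $A$ and $\gamma>c$ is a fixed choice, so $M$ together with the $R$-bounds of $\{M(s)\}_{s\in\mathbb{R}}$ and $\{sM'(s)\}_{s\in\mathbb{R}}$ — and hence the constant furnished by Weis's theorem \cite[Theorem 3.4]{Weis} — are independent of $T$. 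The only $T$-dependent ingredients are the exponential weights and $B^{-1}$: on $[0,T]\subset[0,T_0]$ one has $\sup_{t}e^{\gamma t}\le e^{\gamma T_0}$ and $|e^{-\gamma x}|\le1$, so passing between $[0,T]$ and $\mathbb{R}$ through the weights enlarges the relevant $L^p$-norm by at most the harmless factor $e^{\gamma T_0}$, while by \eqref{expresdt}--\eqref{BRND} we have $\|B^{-1}\|_{\mathcal{L}(L^p(0,T;X))}\le T\le T_0$. This yields \eqref{Jbound} with a constant depending only on $p$, $A$, $c$, $\gamma$, and $T_0$, and the same uniformity survives the density/closure extensions carried out in Theorem \ref{LASEWP}. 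The main obstacle is precisely this bookkeeping inside the multiplier argument: one must verify that the restriction/zero-extension between $[0,T]$ and $\mathbb{R}$ and the insertion of the exponential weights contribute no $T$-dependence beyond $e^{\gamma T_0}$, and that no further dependence sneaks in through the choices of $\gamma$, $c$, or the operator-valued multiplier norm.
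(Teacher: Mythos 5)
Your proposal is correct and follows essentially the same route as the paper: a norm chase built on \eqref{ElRinvJ} and the boundedness statements of Theorem \ref{LASEWP} reduces the lemma to the $T$-uniformity of the constant in \eqref{Jbound}, which is then obtained from the multiplier representation \eqref{JestBMDomtoLp}, the bound \eqref{BRND} (giving $\|B^{-1}\|_{\mathcal{L}(L^{p}(0,T;X))}\leq T\leq T_{0}$), and the $T$-independence of the constant in Weis's theorem, with the uniformity passing to the closure extensions by construction. Your explicit bookkeeping of the exponential weights and the zero-extension is a correct elaboration of what the paper compresses into its citation of \eqref{BRND}, \eqref{JestBMDomtoLp}, and \cite[Theorem 3.4]{Weis}.
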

\begin{proof}
Since we do not have a description of the space $E_{0}^{\pm}$ in terms of vector-valued Sobolev spaces, and we have no information concerning the closedness of the sum $\pm iA+B$, we cannot follow the simple argument in the proof of \cite[Proposition 2.2]{CL}. Therefore, we will use the inverse $J_{\pm}J_{\mp}$ from Theorem \ref{AWEWP}. We estimate
\begin{eqnarray*}
\|J_{\pm}J_{\mp}\|_{\mathcal{L}(E_{0}^{\pm},E_{2}^{\pm})}&=&\sup_{v\in E_{0}^{\pm}, \|v\|_{E_{0}^{\pm}}=1 }\|J_{\pm}J_{\mp}v\|_{E_{2}^{\pm}}\\
&=&\sup_{v\in E_{0}^{\pm}, \|v\|_{E_{0}^{\pm}}=1 }( \|J_{\pm}J_{\mp}v\|_{E_{0}^{\pm}} + \|\overline{\mp iA+B} \, \overline{\pm iA+B}J_{\pm}J_{\mp}v\|_{E_{0}^{\pm}} )\\
&=&1+\sup_{v\in E_{0}^{\pm}, \|v\|_{E_{0}^{\pm}}=1 } \|J_{\pm}J_{\mp}v\|_{E_{0}^{\pm}},
\end{eqnarray*}
where we have used \eqref{ElRinvJ} in the last step. Thus, again by \eqref{ElRinvJ}, we get
\begin{eqnarray*}
\|J_{\pm}J_{\mp}\|_{\mathcal{L}(E_{0}^{\pm},E_{2}^{\pm})} &=&1+\sup_{v\in E_{0}^{\pm}, \|v\|_{E_{0}^{\pm}}=1 } (\|J_{\pm}J_{\mp}v \|_{L^{p}(0,T;X)} \\
&&+\|\overline{\pm iA+B}J_{\pm}J_{\mp}v \|_{L^{p}(0,T;X)}+\|\overline{\mp iA+B}\, \overline{\pm iA+B}J_{\pm}J_{\mp}v \|_{L^{p}(0,T;X)} )\\
&\leq&2+\sup_{v\in E_{0}^{\pm}, \|v\|_{E_{0}^{\pm}}=1 } (\|J_{\pm}J_{\mp}v \|_{L^{p}(0,T;X)} +\|J_{\mp}v \|_{L^{p}(0,T;X)})\\
&\leq&2+\sup_{v\in E_{0}^{\pm}, \|v\|_{E_{0}^{\pm}}=1 } (C_{\pm}\|J_{\mp}v \|_{\mathcal{D}(\overline{\mp iA+B})} +C_{\mp}\|v \|_{\mathcal{D}(\overline{\pm iA+B})})\\
&\leq&2+C_{0}\sup_{v\in E_{0}^{\pm}, \|v\|_{E_{0}^{\pm}}=1 } (\|J_{\mp}v \|_{\mathcal{D}(\overline{\mp iA+B})} +1),
\end{eqnarray*}
where $C_{\pm}=\|J_{\pm}\|_{\mathcal{L}( \mathcal{D}(\overline{\mp iA+B}), L^{p}(0,T;X))}$, $C_{0}=\max\{C_{-},C_{+}\}$, and we have used Theorem \ref{LASEWP}. Therefore, again by Theorem \ref{LASEWP} and \eqref{ElRinvJ}, we find
\begin{eqnarray*}
\|J_{\pm}J_{\mp}\|_{\mathcal{L}(E_{0}^{\pm},E_{2}^{\pm})} &\leq&2+C_{0}\sup_{v\in E_{0}^{\pm}, \|v\|_{E_{0}^{\pm}}=1 } (\|J_{\mp}v \|_{L^{p}(0,T;X)} +\|\overline{\mp iA+B}J_{\mp}v \|_{L^{p}(0,T;X)} +1)\\
&\leq&2+C_{0}\sup_{v\in E_{0}^{\pm}, \|v\|_{E_{0}^{\pm}}=1 } (C_{\mp}\|v \|_{\mathcal{D}(\overline{\pm iA+B})} +\|v \|_{L^{p}(0,T;X)} +1)\\
&\leq&2(1+C_{0}(C_{0}+1)).
\end{eqnarray*}
Hence, it suffices to show that the norms $C_{\pm}$ are uniformly bounded in $T\in (0,T_{0}]$. By \eqref{BRND}, \eqref{JestBMDomtoLp}, and \cite[Theorem 3.4]{Weis}, the constant $C$ in \eqref{Jbound} is uniformly bounded in $T\in (0,T_{0}]$. Then, by the definition of the extension of $J_{\pm}$ as a bounded map from $\mathcal{D}(\overline{\mp iA+B})$ to $L^{p}(0,T;X)$ (see the proof of Theorem \ref{LASEWP}), the same holds true for $C_{\pm}$.
\end{proof}

Based on Theorem \ref{AWEWP} and a standard Banach fixed-point argument, we establish the following.

\begin{theorem}\label{ThNLW}
Let $p\in (1,\infty)$, $c>0$, $0<T\leq T_{0}$, $X$ be a UMD complex Banach space, and let $A\in \mathcal{P}(0)\cap\mathcal{RZ}_{c}$ in $X$. Consider the problem
\begin{eqnarray}\label{NLW1}
u''(t)+A^{2}u(t) &=& F(u(t),t), \quad t\in (0,T),\\\label{NLW2}
u(0)\, \, =\, \, u'(0) &=& 0,
\end{eqnarray}
where $F(x,t)$ is a polynomial in $x$ with 
$$
F(0,\cdot)\in W_{0}^{2,p}(0,T_{0};X)\cap L^{p}(0,T_{0};\mathcal{D}(A^{2})) \cap W_{0}^{1,p}(0,T_{0};\mathcal{D}(A))
$$ 
and $t$-dependent higher-order coefficients that belong to $W_{0}^{2,\infty}(0,T_{0};\mathcal{D}(A^{2}))\cup W_{0}^{2,\infty}(0,T_{0};\mathbb{C})$. Assume that:\\
{\bf (i)} $E_{2}^{+}\hookrightarrow W_{0}^{2,\infty}(0,T;\mathcal{D}(A^{2}))$, with the norm of the embedding independent of $T\in (0,T_{0}]$.\\
{\bf (ii)} The space $\mathcal{D}(A^{2})$, up to an equivalent norm, is a Banach algebra.\\
{\bf (iii)} $\|F(0,\cdot)\|_{E_{0}^{+}}$ is uniformly bounded in $T\in (0,T_{0}]$.\\
Then, there exist a $T\in(0,T_{0}]$ and a unique $u\in E_{2}^{+}$ satisfying \eqref{NLW1}-\eqref{NLW2}. Furthermore, $u$ satisfies the regularity $u\in C^{1}([0,T];\mathcal{D}(A^{2}))$, and there exists a $C>0$, depending only on $X$, $A$, $c$, $p$, and $T_{0}$, such that
\begin{equation}\label{stabilF}
\|u\|_{E_{2}^{+}}\leq C\|F(u,\cdot)\|_{E_{0}^{+}}.
\end{equation}
\end{theorem}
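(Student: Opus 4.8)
The plan is to recast \eqref{NLW1}--\eqref{NLW2} as a fixed-point equation for the linear solution operator of Theorem \ref{AWEWP} and to solve it by the contraction mapping principle, extracting the contraction from the shortness of the time interval while keeping the linear solution operator under control by Lemma \ref{CTindep}. Since $A$ and $B$ are resolvent commuting, one has $\overline{-iA+B}\,\overline{iA+B}=B^{2}+A^{2}$, so that \eqref{NLW1}--\eqref{NLW2} is exactly \eqref{stabawe} in the $+$ case with right-hand side $F(u,\cdot)$; by Theorem \ref{AWEWP}, an element $u\in E_{2}^{+}$ solves the problem if and only if $u=J_{+}J_{-}F(u,\cdot)$. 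Thus it suffices to produce a fixed point of $\Phi(u)=J_{+}J_{-}F(u,\cdot)$ in $E_{2}^{+}$.

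The key estimate I would establish is the embedding
$$
\|v\|_{E_{0}^{+}}\leq C\,T^{1/p}\,\|v\|_{W_{0}^{2,\infty}(0,T;\mathcal{D}(A^{2}))},\qquad v\in W_{0}^{2,\infty}(0,T;\mathcal{D}(A^{2})),
$$
with $C$ independent of $T$. For such $v$, the vanishing of $v$ and $v'$ at $t=0$ gives $v\in\mathcal{D}(iA+B)$ with $(iA+B)v=iAv+v'$, and then $(iA+B)v\in\mathcal{D}(-iA+B)$ with $(-iA+B)(iA+B)v=v''+A^{2}v$; each of $v$, $iAv+v'$, $v''+A^{2}v$ lies in $L^{\infty}(0,T;X)$ with norm bounded by $\|v\|_{W_{0}^{2,\infty}(0,T;\mathcal{D}(A^{2}))}$, and the passage from $L^{\infty}(0,T;X)$ to $L^{p}(0,T;X)$ produces the factor $T^{1/p}$. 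This is exactly the smallness that drives the contraction, because by Lemma \ref{CTindep} the norm $C_{JJ}:=\|J_{+}J_{-}\|_{\mathcal{L}(E_{0}^{+},E_{2}^{+})}$ stays bounded as $T\to0$.

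Next I would control the nonlinearity. Writing $F(u,t)=F(0,t)+G(u,t)$ with $G(u,t)=\sum_{k\geq1}a_{k}(t)u(t)^{k}$, the Banach algebra hypothesis (ii), the regularity of the higher-order coefficients in $W_{0}^{2,\infty}(0,T_{0};\mathcal{D}(A^{2}))\cup W_{0}^{2,\infty}(0,T_{0};\mathbb{C})$, and the vanishing of the $a_{k}$ and of $u$ at $t=0$ yield, via the Leibniz rule, that $G(u,\cdot)\in W_{0}^{2,\infty}(0,T;\mathcal{D}(A^{2}))$ together with polynomial bounds
$$
\|G(u,\cdot)\|_{W_{0}^{2,\infty}(0,T;\mathcal{D}(A^{2}))}\leq Q(R),\qquad \|G(u,\cdot)-G(\tilde u,\cdot)\|_{W_{0}^{2,\infty}(0,T;\mathcal{D}(A^{2}))}\leq L(R)\,\|u-\tilde u\|_{E_{2}^{+}}
$$
whenever $\|u\|_{E_{2}^{+}},\|\tilde u\|_{E_{2}^{+}}\leq R$, where I use (i) to dominate $\|u\|_{W_{0}^{2,\infty}(0,T;\mathcal{D}(A^{2}))}$ by $\|u\|_{E_{2}^{+}}$, and where restricting the coefficients to $[0,T]\subset[0,T_{0}]$ makes $Q,L$ uniform in $T$. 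Combining this with the $T^{1/p}$-embedding, with (iii) (which bounds $\|F(0,\cdot)\|_{E_{0}^{+}}$ uniformly in $T$ and hence places $F(u,\cdot)=F(0,\cdot)+G(u,\cdot)$ in $E_{0}^{+}$), and with $C_{JJ}$ from Lemma \ref{CTindep}, I would fix $R$ of order $C_{JJ}\sup_{T\in(0,T_{0}]}\|F(0,\cdot)\|_{E_{0}^{+}}$ and then choose $T\in(0,T_{0}]$ so small that $C\,C_{JJ}\,T^{1/p}L(R)<\tfrac12$ and $C\,C_{JJ}\,T^{1/p}Q(R)\leq R/2$. These two inequalities make $\Phi$ map the closed ball $\{u\in E_{2}^{+}:\|u\|_{E_{2}^{+}}\leq R\}$ into itself and act there as a contraction, so the Banach fixed-point theorem furnishes a unique fixed point $u$ in this ball, i.e. a solution of \eqref{NLW1}--\eqref{NLW2}. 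The regularity $u\in C^{1}([0,T];\mathcal{D}(A^{2}))$ follows from (i) and the Sobolev embedding $W^{2,\infty}(0,T;\mathcal{D}(A^{2}))\hookrightarrow C^{1}([0,T];\mathcal{D}(A^{2}))$, and \eqref{stabilF} is immediate from $u=J_{+}J_{-}F(u,\cdot)$ and the $T$-independent bound of Lemma \ref{CTindep}.

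The main obstacle I anticipate lies in the interplay between the Nemytskii estimates and the source of smallness. One must verify that forming products and powers preserves membership in $W_{0}^{2,\infty}(0,T;\mathcal{D}(A^{2}))$, \emph{including} the homogeneous initial conditions, so that $F(u,\cdot)$ genuinely lies in $E_{0}^{+}$; and, more delicately, because the linear solution operator $J_{+}J_{-}$ is \emph{not} small for small $T$, the entire contraction must be purchased through the factor $T^{1/p}$ in the embedding $W_{0}^{2,\infty}(0,T;\mathcal{D}(A^{2}))\hookrightarrow E_{0}^{+}$. This is precisely why the $T$-uniform boundedness of $J_{+}J_{-}$ from Lemma \ref{CTindep} is indispensable: without it, the constant $C_{JJ}$ could degenerate as $T\to0$ and cancel the gain coming from $T^{1/p}$.
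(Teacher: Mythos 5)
Your proposal is, in substance, the same argument as the paper's: you solve the problem by a contraction for the solution operator $J_{+}J_{-}$ of Theorem \ref{AWEWP} on a ball in $E_{2}^{+}$, you draw all the smallness from the factor $T^{1/p}$ incurred in passing from $W_{0}^{2,\infty}(0,T;\mathcal{D}(A^{2}))$ into $E_{0}^{+}$, you use (i)--(ii) for the polynomial (Banach-algebra/Leibniz) estimates on the nonlinearity, Lemma \ref{CTindep} for the $T$-uniformity of $\|J_{+}J_{-}\|_{\mathcal{L}(E_{0}^{+},E_{2}^{+})}$, and (i) for the $C^{1}$ regularity and, together with Lemma \ref{CTindep}, for \eqref{stabilF}. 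The differences are organizational rather than substantive: you center the ball at $0$ with radius $R$ tied to $C_{JJ}\sup_{T}\|F(0,\cdot)\|_{E_{0}^{+}}$, whereas the paper centers it at $w=J_{+}J_{-}F(0,\cdot)$ with arbitrary radius; and you compress the paper's explicit chain-rule computations (its \eqref{LLipschest}, \eqref{lipv1v2}, \eqref{Balllipshest} and the surrounding estimates) into abstract bounds $Q(R)$, $L(R)$. Your isolation of the inequality $\|v\|_{E_{0}^{+}}\leq C\,T^{1/p}\|v\|_{W_{0}^{2,\infty}(0,T;\mathcal{D}(A^{2}))}$ is a clean packaging of the mechanism that the paper distributes over several estimates.

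One step, however, is wrong as justified and must be repaired. You assert that, because $A$ and $B$ are resolvent commuting, $\overline{-iA+B}\,\overline{iA+B}=B^{2}+A^{2}$ as operators. This identity is not known: the paper stresses (in the proof of Lemma \ref{CTindep} and in the remark closing Section \ref{Sec6}) that neither the closedness of $\pm iA+B$ nor a Sobolev description of $E_{2}^{\pm}$ is available, and resolvent commutation gives no such equality of domains; $B^{2}+A^{2}$ is defined on $W_{0}^{2,p}(0,T;X)\cap L^{p}(0,T;\mathcal{D}(A^{2}))$, while the domain of the product of the closures is a purely abstract object. What is true, and is all you need, is that $\overline{-iA+B}\,\overline{iA+B}$ acts as $v\mapsto v''+A^{2}v$ on $W_{0}^{2,\infty}(0,T;\mathcal{D}(A^{2}))$: there the genuine (non-closed) sums apply, $(iA+B)v=iAv+v'$ lies in $W_{0}^{1,p}(0,T;X)\cap L^{p}(0,T;\mathcal{D}(A))$, and $ABv=BAv$. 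This is exactly the computation you already perform in your key embedding estimate, and hypothesis (i) transports it to all of $E_{2}^{+}$. So the equivalence ``$u\in E_{2}^{+}$ solves \eqref{NLW1}--\eqref{NLW2} if and only if $u=J_{+}J_{-}F(u(\cdot),\cdot)$'' must be justified through (i), as in the final paragraph of the paper's proof, not through an operator identity; with that substitution your argument is sound.

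A smaller point: the Banach fixed-point theorem gives uniqueness only within the ball $\{\|u\|_{E_{2}^{+}}\leq R\}$, while the statement asserts uniqueness in $E_{2}^{+}$, and your $R$ is pinned to the data. The paper's proof has the same feature and addresses it by noting that its radius $r$ is arbitrary and that its constants can be chosen uniformly for $r$ in bounded ranges; you should record the analogous fact for your construction (your $Q$ and $L$ are polynomials in $R$, so any larger ball can be handled after further shrinking $T$), so that the uniqueness assertion is supported at the same level as in the paper.
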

\begin{proof}
Since $W_{0}^{2,p}(0,T;X)\cap L^{p}(0,T;\mathcal{D}(A^{2})) \cap W_{0}^{1,p}(0,T;\mathcal{D}(A))\subset E_{0}^{+}$, according to Theorem \ref{AWEWP}, let $w\in E_{2}^{+}$ be the unique solution of 
$$
(\overline{-iA+B} \, \overline{iA+B}w)(t)=F(0,t), \quad t\in [0,T).
$$
By (ii), the space $W_{0}^{2,\infty}(0,T;\mathcal{D}(A^{2}))$ is, up to an equivalent norm, a Banach algebra. In particular, for any $\xi\in W_{0}^{2,\infty}(0,T;\mathcal{D}(A^{2}))$ and $k\in\mathbb{N}$, we have that $\xi^{k}\in W^{2,\infty}(0,T;\mathcal{D}(A^{2}))$, with 
$$
(\xi^{k})_{t}=k\xi^{k-1}\xi_{t} \quad \text{and} \quad (\xi^{k})_{tt}=k\xi^{k-1}\xi_{tt}+k(k-1)\xi^{k-2}(\xi_{t})^{2}
$$ 
for almost all $t\in [0,T)$. Hence, due to (i), we have that 
\begin{equation}\label{DifFreg}
F(v(\cdot),\cdot)-F(0,\cdot)\in W_{0}^{2,\infty}(0,T;\mathcal{D}(A^{2})) 
\end{equation}
for any $v\in E_{2}^{+}$. Therefore, since $W_{0}^{2,\infty}(0,T;\mathcal{D}(A^{2}))\subset E_{0}^{+}$, we have that $F(v(\cdot),\cdot)\in E_{0}^{+}$ whenever $v\in E_{2}^{+}$. Based on this fact, let $r>0$ and consider the map 
$$
\{v\in E_{2}^{+} \, |\, \|v-w\|_{E_{2}^{+}}\leq r\}=U\ni v \mapsto g(v)\in E_{2}^{+},
$$
where $g(v)=h$ is, according to Theorem \ref{AWEWP}, the solution of 
$$
(\overline{-iA+B} \, \overline{iA+B}h)(t)=F(v(t),t), \quad t\in [0,T).
$$

Let $v,v_{1},v_{2}\in U$. For almost all $t\in [0,T)$, we have
\begin{eqnarray*}
\|v^{k}(t)\|_{\mathcal{D}(A^{2})} &\leq &C_{0}\|v(t)\|_{\mathcal{D}(A^{2})}^{k} \\
&\leq& C_{0}(\|v-w\|_{W_{0}^{2,\infty}(0,T;\mathcal{D}(A^{2}))}+\|w\|_{W_{0}^{2,\infty}(0,T;\mathcal{D}(A^{2}))})^{k}\\
&\leq& C_{1}(r+\|F(0,\cdot)\|_{E_{0}^{+}})^{k},
\end{eqnarray*}
for certain $C_{0},C_{1}>0$ independent of $T\in (0,T_{0}]$, where we have used Lemma \ref{CTindep} and (i)-(ii). As a consequence, if $ L\in \{F, F_{x}, F_{t}, F_{xx}, F_{xt}, F_{tt}\}$, then
\begin{equation}\label{LLipschest}
\|L(v(\cdot),\cdot)-L(0,\cdot)\|_{L^{p}(0,T;\mathcal{D}(A^{2}))} \leq C_{2}T^{\frac{1}{p}},
\end{equation}
due to (i) and (iii), for certain $C_{2}>0$ independent of $T\in (0,T_{0}]$. Furthermore, by (i)-(ii), there exists a $C_{3}>0$, independent of $T\in (0,T_{0}]$, such that 
$$
\|L(v_{1}(t),t)-L(v_{2}(t),t)\|_{\mathcal{D}(A^{2})}\leq C_{3}\|v_{1}(t)-v_{2}(t)\|_{\mathcal{D}(A^{2})};
$$
here we have used the fact that 
\begin{eqnarray*}
\lefteqn{\|v_{1}^{k}(t)-v_{2}^{k}(t)\|_{\mathcal{D}(A^{2})}}\\
&\leq&C_{4} \|v_{1}(t)-v_{2}(t)\|_{\mathcal{D}(A^{2})}\|v_{1}^{k-1}(t)+\cdots+v_{2}^{k-1}(t)\|_{\mathcal{D}(A^{2})}\\
&\leq&C_{5} \|v_{1}(t)-v_{2}(t)\|_{\mathcal{D}(A^{2})}(\|v_{1}(t)\|_{\mathcal{D}(A^{2})}^{k-1}+\cdots+\|v_{2}(t)\|_{\mathcal{D}(A^{2})}^{k-1})\\
&\leq&C_{6} \|v_{1}(t)-v_{2}(t)\|_{\mathcal{D}(A^{2})},
\end{eqnarray*}
for certain $C_{4},C_{5},C_{6}>0$. Therefore
\begin{eqnarray}\nonumber
\lefteqn{\|L(v_{1}(\cdot),\cdot)-L(v_{2}(\cdot),\cdot)\|_{L^{p}(0,T;\mathcal{D}(A^{2}))}}\\\label{lipv1v2}
&\leq& C_{3}\|v_{1}-v_{2}\|_{L^{p}(0,T;\mathcal{D}(A^{2}))}\leq C_{3}T^{\frac{1}{p}}\|v_{1}-v_{2}\|_{L^{\infty}(0,T;\mathcal{D}(A^{2}))}\leq C_{7}T^{\frac{1}{p}}\|v_{1}-v_{2}\|_{E_{2}^{+}},
\end{eqnarray}
due to (i), for certain $C_{7}>0$ independent of $T\in (0,T_{0}]$. 

Returning to the map $g$, we have
$$
(\overline{-iA+B} \, \overline{iA+B}(g(v)-w))(t)=F(v(t),t),-F(0,t), \quad t\in [0,T).
$$
Hence, by Theorem \ref{AWEWP} and Lemma \ref{CTindep}, there exists a $C_{8}>0$, independent of $T\in (0,T_{0}]$, such that
\begin{eqnarray*}
\|g(v)-w\|_{E_{2}^{+}}&\leq& C_{8}\|F(v(\cdot),\cdot)-F(0,\cdot)\|_{E_{0}^{+}}\\
&\leq&C_{8}(\|F(v(\cdot),\cdot)-F(0,\cdot)\|_{L^{p}(0,T;X)}+\|\overline{iA+B}(F(v(\cdot),\cdot)-F(0,\cdot))\|_{L^{p}(0,T;X)}\\
&&+\|\overline{-iA+B}\, \overline{iA+B}(F(v(\cdot),\cdot)-F(0,\cdot))\|_{L^{p}(0,T;X)}).
\end{eqnarray*}
Due to \eqref{DifFreg}, we have
\begin{eqnarray}\nonumber
\|g(v)-w\|_{E_{2}^{+}}&\leq& C_{8}(\|F(v(\cdot),\cdot)-F(0,\cdot)\|_{L^{p}(0,T;X)}\\\nonumber
&&+\|(iA+B)(F(v(\cdot),\cdot)-F(0,\cdot))\|_{L^{p}(0,T;X)}\\\nonumber
&&+\|(-iA+B)(iA+B)(F(v(\cdot),\cdot)-F(0,\cdot))\|_{L^{p}(0,T;X)})\\\nonumber
&\leq& C_{9}(\|F(v(\cdot),\cdot)-F(0,\cdot)\|_{W_{0}^{2,p}(0,T;X)}+\|F(v(\cdot),\cdot)-F(0,\cdot)\|_{W_{0}^{1,p}(0,T;\mathcal{D}(A))}\\\nonumber
&&+\|F(v(\cdot),\cdot)-F(0,\cdot)\|_{L^{p}(0,T;\mathcal{D}(A^{2}))})\\\nonumber
&\leq& C_{10}(\|F(v(\cdot),\cdot)-F(0,\cdot)\|_{L^{p}(0,T;\mathcal{D}(A^{2}))}\\\nonumber
&&+\|F_{x}(v(\cdot),\cdot)v'(\cdot)+F_{t}(v(\cdot),\cdot)-F_{t}(0,\cdot)\|_{L^{p}(0,T;\mathcal{D}(A))}\\\nonumber
&&+\|F_{xx}(v(\cdot),\cdot)(v'(\cdot))^{2}+2F_{xt}(v(\cdot),\cdot)v'(\cdot)\\\label{Balllipshest}
&&+F_{x}(v(\cdot),\cdot)v''(\cdot)+F_{tt}(v(\cdot),\cdot)-F_{tt}(0,\cdot)\|_{L^{p}(0,T;X)}),
\end{eqnarray}
for certain $C_{9},C_{10}>0$ independent of $T\in (0,T_{0}]$. The third term on the right-hand side of the above inequality is
\begin{eqnarray*}
\lefteqn{\leq C_{11}(\|F_{xx}(v(\cdot),\cdot)-F_{xx}(0,\cdot)\|_{L^{p}(0,T;\mathcal{D}(A^{2}))}\|v'\|_{L^{\infty}(0,T;\mathcal{D}(A^{2}))}^{2}}\\
&&+\|F_{xx}(0,\cdot)\|_{L^{p}(0,T;Y)}\|v'\|_{L^{\infty}(0,T;\mathcal{D}(A^{2}))}^{2}\\
&&+\|F_{xt}(v(\cdot),\cdot)-F_{xt}(0,\cdot)\|_{L^{p}(0,T;\mathcal{D}(A^{2}))}\|v'\|_{L^{\infty}(0,T;\mathcal{D}(A^{2}))}\\
&&+\|F_{xt}(0,\cdot)\|_{L^{p}(0,T;Y)}\|v'\|_{L^{\infty}(0,T;\mathcal{D}(A^{2}))}\\
&&+\|F_{x}(v(\cdot),\cdot)-F_{x}(0,\cdot)\|_{L^{p}(0,T;\mathcal{D}(A^{2}))}\|v''\|_{L^{\infty}(0,T;\mathcal{D}(A^{2}))} \\
&&+\|F_{x}(0,\cdot)\|_{L^{p}(0,T;Y)}\|v''\|_{L^{\infty}(0,T;\mathcal{D}(A^{2}))} +\|F_{tt}(v(\cdot),\cdot)-F_{tt}(0,\cdot)\|_{L^{p}(0,T;\mathcal{D}(A^{2}))})\\
&\leq&C_{12}\big(T^{\frac{1}{p}}(\|v\|_{E_{2}^{+}}^{2} +\|v\|_{E_{2}^{+}}+1) +\|F_{xx}(0,\cdot)\|_{L^{p}(0,T;Y)}\|v\|_{E_{2}^{+}}^{2}\\
&&+(\|F_{xt}(0,\cdot)\|_{L^{p}(0,T;Y)}+\|F_{x}(0,\cdot)\|_{L^{p}(0,T;Y)})\|v\|_{E_{2}^{+}}\big)\\
&\leq&C_{12}\big(T^{\frac{1}{p}}((\|w\|_{E_{2}^{+}}+r)^{2} +\|w\|_{E_{2}^{+}}+r+1) +\|F_{xx}(0,\cdot)\|_{L^{p}(0,T;Y)}(\|w\|_{E_{2}^{+}}+r)^{2}\\
&&+(\|F_{xt}(0,\cdot)\|_{L^{p}(0,T;Y)}+\|F_{x}(0,\cdot)\|_{L^{p}(0,T;Y)})(\|w\|_{E_{2}^{+}}+r)\big)\\
&\leq&C_{13}\big(T^{\frac{1}{p}}((\|F(0,\cdot)\|_{E_{0}^{+}}+1)^{2} +\|F(0,\cdot)\|_{E_{0}^{+}}+1) \\
&&+\|F_{xx}(0,\cdot)\|_{L^{p}(0,T;Y)}(\|F(0,\cdot)\|_{E_{0}^{+}}+1)^{2}\\
&&+(\|F_{xt}(0,\cdot)\|_{L^{p}(0,T;Y)}+\|F_{x}(0,\cdot)\|_{L^{p}(0,T;Y)})(\|F(0,\cdot)\|_{E_{0}^{+}}+1)\big),
\end{eqnarray*}
for certain $C_{11},C_{12},C_{13}>0$ independent of $T\in (0,T_{0}]$, where $Y$ is either $\mathcal{D}(A^{2})$ or $\mathbb{C}$. Treating the second term on the right-hand side of \eqref{Balllipshest} similarly, by \eqref{LLipschest} and (iii) we see that, after choosing $T$ sufficiently small, we can make $\|g(v)-w\|_{E_{2}^{+}}\leq r/2$ for all $v\in U$, and hence $g$ induces a map from $U$ to itself.

Furthermore,
$$
(\overline{-iA+B} \, \overline{iA+B}(g(v_{1})-g(v_{2})))(t)=F(v_{1}(t),t)-F(v_{2}(t),t), \quad t\in [0,T).
$$
Similarly to \eqref{DifFreg}, we have
$$
F(v_{1}(\cdot),\cdot)-F(v_{2}(\cdot),\cdot)\in W_{0}^{2,\infty}(0,T;\mathcal{D}(A^{2})) \subset E_{0}^{+},
$$
so that, by Theorem \ref{AWEWP}, we get
\begin{eqnarray}\nonumber
\|g(v_{1})-g(v_{2})\|_{E_{2}^{+}}&\leq& C_{8}(\|F(v_{1}(\cdot),\cdot)-F(v_{2}(\cdot),\cdot)\|_{L^{p}(0,T;X)}\\\nonumber
&&+\|(iA+B)(F(v_{1}(\cdot),\cdot)-F(v_{2}(\cdot),\cdot))\|_{L^{p}(0,T;X)}\\\nonumber
&&+\|(-iA+B)(iA+B)(F(v_{1}(\cdot),\cdot)-F(v_{2}(\cdot),\cdot))\|_{L^{p}(0,T;X)})\\\nonumber
&\leq& C_{9}(\|F(v_{1}(\cdot),\cdot)-F(v_{2}(\cdot),\cdot)\|_{W_{0}^{2,p}(0,T;X)}\\\nonumber
&&+\|F(v_{1}(\cdot),\cdot)-F(v_{2}(\cdot),\cdot)\|_{W_{0}^{1,p}(0,T;\mathcal{D}(A))}\\\nonumber
&&+\|F(v_{1}(\cdot),\cdot)-F(v_{2}(\cdot),\cdot)\|_{L^{p}(0,T;\mathcal{D}(A^{2}))})\\\label{2ndtermLip}
&\leq&C_{14}\|F(v_{1}(\cdot),\cdot)-F(v_{2}(\cdot),\cdot)\|_{W_{0}^{2,p}(0,T;\mathcal{D}(A^{2}))},
\end{eqnarray}
for certain $C_{14}>0$. We have
\begin{eqnarray}\nonumber
\lefteqn{\|F(v_{1}(\cdot),\cdot)-F(v_{2}(\cdot),\cdot)\|_{W_{0}^{2,p}(0,T;\mathcal{D}(A^{2}))}}\\\nonumber
&\leq& C_{15}(\|F(v_{1}(\cdot),\cdot)-F(v_{2}(\cdot),\cdot)\|_{L^{p}(0,T;\mathcal{D}(A^{2}))}\\\nonumber
&&+\|F_{x}(v_{1}(\cdot),\cdot)v'_{1}(\cdot)-F_{x}(v_{2}(\cdot),\cdot) v'_{2}(\cdot)\|_{L^{p}(0,T;\mathcal{D}(A^{2}))}\\\nonumber
&&+\|F_{t}(v_{1}(\cdot),\cdot)-F_{t}(v_{2}(\cdot),\cdot) \|_{L^{p}(0,T;\mathcal{D}(A^{2}))}\\\nonumber
&&+\|F_{xx}(v_{1}(\cdot),\cdot)(v'_{1}(\cdot))^{2}-F_{xx}(v_{2}(\cdot),\cdot)(v'_{2}(\cdot))^{2}\|_{L^{p}(0,T;\mathcal{D}(A^{2}))}\\\nonumber
&&+\|F_{xt}(v_{1}(\cdot),\cdot)v'_{1}(\cdot)-F_{xt}(v_{2}(\cdot),\cdot) v'_{2}(\cdot)\|_{L^{p}(0,T;\mathcal{D}(A^{2}))}\\\nonumber
&&+\|F_{x}(v_{1}(\cdot),\cdot)v''_{1}(\cdot)-F_{x}(v_{2}(\cdot),\cdot) v''_{2}(\cdot)\|_{L^{p}(0,T;\mathcal{D}(A^{2}))}\\\label{Fxxesty}
&&+\|F_{tt}(v_{1}(\cdot),\cdot)-F_{tt}(v_{2}(\cdot),\cdot) \|_{L^{p}(0,T;\mathcal{D}(A^{2}))}),
\end{eqnarray}
for certain $C_{15}>0$ independent of $T\in (0,T_{0}]$. By Lemma \ref{CTindep}, \eqref{LLipschest}, and \eqref{lipv1v2}, we estimate
\begin{eqnarray*}
\lefteqn{\|F_{xx}(v_{1}(\cdot),\cdot)(v'_{1}(\cdot))^{2}-F_{xx}(v_{2}(\cdot),\cdot)(v'_{2}(\cdot))^{2}\|_{L^{p}(0,T;\mathcal{D}(A^{2}))}}\\
&\leq&\|F_{xx}(v_{1}(\cdot),\cdot)(v'_{1}(\cdot))^{2} -F_{xx}(v_{2}(\cdot),\cdot)(v'_{1}(\cdot))^{2}\|_{L^{p}(0,T;\mathcal{D}(A^{2}))}\\
&&+\|F_{xx}(v_{2}(\cdot),\cdot)(v'_{1}(\cdot))^{2}-F_{xx}(v_{2}(\cdot),\cdot)(v'_{2}(\cdot))^{2}\|_{L^{p}(0,T;\mathcal{D}(A^{2}))}\\
&\leq&C_{16}(\|F_{xx}(v_{1}(\cdot),\cdot) -F_{xx}(v_{2}(\cdot),\cdot)\|_{L^{p}(0,T;\mathcal{D}(A^{2}))} \|v'_{1}\|_{L^{\infty}(0,T;\mathcal{D}(A^{2}))}^{2}\\
&&+\|F_{xx}(v_{2}(\cdot),\cdot)\|_{L^{p}(0,T;Y)} \|v'_{1}+v'_{2}\|_{L^{\infty}(0,T;\mathcal{D}(A^{2}))}\|v'_{1}-v'_{2}\|_{L^{\infty}(0,T;\mathcal{D}(A^{2}))})\\
&\leq&C_{17}\big(T^{\frac{1}{p}}\|v_{1}-v_{2}\|_{E_{2}^{+}} \|v_{1}\|_{E_{2}^{+}}^{2} +(\|F_{xx}(v_{2}(\cdot),\cdot)- F_{xx}(w,\cdot)\|_{L^{p}(0,T;\mathcal{D}(A^{2}))} \\
&&+ \|F_{xx}(w,\cdot)-F_{xx}(0,\cdot)\|_{L^{p}(0,T;\mathcal{D}(A^{2}))} +\|F_{xx}(0,\cdot)\|_{L^{p}(0,T;Y)} ) \|v_{1}+v_{2}\|_{E_{2}^{+}}\|v_{1}-v_{2}\|_{E_{2}^{+}}\big)\\
&\leq&C_{18}\big(T^{\frac{1}{p}}(\|w\|_{E_{2}^{+}}+r)+T^{\frac{1}{p}}(r+1)+\|F_{xx}(0,\cdot)\|_{L^{p}(0,T;Y)} \big)(\|w\|_{E_{2}^{+}}+r)\|v_{1}-v_{2}\|_{E_{2}^{+}}\\
&\leq&C_{19}\big(T^{\frac{1}{p}}(\|F(0,\cdot)\|_{E_{0}^{+}}+1)+\|F_{xx}(0,\cdot)\|_{L^{p}(0,T;Y)} \big)(\|F(0,\cdot)\|_{E_{0}^{+}}+1)\|v_{1}-v_{2}\|_{E_{2}^{+}},
\end{eqnarray*}
for certain $C_{16},\dots,C_{19}>0$ independent of $T\in (0,T_{0}]$. Treating the other terms on the right-hand side of \eqref{Fxxesty} similarly, by \eqref{2ndtermLip}, for suficiently small $T>0$, we find that 
$$
\|g(v_{1})-g(v_{2})\|_{E_{2}^{+}}\leq \frac{1}{2}\|v_{1}-v_{2}\|_{E_{2}^{+}}, \quad \text{for all} \quad v_{1},v_{2}\in U. 
$$
Hence, by the Banach fixed point theorem, and since $r$ is arbitrary, there exists a $T>0$ and a unique $u\in E_{2}^{+}$ satisfying
$$
(\overline{-iA+B} \, \overline{iA+B}u)(t)=F(u(t),t), \quad t\in [0,T);
$$ 
note that for any $r_{0}>0$, the constants $C_{0},\dots,C_{19}$ can be chosen independent of $r\in (0,r_{0}]$. 

By (i), we have that $u, Au,Bu\in W_{0}^{1,p}(0,T;X)\cap L^{p}(0,T;\mathcal{D}(A))$, which shows that $u$ satisfies \eqref{NLW1}-\eqref{NLW2}. In addition, by \cite[Corollary L.4.6]{HNVW1}, we have
$$
u,u'\in W^{1,p}(0,T;\mathcal{D}(A^{2}))=W^{1,p}(0,T;\mathcal{D}(A^{2}))\cap L^{p}(0,T;\mathcal{D}(A^{2})) \hookrightarrow C([0,T];\mathcal{D}(A^{2})),
$$
so that $u\in C^{1}([0,T];\mathcal{D}(A^{2}))$. Finally, since $F(u(\cdot),\cdot)\in E_{0}^{+}$, by Theorem \ref{AWEWP} the problem
$$
(\overline{-iA+B} \, \overline{iA+B}\eta)(t)=F(u(t),t), \quad t\in (0,T],
$$ 
has a unique solution $\eta\in E_{2}^{+}$, which must coincide with $u$. Then, \eqref{stabilF} follows by Theorem \ref{AWEWP} and Lemma \ref{CTindep}.
\end{proof}

\begin{remark}
{\bf (i)} Let $c>0$, $X$ be a complex Banach space, and let $\Lambda\in \mathcal{RV}_{c}$ in $X$. Then, by Remark \ref{StripvsParabola}, we have that $\Lambda^{\frac{1}{2}}\in \mathcal{RZ}_{c}$. Based on this observation, and on Remark \ref{convprop} (i) and (ii), in Theorem \ref{ThNLW} we can express all the relevant data in terms of the original operator $A^{2}=\Lambda$ instead of $A$. \\
{\bf (ii)} The mixed-derivative regularity that appears in the assumptions of Theorem \ref{ThNLW} can be obtained by Lemma \ref{fracderiv} by requiring $\mathcal{BIP}$ for the operator $A^{2}=\Lambda$; see Remark \ref{convprop}(i).\\
{\bf (iii)} Since we do not have a description of the space $E_{2}^{\pm}$ in terms of vector-valued Sobolev spaces, and we do not know whether the sum $\pm iA+B$ is closed, the assumptions (i) and (ii) in Theorem \ref{ThNLW} are natural and necessary in this situation.\\
{\bf (iv)} Based on Theorem \ref{AWEWP} and following the proof of Theorem \ref{ThNLW}, we can establish an existence, uniqueness, and regularity result for a quasilinear version of \eqref{NLW1}-\eqref{NLW2}. The same holds true for a quasilinear version of \eqref{ACLSE}, based on Theorem \ref{AWEWP} or, even better, on Theorem \ref{BestScrth}. In both cases, we may consider more general forcing terms on the right hand side of the equations; the key point is that certain Fr\'echet derivatives of the quasilinear operators and of the forcing terms must be locally Lipschitz continuous. \\
{\bf (v)} Let $X$ be a Hilbert space and let $A$ be a positive definite self-adjoint linear operator in $X$. Then $A\in \mathcal{P}(0)$, and the fractional powers defined by \eqref{fracpaw} coincide with the fractional powers defined by the spectral theorem for self-adjoint operators; see, e.g., \cite[Theorem III.4.6.7]{Amann1}. In particular, by the spectral theorem for self-adjoint operators, we have that $A\in \mathcal{P}(\theta)\cap \mathcal{BIP}$ for any $\theta\in [0,\pi)$. Recall that, in a Hilbert space, $R$-boundedness is equivalent to uniform boundedness for a family of bounded operators. Thus, by the spectral theorem for self-adjoint operators, we have that $A\in\mathcal{RZ}_{c}$ for any $c>0$. Moreover, the square root $A^{\frac{1}{2}}$ of $A$ is again a positive definite self-adjoint linear operator in $X$, and hence, $A^{\frac{1}{2}}\in\mathcal{RZ}_{c}$ for any $c>0$. Therefore, by Remark \ref{StripvsParabola}, we obtain that $A\in \mathcal{RV}_{c}$ for any $c>0$. \mbox{\ } \hfill $\Diamond$
\end{remark}

\section{An application: The Klein-Gordon equation on $\mathbb{R}$}\label{Sec7}

Let $q\in(1,\infty)$, and consider the derivation operator
$$
H_{q}^{1}(\mathbb{R})=D(A)\ni u\mapsto Au=\partial_{x}u\in L^{q}(\mathbb{R}),
$$
where $x\in\mathbb{R}$. By \cite[Example I.5.4, Proposition II.1 and Corollary II.3.7]{EnNa} or \cite[pp.240]{Haase}, we have that $\pm iA \in \mathcal{S}_{\eta}$ in $ L^{q}(\mathbb{R})$ for any $\eta>0$. Therefore, by Remark \ref{StripvsParabola}, the operator 
$$
H_{q}^{2}(\mathbb{R})=D(\Delta_{0})\ni u\mapsto \Delta_{0} u=\partial_{x}^{2}u\in L^{q}(\mathbb{R})
$$
satisfies 
\begin{equation}\label{DeltaParab}
\Pi_{\eta}\subset \rho(\Delta_{0}) \quad \text{and} \quad \|(z-\Delta_{0})^{-1}\|_{\mathcal{L}(L^{q}(\mathbb{R}))}\leq \frac{C_{0}}{\sqrt{|z|}}, \quad \|A(z-\Delta_{0})^{-1}\|_{\mathcal{L}(L^{q}(\mathbb{R}))}\leq C_{0}, \quad z\in \Pi_{\eta},
\end{equation}
for some $C_{0}>0$. On the other hand, by \cite[Example 3.7.6, Remark 3.7.7(a) and Theorem 3.7.11]{ABHN}, for any $c>0$, we have $c-\Delta_{0} \in \mathcal{P}(\frac{\pi}{2})$. Additionally, by \cite[pp. 232 and Proposition 8.3.4]{Haase}, we also have $c-\Delta_{0} \in \mathcal{BIP}$. Hence, \cite[Theorem 5.6.1]{HNVW} and \cite[Theorem 15.3.9]{HNVW1} imply that
$$
\mathcal{D}((c-\Delta_{0})^{\frac{1}{2}})=[L^{q}(\mathbb{R}), H_{q}^{2}(\mathbb{R})]_{\frac{1}{2}}=H_{q}^{1}(\mathbb{R}).
$$
Thus, for any $z\in \Pi_{\eta}$, by \eqref{DeltaParab}, we have
\begin{eqnarray*}
\lefteqn{\|(c-\Delta_{0})^{\frac{1}{2}}(z+c-\Delta_{0})^{-1}\|_{\mathcal{L}(L^{q}(\mathbb{R}))} } \\
&\leq& \|(c-\Delta_{0})^{\frac{1}{2}}(A+1)^{-1}\|_{\mathcal{L}(L^{q}(\mathbb{R}))} \|(A+1)(z+c-\Delta_{0})^{-1}\|_{\mathcal{L}(L^{q}(\mathbb{R}))}\leq C_{1}
\end{eqnarray*}
for some $C_{1}>0$. The above estimate, together with \eqref{DeltaParab}, implies that $c-\Delta_{0} \in \mathcal{V}_{\eta}$.
 
Now, let $s\geq 0$ and consider the operator
$$
H_{q}^{s+2}(\mathbb{R})=D(\Delta_{s})\ni u\mapsto \Delta_{s} u=\partial_{x}^{2}u\in H_{q}^{s}(\mathbb{R}).
$$
We will show that
\begin{equation}\label{restrrtesolv}
\rho(\Delta_{0}-c)\subseteq \rho(\Delta_{s}-c) \quad \text{and} \quad (\lambda+c-\Delta_{s})^{-1}=(\lambda+c-\Delta_{0})^{-1}|_{H_{q}^{s}(\mathbb{R})} \quad \text{for any} \quad \lambda\in \rho(\Delta_{0}-c).
\end{equation}
To this end, it suffices to show that, for any $ \lambda\in \rho(\Delta_{0}-c)$, the identities 
$$
(\lambda+c-\Delta_{0})^{-1}(\lambda+c-\Delta_{s})=I \quad \text{on} \quad H_{q}^{s+2}(\mathbb{R}) 
$$
and 
$$
(\lambda+c-\Delta_{s})(\lambda+c-\Delta_{0})^{-1}=I \quad \text{on} \quad H_{q}^{s}(\mathbb{R})
$$
hold. The first identity is trivial. For the second identity, assume first that $s\in(0,2]$, and let $v\in H_{q}^{2}(\mathbb{R})$ be such that $(\lambda+c-\Delta_{s})v \in H_{q}^{s}(\mathbb{R})$. Then $\Delta_{s} v \in H_{q}^{s}(\mathbb{R})$, i.e., $v$ lies in the maximal domain of $\Delta_{s}$ in $H_{q}^{s}(\mathbb{R})$, which equals $H_{q}^{s+2}(\mathbb{R})$. Iterating this argument, we obtain the result for any $s\geq0$. 

Let $k\in \mathbb{N}_{0}$, and recall that $\|(c-\Delta_{k})\cdot\|_{H_{q}^{k}(\mathbb{R})}$ is an equivalent norm for $H_{q}^{k+2}(\mathbb{R})$ (see, e.g., \cite[Definition 5.6.2]{HNVW}). For any $\lambda\in \rho(\Delta_{0}-c)$ and $u\in H_{q}^{k+2}(\mathbb{R})$, using \eqref{restrrtesolv} we estimate 
\begin{eqnarray}\nonumber
\|(\lambda+c-\Delta_{k+2})^{-1}u\|_{H_{q}^{k+2}(\mathbb{R})} &=& \|(\lambda+c-\Delta_{k})^{-1}u\|_{H_{q}^{k+2}(\mathbb{R})} \\\nonumber
&\leq& C_{2}\|(c-\Delta_{k})(\lambda+c-\Delta_{k})^{-1}u\|_{H_{q}^{k}(\mathbb{R})} \\\nonumber
&=& C_{2}\|(\lambda+c-\Delta_{k})^{-1}(c-\Delta_{k})u\|_{H_{q}^{k}(\mathbb{R})} \\\nonumber
&\leq& C_{2}\|(\lambda+c-\Delta_{k})^{-1}\|_{\mathcal{L}(H_{q}^{k}(\mathbb{R}))} \|(c-\Delta_{k})u\|_{H_{q}^{k}(\mathbb{R})} \\\label{iterest}
&\leq& C_{3}\|(\lambda+c-\Delta_{k})^{-1}\|_{\mathcal{L}(H_{q}^{k}(\mathbb{R}))} \|u\|_{H_{q}^{k+2}(\mathbb{R})} 
\end{eqnarray}
for some $C_{2},C_{3}>0$. Therefore, by iteration, we obtain that $c-\Delta_{2\rho}\in \mathcal{P}(\frac{\pi}{2})$ for any $\rho\in \mathbb{N}_{0}$. Then, by \eqref{restrrtesolv}, and complex interpolation (see, e.g., \cite[Theorem 5.6.9]{HNVW} and \cite[Theorem 2.6]{Lunardi}), we also have $c-\Delta_{s}\in \mathcal{P}(\frac{\pi}{2})$ for any $s\geq0$. As a consequence, by \eqref{fracpaw} and \eqref{restrrtesolv}, we deduce that 
\begin{equation}\label{restrpowers}
(c-\Delta_{s})^{z}=(c-\Delta_{0})^{z}|_{H_{q}^{s}(\mathbb{R})} \quad \text{for any} \quad z\in \mathbb{C}. 
\end{equation}
Then, by \cite[Lemma III.4.7.4]{Amann1}, \eqref{restrpowers}, an estimate similar to \eqref{iterest}, iteration, and complex interpolation, we obtain $c-\Delta_{s}\in \mathcal{BIP}$ for any $s\geq0$. Similarly, \eqref{restrpowers}, an estimate similar to \eqref{iterest}, iteration, and complex interpolation show that $c-\Delta_{s}\in \mathcal{V}_{\eta}$ for any $s\geq0$.

By \cite[Theorem III.4.5.2]{Amann1} and \cite[Proposition 4.2.15]{HNVW}, each of the spaces $H_{q}^{s}(\mathbb{R})$, $s\geq0$, is a UMD space. Furthermore, $\mathcal{D}((c-\Delta_{s})^{2})=H_{q}^{s+4}(\mathbb{R})$,
$$
[H_{q}^{s}(\mathbb{R}), H_{q}^{s+4}(\mathbb{R})]_{\frac{2+\ell}{4}}=H_{q}^{s+2+\ell}(\mathbb{R}), \quad \text{and} \quad [H_{q}^{s}(\mathbb{R}), H_{q}^{s+2}(\mathbb{R})]_{\frac{\ell}{2}}=H_{q}^{s+\ell}(\mathbb{R}) 
$$
for all $\ell\in (0,1)$ and $s\geq0$, due to \cite[Theorem 5.6.9]{HNVW}. By Theorem \ref{invA2B2} and Remark \ref{convprop}, we summarize the results as follows.

\begin{corollary}[Klein-Gordon equation on $\mathbb{R}$]
Let $T,c>0$, $p,q\in(1,\infty)$, $s\geq0$, and let $f\in W_{0}^{2+\nu,p}(0,T;H_{q}^{s+\ell}(\mathbb{R})) \cap W_{0}^{\nu,p}(0,T;H_{q}^{s+2+\ell}(\mathbb{R}))$ for some $\nu,\ell\in(0,1]$ such that $\nu+\ell>1$. Then, there exists a unique $u\in W_{0}^{2,p}(0,T;H_{q}^{s}(\mathbb{R})) \cap L^{p}(0,T;H_{q}^{s+2}(\mathbb{R}))$ satisfying
$$
u_{tt}(t,x)-u_{xx}(t,x)+cu(t,x)=f(t,x), \quad (t,x)\in (0,T)\times \mathbb{R}, \quad u(0,x)=u_{t}(0,x)=0, \quad x\in \mathbb{R}.
$$
Moreover, the solution $u$ depends continuously on $f$.
\end{corollary}


\begin{thebibliography}{99}

\bibitem{Amann1} H. Amann. {\em Linear and Quasilinear Parabolic Problems: Volume I. Abstract Linear Theory}. Monographs in Mathematics {\bf 89}, Birkh\"auser Verlag (1995).

\bibitem{Amann2} H. Amann. {\em Linear and quasilinear parabolic problems, Vol. II. Function spaces}. Monographs in Mathematics {\bf 106}, Birkh\"auser Verlag (2019).

\bibitem{ABHN} W. Arendt, C. J. K. Batty, M. Hieber, F. Neubrander. {\em Vector-valued Laplace transforms and Cauchy problems}. Monographs in Mathematics {\bf 96}, Birkh\"auser Verlag (2001).

\bibitem{BMV} C. Batty, J. Mubeen, I. V\"or\"os. {\em Bounded $H^{\infty}$-calculus for strip-type operators}. Integr. Equ. Oper. Theory {\bf 72}, 159--178 (2012).

\bibitem{CL} P. Cl\'ement, S. Li. {\em Abstract parabolic quasilinear equations and application to a groundwater flow problem}. Adv. Math. Sci. Appl. {\bf 3}, 17--32 (1993/94).

\bibitem{DG} G. Da Prato, P. Grisvard. {\em Sommes d' op\'erateurs lin\'eaires et \'equations diff\'erentielles op\'erationnelles}. J. Math. Pures Appl. (9) {\bf 54}, no. 3, 305--387 (1975).

\bibitem{EnNa} K. J. Engel, R. Nagel. {\em One-parameter semigroups for linear evolution equations}. Graduate Texts in Mathematics {\bf 194}, Springer Verlag (1999).

\bibitem{Fatt} H. Fattorini. {\em Ordinary differential equations in linear topological spaces, I}. J. Differential Equations {\bf 5}, no. 1, 72--105 (1969).

\bibitem{Haase1} M. Haase. {\em A functional calculus description of real interpolation spaces for sectorial operators}. Studia Mathematica {\bf 171}, no 2, 177--195 (2005).

\bibitem{Haase2} M. Haase. {\em Spectral properties of operator logarithms}. Math. Z. {\bf 245}, 761--779 (2003).

\bibitem{Haase3} M. Haase. {\em The functional calculus approach to cosine operator functions}. Recent Trends in Analysis. Proceedings of the Conference in honour of N. K. Nikolski held in Bordeaux 2011, Theta Foundation, 123--147 (2013).

\bibitem{Haase} M. Haase. {\em The functional calculus for sectorial operators}. Operator theory: Advances and applications {\bf 169}, Birkh\"auser Verlag (2006).

\bibitem{HNVW} T. Hyt\"onen, J. Neerven, M. Veraar, L. Weis. {\em Analysis in Banach spaces. Vol. I: Martingales and Littlewood-Paley theory}. Ergebnisse der Mathematik und ihrer Grenzgebiete. 3. Folge / A Series of Modern Surveys in Mathematics {\bf 63}, Springer Verlag (2016).

\bibitem{HNVW1} T. Hyt\"onen, J. Neerven, M. Veraar, L. Weis. {\em Analysis in Banach spaces. Vol. III: Harmonic analysis and spectral theory}. Ergebnisse der Mathematik und ihrer Grenzgebiete. 3. Folge / A Series
of Modern Surveys in Mathematics {\bf 76}, Springer Verlag (2023).

\bibitem{KuWe} P. C. Kunstmann, L. Weis. {\em Maximal $L_{p}$-regularity for parabolic equations, Fourier multiplier theorems and $H^{\infty}$-functional calculus}. Functional Analytic Methods for Evolution Equations, Lecture Notes in Mathematics {\bf 1855}, 65--311, Springer Verlag (2004).

\bibitem{Lunardi} A. Lunardi. {\em Interpolation theory}. Lecture Notes Scuola Normale Superiore {\bf 16}, Edizioni della Normale (2018).

\bibitem{MeSc} M. Meyries, R. Schnaubelt. {\em Interpolation, embeddings and traces of anisotropic fractional Sobolev spaces with temporal weights}. J. Funct. Anal. {\bf 262}, no.3, 1200--1229 (2012).

\bibitem{PS1} J. Pr\"uss, G. Simonett. {\em Moving interfaces and quasilinear parabolic evolution equations}. Monographs in Mathematics {\bf 105}, Birkh\"auser Verlag (2016).

\bibitem{Roi1} N. Roidos. {\em On the inverse of the sum of two sectorial operators}. J. Funct. Anal. {\bf 265}, no. 2, 208--222 (2013). 

\bibitem{RS3} N. Roidos, E. Schrohe. {\em The Cahn-Hilliard equation and the Allen-Cahn equation on manifolds with conical singularities}. Comm. Partial Differential Equations {\bf 38}, no. 5, 925--943 (2013).

\bibitem{VaPi} V. Vasil'ev, S. Piskarev. {\em Diﬀerential equations in Banach spaces II. Theory of cosine operator functions}. J. Math. Sci. {\bf 122}, no 2, 3055--3174 (2004). 

\bibitem{Weis} L. Weis. {\em Operator-valued Fourier multiplier theorems and maximal $L_{p}$-regularity}. Math. Ann. {\bf 319}, no. 4, 735--758 (2001).

\end{thebibliography}
\end{document}